\numberwithin{equation}{section}
\numberwithin{figure}{section}
\numberwithin{equation}{section}
\numberwithin{figure}{section}
\numberwithin{equation}{section}
\newtheorem{theorem}{Theorem}
\newtheorem{lemma}[theorem]{Lemma}
\newtheorem{corollary}[theorem]{Corollary}
\newtheorem{proposition}[theorem]{Proposition}
\theoremstyle{definition}
\newtheorem{definition}[theorem]{Definition}
\newtheorem*{acknowledgements*}{Acknowledgements}
\theoremstyle{remark}
\newtheorem{remark}[theorem]{Remark}
\numberwithin{theorem}{section}   
\thanks{The work of the first author was completed as a part of the implementation of the development program of the Scientific and Educational Mathematical Center Volga Federal District, agreement no. 075-02-2021-1393.}
\thanks{}
\thanks{}
\subjclass[2010]{Primary 46B70; Secondary 46E30, 46M35, 46A45}
\keywords{interpolation space, $\ell^p$-spaces, quasi-Banach space, quasi-norm group, ${\mathcal K}$-functional, 
Calder\'{o}n-Mityagin property, orbit}
\begin{document}
\title[Arazy-Cwikel and Calderón-Mityagin type properties ]{Arazy-Cwikel and
Calderón-Mityagin type properties of the couples $(\ell^{p},\ell^{q})$, $
0\le p<q\le\infty$}
\author{Sergey V. Astashkin}
\address{Astashkin:Department of Mathematics, Samara National Research
University, Moskovskoye shosse 34, 443086, Samara, Russia}
\email{astash56@mail.ru}
\author{Michael Cwikel}
\address{Cwikel: Department of Mathematics, Technion - Israel Institute of
Technology, Haifa 32000, Israel }
\email{mcwikel@math.technion.ac.il}
\author{Per G. Nilsson}
\address{Nilsson: Roslagsgatan 6, 113 55 Stockholm, Sweden}
\email{pgn@plntx.com}
\date{\today }

\begin{abstract}
We establish Arazy-Cwikel type properties for the family of couples $
(\ell^{p},\ell^{q})$, $0\le p<q\le\infty$, and show that $(\ell^{p},\ell^{q})
$ is a Calder\'{o}n-Mityagin couple if and only if $q\ge1$. Moreover, we
identify interpolation orbits of elements with respect to this couple for
all $p$ and $q$ such that $0\le p<q\le\infty$ and obtain a simple positive
solution of a Levitina-Sukochev-Zanin problem, clarifying its connections
with whether $(\ell^{p},\ell^{q})$ has the Calder\'{o}n-Mityagin property or
not.
\end{abstract}

\maketitle

\section{Introduction}

\label{Intro}

Nowadays, the interpolation theory of operators is rather completely
presented in several excellent books; see, for example, Bergh and L{ö}fstr{ö}
m \cite{BL76}, Bennett and Sharpley \cite{BSh}, Brudnyi and Kruglyak \cite
{BK81}, Krein, Petunin and Semenov \cite{KPS82}, Triebel \cite{Triebel}. In
these monographs the reader can find not only a systematic treatment of
problems within the theory itself, but also valuable applications of
interpolation methods and results to various other fields of mathematics.
Let us also mention several books which contain applications of
interpolation theory to a variety of fields: in \cite{Graf09,HNVW17} there are applications to harmonic analysis, in \cite
{LT79-I,LT79-II,wojtaszczyk} to Banach space theory, and in \cite{KS,Ast20}
to classical systems in $L^{p}$ spaces and in other rearrangement invariant
spaces. Furthermore, the papers \cite{LSZ17,LSZ17a} (and the references
therein) include applications to noncommutative analysis, and, finally, the
survey \cite{KSM03} contains a very attractive account of the interaction
between interpolation theory and the geometry of Banach spaces. Of course,
the above list is far from being complete.

\vskip0.2cm

One of the reasons for there being such fruitful applicability of
interpolation theory is that, for many couples $(X_{0},X_{1})$, we can
effectively describe the class $Int(X_{0},X_{1})$ of all interpolation
spaces. In most of the known cases of couples $\left(X_{0},X_{1}\right)$ for
which this is possible, this description is formulated by using the Peetre $\mathcal{K}$-functional, which plays an important role in the theory. For
those couples the terminology \textit{Calder\'{o}n couple} or \textit{Calder\'{o}n-Mityagin couple} is often used. This is because the first example of such
a couple was obtained by Calder\'{o}n \cite{CAL} and Mityagin \cite{Mit}. They
proved independently that a Banach function space $X$ on an arbitrary
underlying measure space is an interpolation space with respect to the
couple $\left(L^{1},L^{\infty}\right)$ on that measure space if and only if
the following monotonicity property\footnote{
Here we are using Calder\'{o}n's terminology. Mityagin formulates this result
somewhat differently.} holds: if $f\in X$, $g\in L^{1}+L^{\infty}$ and 
\begin{equation*}
\int\nolimits _{0}^{t}g^{\ast}\left(s\right)\,ds\leq\int\nolimits
_{0}^{t}f^{\ast}\left(s\right)\,ds,\;\;t>0 
\end{equation*}
(where $h^{*}$ denotes the nonincreasing left-continuous rearrangement of $
|h|$), then $g\in X$ and $\left\Vert g\right\Vert _{X}\leq\left\Vert
f\right\Vert _{X}.$ Peetre \cite {PeetreJ1963Nouv,PeetreJ1968brasilia} had proved (cf.~also a similar result due
independently to Oklander \cite{OklanderE1964}, and cf. also \cite[
pp.~158--159]{KreeP1968}) that the functional $\int\nolimits
_{0}^{t}f^{\ast}\left(s\right)\,ds$ is in fact the $\mathcal{K}$-functional
of the function $f\in L^{1}+L^{\infty}$ for the couple $\left(L^{1},L^{
\infty}\right)$. So the results of \cite{CAL,Mit} naturally suggested the
form that analogous results for couples other than $\left(L^{1},L^{\infty}
\right)$ might take, expressed in terms of the $\mathcal{K}$-functional for
those couples. This led many mathematicians to search for such analogous
results. Let us mention at least some of the many results of this kind which
were obtained: Lorentz-Shimogaki\footnote{
This paper also describes the interpolation spaces for the couple $
(L^{1},L^{p})$ but not in explicit terms of the $\mathcal{K}$-functional for
that couple.} \cite{LSh} ($\left(L^{p},L^{\infty}\right),1<p<\infty$),
Sedaev-Semenov \cite{SeSe71} (couples of weighted $L^{1}$-spaces), Dmitriev 
\cite{Dmi75} (relative interpolation of couples $
\left(L^{1}(w_{0}),L^{1}(w_{1})\right)$ and $\left(L^{1},L^{\infty}\right)$
), Peetre \cite{PeetreJ1971x} (relative interpolation of an \textit{arbitrary} Banach couple with a couple of weighted $L^{\infty}$-spaces) (see also 
\cite[p.~589, Theorem 4.4.16]{BK91} or \cite[p.~29, Theorem 4.1]
{CwikelMPeetreJ1981} for this result), Sparr \cite{SP78} (couples of
weighted $L^{p}$-spaces, $1\le p\leq\infty$), and Kalton \cite{Kal92}
(couples of rearrangement invariant spaces).

\vskip0.2cm

It is worth to note, that for an arbitrary Banach couple, the uniform $
\mathcal{K}$-monotone interpolation spaces, which are closely related to the
Calder\'{o}n-Mityagin property (see Definition \ref{K-monotonedef} and Remark 
\ref{rem:CMcoupleAndKmontoneSpaces} below) can also be described in a more
concrete way. This important fact is due to Brudnyi and Kruglyak \cite
{BK81,BK91} and follows from their proof (see \cite[pp.~503-504]{BK91}) of a
conjecture due to S. G. Krein \cite{DmKrOv77}. One of its consequences is
that, for every Banach couple $(X_{0},X_{1})$ with the Calder\'{o}n-Mityagin
property, the family $Int(X_{0},X_{1})$ of all its interpolation spaces
can be parameterized by the set of so-called $\mathcal{K}$\textit{-method
parameters}.

\vskip0.2cm

Moreover, using the Calder\'{o}n-Mityagin property\footnote{
In fact the arguments used in \cite{Arazy-Cwikel} also yield a shorter and
simpler proof of this property, at least for couples of exponents $p$ in the
range $1\le p\le\infty$.} of couples of $L^{p}$-spaces in the range $1\le
p\le\infty$, Arazy and Cwikel proved, in \cite{Arazy-Cwikel}, that for all $
1\leq p<q\leq\infty$ and for each underlying measure space 
\begin{equation}
Int\left(L^{p},L^{q}\right)=Int\left(L^{1},L^{q}\right)\cap
Int\left(L^{p},L^{\infty}\right).  \label{AC-statement-1}
\end{equation}
Later on, Bykov and Ovchinnikov obtained a similar result for families of
interpolation spaces, corresponding to weighted couples of shift-invariant
ideal sequence spaces \cite{BO06}.

\vskip0.2cm

On the negative side, Ovchinnikov and Dmitriev \cite{OvcDm75} showed that
the couple $(\ell ^{1}(L^{1}),\ell ^{1}(L^{\infty }))$ of vector-valued
sequences is not a Calder\'{o}n-Mityagin couple. Neither is the couple $\left(
L^{p},W^{1,p}\right) $ when $p\in (2,\infty )$. (See \cite[p.~218]
{CwikelM1976Temp}.) Later on, Ovchinnikov \cite{Ovc82} (see also \cite
{MalOvc}) proved the same result for the couple $(L^{1}+L^{\infty
},L^{1}\cap L^{\infty })$ on $(0,\infty )$. One can find more examples of
couples of rearrangement invariant spaces of this kind in Kalton's work \cite
{Kal92}. Many of these results contain a description of interpolation
orbits, which cannot be obtained by the real $\mathcal{K}$-method (see e.g. 
\cite{Ovc76}, \cite{Ovc84}, \cite{Ovc05}, \cite{Dmi81}, \cite{DMI92}). As shown
by Theorem 2.2 of \cite[pp.~36--37]{CwikelMNilssonP2003}, if $X_{0}$\ and $
X_{1}$\ are both $\sigma $-order continuous Banach lattices of measurable
functions with the Fatou property on the same underlying $\sigma $-finite
measure space $\Omega $\ and if at least one of these spaces does not
coincide to within equivalence of norm with some weighted $L^{p}$\ space on $
\Omega $. then there exist weight functions $w_{0}$\ and $w_{1}$\ on $\Omega 
$\ for which the couple of weighted lattices $\left(
X_{0,w_{0}},X_{1,w_{1}}\right) $\ is not a Calderón-Mityagin couple.

We refer to the article \cite{CwikelM-Encyclopedia-1} for additional details
about Calder\'{o}n-Mityagin couples.

\vskip0.2cm

All of the results listed above were obtained for couples of \textit{Banach}
{} spaces. But there were also some ventures beyond Banach couples. In \cite
{SP78} Sparr was in fact also able to treat couples of weighted $L^{p}$
spaces for $p\in(0,\infty)$ under suitable hypotheses, and then Cwikel \cite
{Cwikel1} considered the couple $\left(\ell^{p},\ell^{\infty}\right)$ also
for $p$ in this extended range. New questions have recently arisen (see, for
instance, \cite{LSZ17a}, \cite{CN17}) that require analogous results for
more general situations, say, for quasi-Banach couples or even for couples
of quasi-normed Abelian groups. The extension of the basic concepts and
constructions of interpolation theory to the latter setting was initiated
long ago by Peetre and Sparr in \cite{PS}.

\vskip0.2cm

We recall that $\ell ^{0}$ is the linear space (sometimes considered merely
as an Abelian group) of all eventually zero sequences $x=(x_{k})_{k=1}^{
\infty }$, equipped with the \textquotedblleft norm\textquotedblright 
\footnote{Although $\Vert \cdot \Vert _{\ell ^{0}}$ does not satisfy the condition $\Vert \lambda f\Vert _{\ell ^{0}}$=$\left\vert \lambda \right\vert \Vert f\Vert _{\ell^{0}}$ for scalars $\lambda $, it is a $\left( 1,\infty \right) $-norm or $\infty $-norm on the Abelian group $\ell ^{0}$ in the terminology of \cite[p.~219]{PS}.}\textbf{\ }
$\Vert x\Vert _{\ell ^{0}}:=\mathrm{card}(\mathrm{
supp}\,x)$, where $\mathrm{supp}\,x$ is the support of $x$. This space is an
analogue of the space or normed Abelian group $L^{0}$, {\ which consists of
all measurable functions on $(0,\infty )$ with supports of finite measure,
equipped with the quasi-norm $\Vert f\Vert _{L^{0}}:=m\{t>0:\,f(t)\neq 0\}$ (
$m$ is the Lebesgue measure) and of the space of operators $\mathfrak{S}_{0}\left(A,B\right) $ introduced on p. 249 and p. 256 respectively of \cite{PS}.}
Comparing some simple calculations with $L^{0}$ in \cite{PS} with some
quantities appearing implicitly in \cite{LSZ17,LSZ17a} and \cite{CN17} can
lead one to understand that $\ell ^{0}$ can play a useful role in studying
interpolation properties of $\ell ^{p}$ spaces for $p>0$. Note also that
independently $\ell ^{0}$ appeared explicitly in \cite{Ast-20}, where a
description of orbits of elements in the couple $(\ell ^{0},\ell ^{1})$ is given.

\vskip0.2cm

The two main aims of this paper are, first of all, to completely determine
for which values of $p$ and $q$ in the range $0\leq p<q\leq \infty $, the
couple $\left( \ell ^{p},\ell ^{q}\right) $ has the Calderón-Mityagin
property and then, secondly, to extend a property analogous to the
Arazy-Cwikel property \eqref{AC-statement-1} to the couples $(\ell ^{p},\ell
^{q})$, with $p$ and $q$ in the enlarged range $0\leq p<q\leq \infty $ and
with the role of $L^{1}$ in \eqref{AC-statement-1} now played by $\ell ^{0}$.

\vskip0.2cm

Note that there are close connections between the present paper and the paper \cite{CSZ}. Although \cite{CSZ} mainly considers the couples $\left(L^{p},L^{q}\right) $ of function spaces on $(0,\infty )$, it also deals with interpolation properties of the analogous sequence space couples $\left(\ell ^{p},\ell ^{q}\right) $ for the range $0\leq p<q\leq \infty $.
However, in contrast to our paper, the authors of \cite{CSZ} restrict
themselves to studying the Calderón-Mityagin case,\textbf{\ } i.e., for
values $q\geq 1$. It seems that some of the results in \cite{CSZ} for this
case could be used to establish some of our results, and vice versa. We
shall comment more explicitly about connections with \cite{CSZ} at
appropriate places in our text, however we have kept our approach almost
self-contained.

\vskip0.2cm

The couple $(\ell ^{0},\ell ^{\infty })$ has some advantages over the
corresponding Banach couple $(\ell ^{1},\ell ^{\infty })$. In particular, as
remarked in \cite{CSZ}, it is well-known that there exist symmetric Banach
sequence spaces, which are \textit{not} interpolation spaces with respect to
the latter couple (see e.g. \cite[Example 2.a.11, p.~128]{LT79-II}. In
contrast to that, every symmetric quasi-Banach sequence space $E$ {\ is an interpolation space with respect to the couple $\left( \ell ^{0},\ell^{\infty }\right)$ 
(this can be obtained by obvious modifications of reasoning in the papers \cite{HM-90} and \cite{Ast-94}, where the analogous property is proved for the couple $\left( L^{0},L^{\infty }\right)$ on $(0,\infty)$ and rearrangement invariant quasi-Banach function spaces).} 

\vskip0.2cm

Some other partial results for the couples $(\ell ^{p},\ell ^{q})$, in the
non-Banach case, were obtained more recently in \cite
{CN17,LSZ17a,CSZ,Ast-20,Cad}. Moreover, in \cite{CSZ}, the above
Arazy-Cwikel property has been proved for the couple $(L^{0},L^{\infty })$
of measurable functions on the semi-axis $(0,\infty )$ with the Lebesgue
measure. 
Observe however that there are differences in the properties of the
quasi-Banach spaces $\ell ^{p}$ and $L^{p}$ that are essential in our
context; for instance, if $p\in (0,1)$, then $(\ell ^{p})^{\ast }=\ell ^{1}$
while $(L^{p})^{\ast }=\{0\}$ (see Section \ref{Prel-sequence-spaces}).

\vskip0.2cm

In general, the above-mentioned Brudnyi-Kruglyak result about a description of all interpolation spaces with respect to Calder\'{o}n-Mityagin couples of Banach spaces cannot be extended
to the class of quasi-Banach couples. Nevertheless, whenever $p$ and $q$ are
such that the couple $\left(\ell^{p},\ell^{q}\right)$ is a 
Calder\'{o}n-Mityagin couple (including in the non-Banach case), then every
interpolation space with respect to $\left(\ell^{p},\ell^{q}\right)$ can be
described by using the real $\mathcal{K}$-method of interpolation. Moreover, but discussion of this is deferred to a forthcoming paper \cite{AN-21}, a similar result holds for a rather wide subclass of quasi-Banach couples (the latter paper will also deal with some other related problems).

\vskip0.2cm

Let us describe now the main results of the paper in more detail. In Section
2, we give preliminaries with basic definitions and results. So, we address some versions of the Holmstedt inequality and give descriptions of
the ${\mathcal{K}}$- and ${\mathcal{E}}$-functionals for couples of $\ell^{p}
$-spaces. Section 3 contains some auxiliary (apparently well-known) results,
in particular, an extrapolation theorem for operators bounded on $\ell^{p}$, 
$0<p<1.$

\vskip0.2cm

The central result of the next section is Theorem \ref{int spaces}, which
extends the above mentioned Arazy-Cwikel theorem \eqref{AC-statement-1} to
the sequence space setting, showing that 
\begin{equation}
Int\left( \ell ^{p},\ell ^{q}\right) =Int\left( \ell ^{s},\ell ^{q}\right)
\cap Int\left( \ell ^{p},\ell ^{r}\right)   \label{AC-statement-2}
\end{equation}
for all $0\leq s<p<q<r\leq \infty $. It is worth noting that 
\eqref{AC-statement-2} holds, in particular, in the range $0\leq s<p<q<r<1$,
i.e., when all the couples involved in \eqref{AC-statement-2} fail to have
the Calder\'{o}n-Mityagin property. This fact indicates that Arazy-Cwikel type
properties of couples do not imply that they are Calder\'{o}n-Mityagin ones.
Observe that a closely related result, {\ under the additional restriction $
q\geq 1$,} has been proved in \cite[Theorem~5.6]{CSZ}.

\vskip0.2cm

The main ingredient in the proof of relations \eqref{AC-statement-2} is Theorem \ref{p3}, which has also other interesting consequences. The first of them,
Corollary \ref{Ar-Cw}, states that the condition $q\geq1$ ensures that $
\left(\ell^{p},\ell^{q}\right)$ is a uniform Calder\'{o}n-Mityagin couple. The
second, Corollary \ref{Corr-1}, presents a complete description of
interpolation orbits of elements of the space $\ell^{q}$ with respect to the
couple $\left(\ell^{p},\ell^{q}\right)$ both in the cases $q\geq1$ and $q<1$
. Let us mention also the result of Theorem \ref{int properties}, which is a
consequence of Theorem \ref{int spaces} combined with a self-improvement property of interpolation between the spaces $\ell^{0}$ and $\ell^{q}$.
Theorem \ref{int properties} states that, if $E\in
Int\left(\ell^{0},\ell^{q}\right)$, then there exists $p>0$ such that $E\in
Int\left(\ell^{p},\ell^{q}\right)$. Hence, interpolation of quasi-Banach
spaces with respect to the couple $\left(\ell^{0},\ell^{q}\right)$ can be
reduced, in fact, to that with respect to the couples $\left(\ell^{p},
\ell^{q}\right)$ with $p>0$. This phenomenon allows us to obtain rather simply, in the case $q\geq1$, the positive answer to the Levitina-Sukochev-Zanin conjecture, which was posed in \cite{LSZ17a} and resolved in \cite{CSZ} (its earlier version in majorization terms may be found in the preprint \cite{LSZ17}).
Moreover, we reveal its connections with the Calder\'{o}n-Mityagin property of
the couple $(\ell^{p},\ell^{q})$, showing that the answer to the latter
conjecture is negative if $0<q<1$.

\vskip0.2cm

In Section 5 we prove that $\left(\ell^{p},\ell^{q}\right)$ is not a Calder\'{o}n-Mityagin couple whenever $0\leq p<q<1$ (see Theorem \ref
{Th:No-CM-Property}). In fact, we obtain a stronger result, which reads that for every $
g\in\ell^{q}\setminus\ell^{p}$ there exists $f\in\ell^{q}$ satisfying the
condition 
\begin{equation*}
{\mathcal{K}}\left(t,g;\ell^{p},\ell^{q}\right)\leq{\mathcal{K}}
\left(t,f;\ell^{p},\ell^{q}\right),\;\;t>0, 
\end{equation*}
but $g\neq Tf$ for every linear operator $T$ bounded in $\ell^{p}$ and $\ell^{q}$. Combining Theorem \ref{Th:No-CM-Property} with Corollary \ref{Ar-Cw}, we conclude that $\left(\ell^{p},\ell^{q}\right)$ is a uniform
Calder\'{o}n-Mityagin couple if and only if $q\geq1$.

\vskip0.2cm

Considering the above-mentioned Levitina-Sukochev-Zanin conjecture, Cwikel
and Nilsson have introduced, in \cite{CN17}, the so-called $S_{q}$-property
expressed in terms of a majorization inequality. In Section 6 we show that
for every $q\ge1$ a quasi-Banach sequence space $E$ has the $S_{q}$-property
if and only if $E\in Int(\ell^{0},\ell^{q})$ (see Theorem \ref{S_q-prop} and
Corollary \ref{S_q-prop1}).

\vskip0.2cm

In the concluding Section 7 we prove that the couple $\left(\ell^{p},
\ell^{q}\right)$, with $0\leq p<q<1$, does not have the uniform 
Calder\'{o}n-Mityagin property (see Theorem \ref{Th:No-Bounded-CM-Property}). Clearly,
the latter result is weaker than Theorem \ref{Th:No-CM-Property} of Section
5. However, for the reader's convenience, we provide its independent proof,
which is much shorter and simpler than that of Theorem \ref
{Th:No-CM-Property}.

\vskip0.2cm

We dedicate this paper to the memory of Professor Jaak Peetre (1935 -- 2019). His profound ideas and research have played an essential role in many
mathematical topics, including those in this paper. The second and third
authors have been fortunate to have him as a very respected and close
friend, mentor and colleague for many years.

\section{Preliminaries}

\subsection{Interpolation of operators and the Calder\'{o}n-Mityagin property.}

\label{Prel-Interpolation}

Let us recall some basic constructions and definitions related to the
interpolation theory of operators. For more detailed information we refer to 
\cite{BL76,BK91,BSh,KPS82,Ovc84}.

\smallskip{}  \vskip0.2cm

In this paper we are mainly concerned with interpolation within the class of
quasi-Banach sequence spaces while linear bounded operators are considered
as the corresponding morphisms. All linear spaces considered will be over
the reals. But it should be possible to readily extend much of the theory
that we develop also to the case of complex linear spaces.

\vskip0.2cm

A pair $\vec{X}=(X_{0},X_{1})$ of quasi-Banach spaces is called a \textit{
quasi-Banach couple} if $X_{0}$ and $X_{1}$ are both linearly and
continuously embedded in some Hausdorff linear topological space. In
particular, every pair of arbitrary quasi-Banach sequence lattices $E_{0}$
and $E_{1}$ forms a quasi-Banach couple, because convergence in a
quasi-Banach sequence lattice implies coordinate-wise convergence.

\vskip0.2cm

For each quasi-Banach couple $(X_{0},X_{1})$ we define the \textit{
intersection} $X_{0}\cap X_{1}$ and the \textit{sum} $X_{0}+X_{1}$ as the
quasi-Banach spaces equipped with the quasi-norms 
\begin{equation*}
\Vert x\Vert _{X_{0}\cap X_{1}}:=\max \left\{ \Vert x\Vert
_{X_{0}}\,,\,\Vert x\Vert _{X_{1}}\right\} 
\end{equation*}
and 
\begin{equation*}
\Vert x\Vert _{X_{0}+X_{1}}:=\inf \left\{ \Vert x_{0}\Vert
_{X_{0}}\,+\,\Vert x_{1}\Vert _{X_{1}}:\,x=x_{0}+x_{1},x_{i}\in
X_{i},i=0,1\right\} ,
\end{equation*}
respectively. A linear space $X$ is called \textit{intermediate} with
respect to a quasi-Banach couple $\vec{X}=(X_{0},X_{1})$ (or is said to be 
\textit{between} $X_{0}$ and $X_{1}$) if it is a quasi-Banach space and
satisfies $X_{0}\cap X_{1}\subset X\subset X_{0}+X_{1}$ where both of these
inclusions are continuous.

\vskip0.2cm

If $\vec{X}=(X_{0},X_{1})$ and $\vec{Y}=(Y_{0},Y_{1})$ are quasi-Banach
couples, then we let $\mathfrak{L}({\vec{X}},{\vec{Y}})$ denote the space of
all linear operators $T:\,X_{0}+X_{1}\rightarrow Y_{0}+Y_{1}$ that are
bounded from $X_{i}$ in $Y_{i}$, $i=0,1$, equipped with the quasi-norm 
\begin{equation}
{\Vert T\Vert }_{\mathfrak{L}({\vec{X}},{\vec{Y}})}:=\max\limits_{i=0,1}{
\Vert T\Vert }_{X_{i}\rightarrow Y_{i}}.  \label{eq:QuasiNormLXY}
\end{equation}
In the case when $X_{i}=Y_{i}$, $i=0,1$, we simply write $\mathfrak{L}({\vec{
X}})$ or $\mathfrak{L}(X_{0},X_{1})$.

\vskip0.2cm

Let $\vec{X}=(X_{0},X_{1})$ be a quasi-Banach couple and let $X$ be an
intermediate space between $X_{0}$ and $X_{1}$. Then, $X$ is called an 
\textit{interpolation space} with respect to the couple $\vec{X}$ (or
between $X_{0}$ and $X_{1}$) if every operator $T{\in }{\mathfrak{L}}({\vec{X
}})$ is bounded on $X$. In this case, we write: $X\in Int(X_{0},X_{1})$.

\vskip0.2cm

Recall that, by the Aoki-Rolewicz theorem (see e.g. \cite[Lemma~3.10.1]{BL76}
), every quasi-Banach space is a $F$-space (i.e., the topology in that space
is generated by a complete invariant metric). In particular, this applies to
the space ${\mathfrak{L}}({\vec{X}})$ which is obviously a quasi-Banach
space with respect to the quasi-norm $T\mapsto \max \left\{ {\Vert T\Vert }
_{X_{0}\rightarrow X_{0}},{\Vert T\Vert }_{X_{1}\rightarrow X_{1}}\right\} $ (cf.~\eqref{eq:QuasiNormLXY}), and also with respect to the larger quasi-norm $T\mapsto \max \left\{ {\Vert T\Vert }_{X_{0}\rightarrow X_{0}},{
\Vert T\Vert }_{X_{1}\rightarrow X_{1}},{\Vert T\Vert }_{X\rightarrow
X}\right\} $ whenever the quasi-Banach space $X$ is an interpolation space with respect to the quasi-Banach couple $\vec{X}=(X_0,X_1)$. As is well known (see e.g. \cite[Theorem~2.2.15]
{Rud}), the Closed Graph Theorem and the equivalent Bounded Inverse Theorem
(see e.g. \cite[Corollary 2.2.12]{Rud}) hold for $F$-spaces. Therefore, by using exactly the same reasoning as required for the Banach case (see Theorem 2.4.2 of \cite[p.~28]{BL76}), we have the following: if $X$ is an interpolation quasi-Banach space with respect to a quasi-Banach couple ${\vec{X}}=(X_{0},X_{1})$, then there exists a constant $C>0$ such that ${\Vert T\Vert }_{X\rightarrow X}\leq C{\Vert T\Vert }_{{\mathfrak{L}}({\vec{X}})}$ for every $T\in {\mathfrak{L}}({\vec{X}})$. The least constant $C$, satisfying the last inequality for all such $T$, is called the \textit{interpolation constant} of $X$ with
respect to the couple $\vec{X}$.

\vskip0.2cm

One of the most important ways of constructing interpolation spaces is based
on use of the \textit{Peetre ${\mathcal{K}}$-functional}, which is defined
for an arbitrary quasi-Banach couple $(X_{0},X_{1})$, for every $x\in
X_{0}+X_{1}$ and each $t>0$ as follows: 
\begin{equation*}
{\mathcal{K}}(t,x;X_{0},X_{1}):=\inf
\{||x_{0}||_{X_{0}}+t||x_{1}||_{X_{1}}:\,x=x_{0}+x_{1},x_{i}\in {X_{i}}\}.
\end{equation*}
For each fixed $x\in X_{0}+X_{1}$ one can easily show that the function $
t\mapsto {\mathcal{K}}(t,x;X_{0},X_{1})$ is continuous, non-decreasing,
concave and non-negative on $\left( 0,\infty \right) $ \cite[Lemma~3.1.1]
{BL76}. On the other hand, for each fixed $t>0$, the functional $x\mapsto {
\mathcal{K}}(t,x;X_{0},X_{1})$ is an equivalent quasi-norm on $X_{0}+X_{1}$.

\vskip0.2cm

As already discussed at some length in the introduction, for quite a large
class of (quasi-)Banach couples, the ${\mathcal{K}}$-functional can be used
to describe \textit{all} interpolation (quasi-)Banach spaces with respect to
those couples. We first need the following definition:

\begin{definition}
\label{K-monotonedef} Let $X$ be an intermediate space with respect to a
quasi-Banach couple $\vec{X}=\left(X_{0},X_{1}\right)$. Then, $X$ is said to
be a \textit{${\mathcal{K}}$-monotone space} with respect to this couple if
whenever elements $x\in X$ and $y\in X_{0}+X_{1}$ satisfy 
\begin{equation*}
{\mathcal{K}}\left(t,y;X_{0},X_{1}\right)\leq{\mathcal{K}}
\left(t,x;X_{0},X_{1}\right),\;\text{for all\thinspace}\;t>0, 
\end{equation*}
it follows that $y\in X$. If it also follows that $\left\Vert y\right\Vert
_{X}\leq C\left\Vert x\right\Vert _{X}$, for a constant $C$ which does not
depend on $x$ and $y$, then we say that $X$ is a \textit{uniform ${\mathcal{K
}}$-monotone} space with respect to the couple $\vec{X}$. The infimum of all
constants $C$ with this property is referred as the \textit{${\mathcal{K}}$
-monotonicity constant} of $X$. Clearly, each ${\mathcal{K}}$-monotone space
with respect to the couple $\vec{X}$ is an interpolation space between $X_{0}
$ and $X_{1}$.
\end{definition}

Note that every ${\mathcal{K}}$-monotone Banach space with respect to a couple of Banach lattices is also a uniform ${\mathcal{K}}$-monotone  space with respect to this couple \cite[Theorem 6.1]{CwikelMNilssonP2003}. 

\begin{definition}
\label{def:Orbit} Let $\vec{X}=\left(X_{0},X_{1}\right)$ and $\vec{Y}
=\left(Y_{0},Y_{1}\right)$ be two quasi-Banach couples and let $x\in
X_{0}+X_{1}$, $x\neq0$. The \textit{orbit} ${\mathrm{Orb}}
_{\left(X_{0},X_{1}\right)}(x;Y_{0},Y_{1})$ of $x$ with respect to the class
of operators $\mathfrak{L}(\vec{X},\vec{Y})$ is the linear space 
\begin{equation*}
\left\{ Tx:\,T\in\mathfrak{L}(\vec{X},\vec{Y})\right\} . 
\end{equation*}
This space may be equipped with the quasi-norm defined by 
\begin{equation*}
\left\Vert y\right\Vert _{\mathrm{Orb}\left(x\right)}:=\inf\left\{
\left\Vert T\right\Vert _{\mathfrak{L}(\vec{X},\vec{Y})}:\;y=Tx,T\in
\mathfrak{L}(\vec{X},\vec{Y})\right\} . 
\end{equation*}

In the case when $\left( X_{0},X_{1}\right) =\left( Y_{0},Y_{1}\right) $ we
will use the shortened notation ${\mathrm{Orb}}\left( x;X_{0},X_{1}\right) $.
\end{definition}

\vskip0.2cm

\bigskip {} Since any orbit $Orb\left( x;X_{0},X_{1}\right) $ can be
regarded as a quotient of the quasi-Banach space $\mathfrak{L}( 
\overrightarrow{X})$, it is a quasi-Banach space. If for every nonzero 
$x\in X_{0}+X_{1}$ there exists a linear functional $x^{\ast }\in \left(
X_{0}+X_{1}\right) ^{\ast }$ with $\left\langle x,x^{\ast }\right\rangle
\neq 0$ then $X_{0}\cap X_{1}$ is contained in $Orb\left(
x;X_{0},X_{1}\right) $ continuously (see e.g. \cite[Section 1.6, p. 368]
{Ovc84}). It is easy to see that then, moreover, each orbit $Orb\left(
x;X_{0},X_{1}\right) $ is an interpolation space between $X_{0}$ and $X_{1}$.

\vskip0.2cm

A similar concept may be defined by using the ${\mathcal{K}}$-functional.

\begin{definition}
\label{def:K-orbit} Let $\vec{X}=\left(X_{0},X_{1}\right)$ and $\vec{Y}
=\left(Y_{0},Y_{1}\right)$ be two quasi-Banach couples. The $\mathit{{
\mathcal{K}}-orbit}$ of an element $x\in X_{0}+X_{1}$, $x\neq0$, which we
denote by ${{\mathcal{K}}-{\mathrm{Orb}}}_{\left(X_{0},X_{1}\right)}
\left(x;Y_{0},Y_{1}\right)$ is the space of all $y\in Y_{0}+Y_{1}$ such that
the following quasi-norm 
\begin{equation*}
\left\Vert y\right\Vert _{{{\mathcal{K}}-{\mathrm{Orb}}}(x)}:=\sup_{t>0}
\frac{{\mathcal{K}}\left(t,y;Y_{0},Y_{1}\right)}{{\mathcal{K}}
\left(t,x;X_{0},X_{1}\right)} 
\end{equation*}
is finite. If $\left(X_{0},X_{1}\right)=\left(Y_{0},Y_{1}\right)$, then we
simplify the above notation to ${{\mathcal{K}}-{\mathrm{Orb}}}
\left(x;X_{0},X_{1}\right)$.
\end{definition}

{One can easily check that each $\mathcal{K}$-orbit of an element $x\in
X_{0}+X_{1}$, $x\neq 0$, is an interpolation quasi-normed space between $
X_{0}$ and $X_{1}$.}

\vskip0.2cm

It is obvious that for every quasi-Banach couple $\left(X_{0},X_{1}\right)$ and each $x\in X_{0}+X_{1}$ we have 
\begin{equation*}
{\mathrm{Orb}}(x;X_{0},X_{1}){\subset}{\mathcal{K}}-{\mathrm{Orb}}(x;X_{0},X_{1})
\end{equation*}
with constant $1$.

\begin{definition}
\label{def:CalderonMityaginCouple} A quasi-Banach couple $\vec{X}=\left(
X_{0},X_{1}\right) $ is said to be a \textit{Calder\'{o}n-Mityagin couple} (or
to have the \textit{Calder\'{o}n-Mityagin property}) if for each $x\in
X_{0}+X_{1}$ 
\begin{equation*}
{{\mathcal{K}}-{\mathrm{Orb}}}(x;X_{0},X_{1}){\subset }{\mathrm{Orb}}
(x;X_{0},X_{1}),  
\end{equation*}
i.e., if for every $y\in {{\mathcal{K}}-{\mathrm{Orb}}}(x;X_{0},X_{1})$
there exists an operator $T\in \mathfrak{L}(\vec{X})$ such that $y=Tx$. If
additionally we can choose $T\in \mathfrak{L}(\vec{X})$ so that $\Vert
T\Vert _{\mathfrak{L}(\vec{X})}\leq C\left\Vert y\right\Vert _{{{\mathcal{K}}
-{\mathrm{Orb}}}(x)}$, where $C$ is independent of $x$ and $y$, then $\vec{X}
$ is called a \textit{uniform Calder\'{o}n-Mityagin couple} (or we say that $
\vec{X}$ has the \textit{uniform Calder\'{o}n-Mityagin property}).
\end{definition}

The name of the last property is justified by the fact that historically the
first result in this direction was a theorem which describes all
interpolation spaces with respect to the Banach couple $(L^{1},L^{\infty}),$
proved independently by Calder\'{o}n \cite{CAL} and Mityagin \cite{Mit}. In
our terminology, this result is equivalent to the assertion that $
(L^{1},L^{\infty})$ is a uniform Calder\'{o}n-Mityagin couple.

\begin{remark}
\label{rem:CMcoupleAndKmontoneSpaces}

The condition that $\left(X_{0},X_{1}\right)$ is a Calder\'{o}n-Mityagin
couple, obviously implies that every interpolation space with respect to $
\left(X_{0},X_{1}\right)$ is also a $\mathcal{K}$-monotone space.
Furthermore, if $\left(X_{0},X_{1}\right)$ is a uniform Calder\'{o}n-Mityagin
couple, this clearly implies that every interpolation space $X$ with  interpolation constant $C_{1}$ is a uniform $\mathcal{K}$-monotone space with $\mathcal{K}$-monotonicity constant not exceeding $CC_{1}$, where $C$ is the constant appearing in Definition \ref{def:CalderonMityaginCouple}.
\end{remark}

\subsection{Some quasi-Banach sequence spaces and quasi-normed groups}

\label{Prel-sequence-spaces}

As was said above, we will consider, mainly, quasi-Banach spaces which
consist of sequences $x=\left(x_{k}\right)_{k=1}^{\infty}$ of real numbers
with the linear coordinate-wise operations. When $0<p<\infty$, we, as usual,
let $\ell^{p}$ denote the linear space of all sequences for which the
quasi-norm 
\begin{equation*}
\Vert x\Vert_{\ell^{p}}:=\Big(\sum_{k=1}^{\infty}|x_{k}|^{p}\Big)^{1/p} 
\end{equation*}
is finite, and $\ell^{\infty}$ denotes the linear space of all bounded
sequences with the usual norm 
\begin{equation*}
\Vert x\Vert_{\ell^{\infty}}:=\sup_{k=1,2,\dots}|x_{k}|. 
\end{equation*}
For every $0<p<\infty $ and all $x,y\in \ell ^{p}$ we have 
\begin{equation*}
\Vert x+y\Vert _{\ell ^{p}}\leq \max (1,2^{(1-p)/p})\left( \Vert x\Vert
_{\ell ^{p}}+\Vert y\Vert _{\ell ^{p}}\right) 
\end{equation*}
(see e.g. \cite[Lemma~3.10.3]{BL76}).

\vskip0.2cm

Obviously, $\left( \ell ^{p},\ell ^{q}\right) $ is a quasi-Banach couple for all $p,q$ with $0<p,q\leq \infty .$ But we also need to deal with a limiting case of such couples.

\vskip0.2cm

Consideration of the limit of $\Vert
x\Vert_{\ell^{p}}^{p}=\sum_{k=1}^{\infty}|x_{k}|^{p}$ as $p$ tends to $0$
provides the motivation for defining $\ell^{0}$ to be the set of all
sequences $x=(x_{k})_{k=1}^{\infty}$ that are eventually zero, i.e., those
that satisfy 
\begin{equation}
\Vert x\Vert_{\ell^{0}}:=\mathrm{card}(\mathrm{supp}\,x)<\infty,
\label{eq:QuasiNormEllZero}
\end{equation}
where $\mathrm{supp}\,x:=\{k\in\mathbb{N}:\,x_{k}\neq0\}$. Observe that $
\ell^{0}$ is a linear space with respect to the usual coordinate-wise
operations and hence we can consider linear operators defined on $\ell^{0}$.
However, in contrast to the case of $\ell^{p}$ for every $p>0$, $\ell^{0}$
is not a quasi-Banach space, but rather a \textit{quasi-normed group} as
defined in \cite{PS} (see also \cite[\S\,3.10]{BL76}). The functional $\Vert\cdot\Vert_{\ell^{0}}$, although
it is sub-additive, does not have the homogeneity property required for a quasi-norm of a linear space. Indeed, $\ell^{0}$ is an Abelian group of
sequences, where the group operation is coordinate-wise addition.

\begin{remark}
\label{rem:PeetreSparrTerminology}According to the terminology introduced by
Peetre and Sparr in Definitions 1.1 and 2.2 of \cite[p.~219 and pp.~224-225]{PS}, $\ell^{0},$ when equipped with the functional $\left(\text{\ref{eq:QuasiNormEllZero}}\right)$, is an example of a quasi-normed group, and,
more specifically. it is a $(1,1)$-normed Abelian group, and also a $
\left(1,1\mid0\right)$-normed vector space.
\end{remark}


\vskip0.2cm

The extension of the basic concepts and constructions of the interpolation
theory to the class of quasi-normed Abelian groups was initiated by Peetre
and Sparr in the above mentioned paper \cite{PS} (see also \cite[§\,3.11]
{BL76} and \cite{BK91}). In this case the role of morphisms is played,
instead of bounded linear operators, by bounded homomorphisms. Recall that a
mapping $T:\,X\rightarrow X$ on a group $X$ is called a \textit{homomorphism}
on $X$ if $T(x+y)=Tx+Ty$ for all \thinspace \thinspace $x,y\in X$. As in 
\cite[Definition 1.2, p.~223]{PS}, a homomorphism $T$ on $X$ is called 
\textit{bounded} if 
\begin{equation*}
\Vert T\Vert _{X\rightarrow X}:=\sup\limits_{x\neq 0}{\frac{\Vert Tx\Vert }{
\Vert x\Vert }}<\infty .
\end{equation*}

\vskip0.2cm

Note that $\ell ^{0}$ is complete and is linearly and continuously embedded
into the quasi-Banach space $\ell ^{q}$ for every $0<q\leq \infty $ (the
functional $\Vert x\Vert _{\ell ^{0}}$ generates the discrete topology on $
\ell ^{0}$).

\vskip0.2cm

{We shall adopt the following conventions related to homomorphisms which are
bounded on the couple $(\ell ^{0},\ell^{q})$.}

\begin{definition}
\label{def:Int_ell_0_ell_q} $\left(i\right):$ For each $q$ with $
0<q\leq\infty$ we let $\mathfrak{L}\left(\ell^{0},\ell^{q}\right)$ denote
the set of all bounded linear operators on $\ell^{q}$ whose restrictions to $
\ell^{0}$ are bounded homomorphisms.

$\left(ii\right):$ We let $Int\left(\ell^{0},\ell^{q}\right)$ denote the
class of all quasi-normed Abelian groups $E$ which satisfy the continuous
inclusions $\ell^{0}\subset E\subset\ell^{q}$ and which are also
quasi-Banach spaces with respect to their given group quasi-norms and for
which $T:E\rightarrow E$ is bounded for each $T\in\mathfrak{L}
(\ell^{0},\ell^{q})$.
\end{definition}

$\mathfrak{L}\left( \ell ^{0},\ell ^{q}\right) $ is obviously a linear space
and therefore also an Abelian group. Analogously to the usage for couples of
quasi-Banach spaces we define 
\begin{equation*}
\Vert T\Vert _{\mathfrak{L}(\ell ^{0},\ell ^{q})}:=\max (\Vert T\Vert _{\ell
^{0}\rightarrow \ell ^{0}},\Vert T\Vert _{\ell ^{q}\rightarrow \ell ^{q}})
\end{equation*}
for every $T$ in the set $\mathfrak{L}\left( \ell ^{0},\ell ^{q}\right) .$
Then $T\mapsto \Vert T\Vert _{\mathfrak{L}(\ell ^{0},\ell ^{q})}$ is a group
quasi-norm on this set.

\vskip0.2cm

As shown in Remark 2.2 in \cite{CSZ}, using the proof of Theorem 2.1 of that
paper, if $E\in Int\left( \ell ^{0},\ell ^{q}\right) $, then there is a
constant $C$ such that $\Vert T\Vert _{E\rightarrow E}\leq C$ for every $
T\in \mathfrak{L}(\ell ^{0},\ell ^{q})$ with $\Vert T\Vert _{\mathfrak{L}
(\ell ^{0},\ell ^{q})}\leq 1$.

\vskip0.2cm

We adopt a variant of Definition \ref{def:Orbit} and define the orbit of an
element $x\in \ell ^{q}$ with respect to the couple $(\ell ^{0},\ell ^{q})$
to be the linear space $\mathrm{Orb}(x;\ell ^{0},\ell ^{q})$ of all $y\in
\ell ^{q}$, representable in the form $y=Tx$, where $T$ is a bounded linear
operator in $\ell ^{q}$ and is a bounded homomorphism in $\ell ^{0}$. We can
consider this space as a quasi-normed Abelian group by endowing it with the
group quasi-norm 
\begin{equation*}
\Vert y\Vert _{\mathrm{Orb}\left( x\right) }:=\inf \Vert T\Vert _{\mathfrak{L
}(\ell ^{0},\ell ^{q})},
\end{equation*}
where the infimum is taken over all $T\in \mathfrak{L}(\ell ^{0},\ell ^{q})$
such that $y=Tx$.

\vskip0.2cm

Given any $q\in (0,\infty ]$, suppose that $x=(x_{n})_{n=1}^{\infty }$ is an
arbitrary non-zero element of $\ell ^{0}+\ell ^{q}=\ell ^{q}$ so that $
x_{k}\neq 0$ for at least one $k\in \mathbb{N}$. For that $k$ let $x^{\ast }$
be the obviously continuous linear functional on $\ell ^{q}=\ell ^{0}+\ell
^{q}$ defined by $\left\langle y,x^{\ast }\right\rangle =y_{k}$ for each
element $y=\left( y_{n}\right) _{n-1}^{\infty }\in \ell ^{q}$. Since $
\left\langle x,x^{\ast }\right\rangle \neq 0$, we can reason in the same way
as in \cite[§\,1.6, p.~368]{Ovc84} (see also Section~\ref{Prel-Interpolation}
), and show that $\mathrm{Orb}(x;\ell ^{0},\ell ^{q})$ is an interpolation
quasi-normed group between $\ell ^{0}$ and $\ell ^{q}$.

\vskip0.2cm

Note that an inspection of the proofs related to a description of orbits of
elements in the couples $(\ell ^{0},\ell ^{q})$, $0<q\leq \infty $, in the
papers \cite{Ast-94} and \cite{Ast-20} shows that these are completely
consistent with the above definitions. This fact will allow us to further
apply the results of these papers.

\vskip0.2cm

The space $\ell^{p}$, with $0<p<\infty$ (resp. $\ell^{0}$) is an example of
a symmetric quasi-Banach space (resp. group). Recall that a quasi-Banach sequence space (or group) $E$ is said to be a \textit{quasi-Banach sequence lattice} if from $|y_{k}|\leq|x_{k}|$, $k=1,2,\dots$, and $
(x_{k})\in E$ it follows that $(y_{k})\in E$ and $\Vert(y_{k})\Vert_{E}\leq
\Vert(x_{k})\Vert_{E}$. If additionally $E\subset\ell^{\infty}$ and the
conditions $y_{k}^{\ast}=x_{k}^{\ast}$, $k=1,2,\dots$, $(x_{k})\in E$ imply
that $(y_{k})\in E$ and $\Vert(y_{k})\Vert_{E}=\Vert(x_{k})\Vert_{E}$, then $
E$ is called \textit{symmetric}. If $(u_{k})_{k=1}^{\infty}$ is any bounded
sequence then, in what follows, $(u_{k}^{\ast})_{k=1}^{\infty}$ denotes the
nonincreasing permutation of the sequence $(|u_{k}|)_{k=1}^{\infty}$ defined
by 
\begin{equation*}
u_{k}^{\ast}:=\inf_{\mathrm{card}\,A=k-1}\sup_{i\in\mathbb{N}\setminus
A}|u_{i}|,\;\;k\in\mathbb{N}. 
\end{equation*}

A quasi-Banach sequence lattice $E$ has the \textit{Fatou property} if from $x_{n}\in E$, $n=1,2,\dots$, $\sup_{n=1,2,\dots}\Vert x_{n}\Vert_{E}<\infty$
and $x_{n}\rightarrow x$ coordinate-wise as $n\rightarrow\infty$ it follows
that $x\in E$ and $\Vert x\Vert_{E}\leq\liminf_{n\rightarrow\infty}\Vert
x_{n}\Vert_{E}$.

Recall that for all $0\leq p<r<q\leq\infty$ the space $\ell^{r}$ is an
interpolation space between $\ell^{p}$ and $\ell^{q}$ (see e.g. \cite[
Theorem~7.2.2 and Corollary~7.2.3]{BL76}). Moreover, $\ell^{r}$ may be
obtained by applying the classical real $K$-method to the couple $
(\ell^{p},\ell^{q})$ \cite[Theorem~7.1.7]{BL76}.

\vskip 0.5cm

\subsection{The Holmstedt formula and related ${\mathcal{K}}$-functionals.}

\label{Holmstedt}

Further, we repeatedly use the following well-known result due to Holmstedt 
\cite{Holm-70}, which is referred usually as the Holmstedt formula.

Let $0<p<q<\infty$. Then, there exists a positive constant $C_{p,q}$,
depending only on $p$ and $q$, such that for every $f\in L^{p}+L^{q}$ on an
arbitrary underlying measure space it holds 
\begin{eqnarray}
{\mathcal{K}}\left(t,f;L^{p},L^{q}\right) & \le & \left(\int\nolimits
_{0}^{t^{\alpha}}\left(f^{\ast}\left(s\right)\right)^{p}\,ds\right)^{1/p}+t
\left(\int\nolimits
_{t^{\alpha}}^{\infty}\left(f^{\ast}\left(s\right)\right)^{q}\,ds
\right)^{1/q}  \notag \\
& \le & C_{p,q}{\mathcal{K}}\left(t,f;L^{p},L^{q}\right),\;\;t>0,
\label{Holmstedt1}
\end{eqnarray}
where $f^{\ast}$ is the nonincreasing left-continuous rearrangement of the
function $|f|$ and $\alpha$ is given by the formula $1/\alpha=1/p-1/q$.
Similarly, in the case when $q=\infty$ we have 
\begin{equation}
{\mathcal{K}}\left(t,f;L^{p},L^{\infty}\right)\le\left(\int\nolimits
_{0}^{t^{p}}\left(f^{\ast}\left(s\right)\right)^{p}\,ds\right)^{1/p}\le
C_{p,\infty}{\mathcal{K}}\left(t,f;L^{p},L^{\infty}\right),\;\;t>0.
\label{Holmstedt1a}
\end{equation}

If the underlying measure space is the set of positive integers equipped
with the counting measure, the couple $(L^{p},L^{q})$ can be naturally
identified with the couple $(\ell^{p},\ell^{q})$ and so, setting $\widetilde{
f}:=\sum_{n=1}^{\infty}f_{n}\chi_{[n-1,n)}$ for every sequence $
(f_{n})_{n=1}^{\infty}$, we have 
\begin{equation}
{\mathcal{K}}\left(t,(f_{n});\ell^{p},\ell^{q}\right)={\mathcal{K}}\left(t,
\widetilde{f};L^{p}\left(0,\infty\right),L^{q}\left(0,\infty\right)\right),
\;\;t>0.  \label{k-functional-lp-Lp}
\end{equation}
Therefore, since $(\widetilde{f})^{\ast}=\widetilde{f^{*}}$, from 
\eqref{Holmstedt1} and \eqref{Holmstedt1a} it follows that 
\begin{eqnarray}
{\mathcal{K}}\left(t,(f_{n});\ell^{p},\ell^{q}\right) & \le &
\left(\int\nolimits _{0}^{t^{\alpha}}\left(\widetilde{f}^{\ast}\left(s
\right)\right)^{p}\,ds\right)^{1/p}+t\left(\int\nolimits
_{t^{\alpha}}^{\infty}\left(\widetilde{f}^{\ast}\left(s\right)\right)^{q}
\,ds\right)^{1/q}  \notag \\
& \le & C_{p,q}{\mathcal{K}}\left(t,(f_{n});\ell^{p},\ell^{q}\right),\;\;t>0,
\label{Holmstedt2}
\end{eqnarray}
and 
\begin{equation}
{\mathcal{K}}\left(t,(f_{n});\ell^{p},\ell^{\infty}\right)\le\left(\int
\nolimits _{0}^{t^{p}}\left(\widetilde{f}^{\ast}\left(s\right)\right)^{p}
\,ds\right)^{1/p}\le C_{p,\infty}{\mathcal{K}}\left(t,(f_{n});\ell^{p},
\ell^{\infty}\right),\;\;t>0.  \label{Holmstedt2a}
\end{equation}

Let us define now, for every $0<p<q<\infty$, the operators $P_{p}$ and $Q_{q}
$ as follows: if $f\in L^{p}\left(0,\infty\right)+L^{q}\left(0,\infty\right)$
, then 
\begin{eqnarray*}
P_{p}f\left(t\right) & := & \left(\int\nolimits
_{0}^{t}\left(f^{\ast}\left(s\right)\right)^{p}\,ds\right)^{1/p},\;\;t>0, \\
Q_{q}f\left(t\right) & := & \left(\int\nolimits
_{t}^{\infty}\left(f^{\ast}\left(s\right)\right)^{q}\,ds\right)^{1/q},\;
\;t>0.
\end{eqnarray*}
By these notations, inequalities \eqref{Holmstedt2} and \eqref{Holmstedt2a}
can be rewritten as follows: 
\begin{equation}
{\mathcal{K}}\left(t,(f_{n});\ell^{p},\ell^{q}\right)\leq P_{p}\widetilde{f}
\left(t^{\alpha}\right)+tQ_{q}\widetilde{f}\left(t^{\alpha}\right)\leq
C_{p,q}{\mathcal{K}}\left(t,(f_{n});\ell^{p},\ell^{q}\right),\;\;t>0,
\label{Holmsteds_formula3}
\end{equation}
and 
\begin{equation}
{\mathcal{K}}\left(t,(f_{n});\ell^{p},\ell^{\infty}\right)\leq P_{p}
\widetilde{f}\left(t^{p}\right)\leq C_{p,\infty}{\mathcal{K}}
\left(t,(f_{n});\ell^{p},\ell^{\infty}\right),\;\;t>0.
\label{Holmsteds_formula3a}
\end{equation}

In the sequence case, we define the operators $\mathcal{P}_{p}$ and $
\mathcal{Q}_{q}$ by setting for every $x=(x_{k})_{k=1}^{\infty}$ 
\begin{eqnarray*}
\mathcal{P}_{p}x=((\mathcal{P}_{p}x)_{n}),\;\;(\mathcal{P}_{p}x)_{n} & := &
\left(\sum_{k=1}^{n}\left(x_{k}^{\ast}\right)^{p}\right)^{1/p},\;\;n\in
\mathbb{N}, \\
\mathcal{Q}_{q}x=((\mathcal{Q}_{q}x)_{n}),\;\;(\mathcal{Q}_{q}x)_{n} & := &
\left(\sum_{k=n}^{\infty}\left(x_{k}^{\ast}\right)^{q}\right)^{1/q},\;\;n\in
\mathbb{N}.
\end{eqnarray*}

Clearly, for all $x=(x_{k})_{k=1}^{\infty }\in \ell ^{q}$ and $n\in \mathbb{N
}$ we have 
\begin{equation*}
(\mathcal{P}_{p}x)_{n}=P_{p}\widetilde{x}(n)\;\;\mbox{and}\;\;(\mathcal{Q}
_{q}x)_{n}=Q_{q}\widetilde{x}(n).
\end{equation*}
Consequently, inequalities \eqref{Holmsteds_formula3} and 
\eqref{Holmsteds_formula3a} imply for all $n\in \mathbb{N}$
\begin{equation}
{\mathcal{K}}\left( n^{1/\alpha },x;\ell ^{p},\ell ^{q}\right) \leq (
\mathcal{P}_{p}x)_{n}+n^{1/\alpha }(\mathcal{Q}_{q}x)_{n}\leq C_{p,q}{
\mathcal{K}}\left( n^{1/\alpha },x;\ell ^{p},\ell ^{q}\right)   \label{Holmsteds_formula4}
\end{equation}
and 
\begin{equation}
{\mathcal{K}}\left( n^{1/p},x;\ell ^{p},\ell ^{\infty }\right) \leq (
\mathcal{P}_{p}x)_{n}\leq C_{p,\infty }{\mathcal{K}}\left( n^{1/p},x;\ell
^{p},\ell ^{\infty }\right).
\label{Holmsteds_formula4a}
\end{equation}

Let $(X_{0},X_{1})$ be a compatible pair of quasi-normed groups $(X_{0},X_{1})$. We introduce the approximation ${\mathcal{E}}$
-functional by 
\begin{equation*}
{\mathcal{E}}(t,x;X_{0},X_{1}):=\inf \{\Vert x-x_{0}\Vert
_{X_{1}}:\,x_{0}\in X_{0},\Vert x_{0}\Vert _{X_{0}}\leq t\},\quad x\in
X_{0}+X_{1},\ t>0
\end{equation*}
(cf. \cite[Chapter~7]{BL76}). Clearly, the mapping $t\mapsto {\mathcal{E}}
(t,x;X_{0},X_{1})$ is a decreasing function on $(0,\infty )$. 
There is the following connection between the ${\mathcal{E}}-$ and ${
\mathcal{K}}-$functionals (see \cite[§\,7.1]{BL76}): 
\begin{equation}
{\mathcal{K}}(t,x;X_{0},X_{1})=\inf_{s>0}\left(s+t{\mathcal{E}}
(s,x;X_{0},X_{1})\right),\;\;t>0.  \label{EQ13d}
\end{equation}
On the other hand, it is known (see e.g. \cite[Lemma~7.1.3]{BL76}
) that for arbitrary $x\in X_{0}+X_{1}$ we have 
\begin{equation*}
\sup_{s>0}s^{-1}({\mathcal{K}}(s,x;X_{0},X_{1})-t)={\mathcal{E}}
^{*}(t,x;X_{0},X_{1}),\;\;t>0, 
\end{equation*}
where ${\mathcal{E}}^{*}(t,x;X_{0},X_{1})$ is the greatest convex minorant of ${\mathcal{E}}(t,x;X_{0},X_{1})$, and also that for each $\gamma\in(0,1)$ 
\begin{equation*}
{\mathcal{E}}^{*}(t,x;X_{0},X_{1})\le{\mathcal{E}}(t,x;X_{0},X_{1})\le(1-
\gamma)^{-1}{\mathcal{E}}^{*}(\gamma t,x;X_{0},X_{1}),\;\;t>0. 
\end{equation*}
Assuming now that ${\mathcal{K}}(t,y;X_{0},X_{1})\leq C{\mathcal{K}}(t,x;X_{0},X_{1})$ for all $t>0$ 
and applying the last inequalities for $\gamma =1/2$, we get 
\begin{eqnarray*}
{\mathcal{E}}(2t,y;X_{0},X_{1}) &\leq &2{\mathcal{E}}^{\ast
}(t,y;X_{0},X_{1})=2\sup_{s>0}s^{-1}({\mathcal{K}}(s,y;X_{0},X_{1})-t) \\
&\leq &2C\sup_{s>0}s^{-1}({\mathcal{K}}(s,x;X_{0},X_{1})-t/C)=2C{\mathcal{E}}
^{\ast }(t/C,x;X_{0},X_{1}) \\
&\leq &2C{\mathcal{E}}(t/C,x;X_{0},X_{1}),\;\;t>0.
\end{eqnarray*}
As a result, by \eqref{EQ13d} and the latter inequality, we
arrive at the following useful implications: 
\begin{equation}
{\mathcal{E}}(t,y)\leq {\mathcal{E}}(t,x),\;t>0\;\;\Longrightarrow {\mathcal{
K}}(t,y)\leq {\mathcal{K}}(t,x),\;t>0,  \label{impl1}
\end{equation}
and for every $C>0$ 
\begin{equation}
{\mathcal{K}}(t,y)\leq C{\mathcal{K}}(t,x),\;t>0\;\;\Longrightarrow {
\mathcal{E}}(t,y)\leq 2C{\mathcal{E}}(t/(2C),x),\;t>0,  \label{impl2}
\end{equation}
where ${\mathcal{E}}(t,z):={\mathcal{E}}(t,z;X_{0},X_{1})$ and ${\mathcal{K}}
(t,z):={\mathcal{K}}(t,z;X_{0},X_{1})$, $z\in X_{0}+X_{1}$. 

Further, we will apply the above implications to the couples $(\ell^{0},\ell^{q})$, $0<q\le\infty$, and $(L^{0},L^{\infty})$ of (equivalence classes of) measurable
functions on the semi-axis $(0,\infty)$ with the Lebesgue measure $m$. Here, 
$L^{0}=L^{0}(0,\infty)$ is the group (with respect to the usual addition) of
all measurable functions on $(0,\infty)$ with supports of finite measure,
equipped by the quasi-norm 
\begin{equation*}
\Vert f\Vert_{L^{0}}:=m\{t>0:\,f(t)\neq0\}. 
\end{equation*}
One can easily check that, for any $x=(x_{k})_{k=1}^{\infty}\in\ell^{q}$ and all $t\geq0$, we have 
\begin{equation}
{\mathcal{E}}(t,x;\ell^{0},\ell^{q})=\left\{ 
\begin{array}{c}
(\mathcal{Q}x)_{[t]+1}=\Big(\sum_{k=[t]+1}^{\infty}(x_{k}^{\ast})^{q}\Big)
^{1/q}\;\;\mbox{if}\;\;q<\infty \\ 
x_{[t]+1}^{\ast}\;\;\mbox{if}\;\;q=\infty,
\end{array}
\right.  \label{EQ13ddd}
\end{equation}
while for every $f\in L^{0}+L^{\infty}$ and all $t>0$ 
\begin{equation}
{\mathcal{E}}(t,f;L^{0},L^{\infty})=f^{\ast}(t).  \label{eq1b}
\end{equation}

We will use the standard (quasi-)Banach space notation (see e.g. \cite
{LT79-I} and \cite{LT79-II}). In particular, throughout the paper, by $e_{n}$
, $n\in\mathbb{N}$, we denote the vectors of the standard basis in sequence
spaces, and for every sequences $x=\left(x_{n}\right)_{n=1}^{\infty}$, $
y=\left(y_{n}\right)_{n=1}^{\infty}$ we set 
\begin{equation*}
\left\langle x,y\right\rangle :=\sum_{n=1}^{\infty}x_{n}y_{n}\;\;(\mbox{if
the series converges}). 
\end{equation*}
By $[t]$ we denote the integer part of a number $t\in\mathbb{R}$ and by $
\chi_{A}$ the characteristic function of a set $A\subset\mathbb{R}$. 
In what follows, $C$, $c$ etc. denote constants whose value may change from
line to line or even within lines.

\vskip0.2cm

\section{Auxiliary results}

In this section we provide a self-contained presentation of some simple and
apparently well-known facts.

\subsection{An extension theorem for operators bounded on $\ell^{p}$-spaces, 
$0<p<1$}

The purpose of this subsection is to give a detailed account related to an
extension (or extrapolation) theorem for linear operators bounded on $
\ell^{p}$, where $0<p<1.$ This is one of the manifestations of the general
principle, which allows to extend linear mappings bounded on quasi-Banach
spaces to their Banach linear hulls (see, for instance, \cite{KPR84} and
also \cite{Pee74}, where a connection of these constructions with the
interpolation theory is clarified).

\begin{theorem}
\label{Th-bounded-lp} Let $S:\,\ell^{q}\rightarrow\ell^{q}$ be a linear map,
where $0\le q<1.$ Then for every $r$, with $q<r\leq1$, there is a linear
extension $R:\ell^{r}\rightarrow\ell^{r}$ of $S$ such that $\left\Vert
R\right\Vert _{\ell^{r}\to\ell^{r}}\leq\left\Vert S\right\Vert
_{\ell^{q}\to\ell^{q}}$.
\end{theorem}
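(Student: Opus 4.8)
The plan is to reduce everything to the action of $S$ on the unit vectors and then to exploit two facts valid in the range $q<r\le 1$: the inclusion $\ell^{q}\hookrightarrow\ell^{r}$ is norm-decreasing, i.e.\ $\|z\|_{\ell^{r}}\le\|z\|_{\ell^{q}}$ for $z\in\ell^{q}$, and the functional $\|\cdot\|_{\ell^{r}}^{\,r}$ is subadditive, since $|a+b|^{r}\le|a|^{r}+|b|^{r}$ when $0<r\le1$. Write $M:=\|S\|_{\ell^{q}\to\ell^{q}}$; if $M=\infty$ there is nothing to prove, so assume $M<\infty$, and put $y^{(n)}:=Se_{n}$. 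Since $\|e_{n}\|_{\ell^{q}}=1$ we get $\|y^{(n)}\|_{\ell^{q}}\le M$, and hence $\|y^{(n)}\|_{\ell^{r}}\le M$ for every $n$.

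The key step I would then prove is the estimate $\|Sw\|_{\ell^{r}}\le M\|w\|_{\ell^{r}}$ for every finitely supported scalar sequence $w=(w_{n})$. Writing $w=\sum_{n}w_{n}e_{n}$ as a finite sum and combining the linearity of $S$ with the coordinatewise subadditivity of $t\mapsto t^{r}$,
\[
\|Sw\|_{\ell^{r}}^{\,r}=\Big\|\sum_{n}w_{n}y^{(n)}\Big\|_{\ell^{r}}^{\,r}\le\sum_{n}|w_{n}|^{r}\,\|y^{(n)}\|_{\ell^{r}}^{\,r}\le M^{r}\sum_{n}|w_{n}|^{r}=M^{r}\|w\|_{\ell^{r}}^{\,r}.
\]

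The operator $R$ is then obtained by continuous extension. For $x=(x_{n})\in\ell^{r}$ set $x^{(N)}:=\sum_{n=1}^{N}x_{n}e_{n}$; applying the key estimate to the finitely supported differences $x^{(N)}-x^{(N')}$ shows that $(Sx^{(N)})_{N}$ is Cauchy in the complete space $\ell^{r}$, so one may define $Rx:=\lim_{N\to\infty}Sx^{(N)}=\sum_{n=1}^{\infty}x_{n}y^{(n)}$. Linearity of $R$ is immediate from this series representation, and letting $N\to\infty$ in the key estimate gives $\|Rx\|_{\ell^{r}}\le M\|x\|_{\ell^{r}}$, i.e.\ $\|R\|_{\ell^{r}\to\ell^{r}}\le M$. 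Finally, to see that $R$ extends $S$, take $x\in\ell^{q}$: then $x^{(N)}\to x$ in $\ell^{q}$ (finitely supported sequences are dense), hence $Sx^{(N)}\to Sx$ in $\ell^{q}$ and therefore in $\ell^{r}$, while $Sx^{(N)}\to Rx$ in $\ell^{r}$ by construction; uniqueness of limits in the Hausdorff space $\ell^{r}$ forces $Rx=Sx$.

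The one point that requires care — and the reason one cannot merely quote $\|Sx\|_{\ell^{q}}\le M\|x\|_{\ell^{q}}$ and be done — is the mismatch between the exponent of the domain quasi-norm and that of the target: the bound actually needed, $\|y^{(n)}\|_{\ell^{r}}\le M$, is available only because $\|\cdot\|_{\ell^{r}}\le\|\cdot\|_{\ell^{q}}$, and it is precisely this observation that dictates running the argument through the unit-vector expansion rather than applying the hypothesis directly to general elements of $\ell^{q}$.
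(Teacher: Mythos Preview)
Your argument is correct and is essentially the same as the paper's: both proofs pivot on the columns $y^{(n)}=Se_n$, use the norm-decreasing inclusion $\ell^{q}\hookrightarrow\ell^{r}$ to get $\|y^{(n)}\|_{\ell^{r}}\le\|y^{(n)}\|_{\ell^{q}}\le M$, apply the $r$-subadditivity $\|\sum w_n y^{(n)}\|_{\ell^r}^{\,r}\le\sum|w_n|^{r}\|y^{(n)}\|_{\ell^r}^{\,r}$ on finitely supported sequences, and then extend by density; the paper merely packages these steps via the auxiliary notation $\Omega_q$, $\Theta_q$ and a separate lemma. Note, as in the paper's own proof (whose lemma is stated only for $0<q\le1$), your inequality $\|z\|_{\ell^{r}}\le\|z\|_{\ell^{q}}$ tacitly requires $q>0$.
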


We begin with proving an auxiliary result, where the following notation will
be used. Let $T:\,\ell^{0}\rightarrow\ell^{\infty}$ be a bounded linear map.
Then $T$ can be identified with an infinite matrix $\left\{ t_{j,k}\right\}
_{j,k=1}^{\infty}$, where $t_{j,k}=\left\langle
e_{j},T\left(e_{k}\right)\right\rangle .$ If $x=\left(x_{n}\right)_{n=1}^{
\infty}\in\ell^{0}$ then $Tx=y$, $y=\left(y_{j}\right)_{j=1}^{\infty}$ is
defined by the finite sum 
\begin{equation*}
y_{j}=\sum_{k=1}^{\infty}t_{j,k}x_{k}. 
\end{equation*}
For an arbitrary $0<q\leq\infty$ let $\Omega_{q}$ denote the space of all
linear maps $T:\ell^{0}\rightarrow\ell^{\infty}$ such that the quantity 
\begin{equation*}
\Theta_{q}\left(T\right):=\sup\left\{ \left\Vert Tx\right\Vert
_{\ell^{q}}:\,x\in\ell^{0},\left\Vert x\right\Vert _{\ell^{q}}=1\right\} 
\end{equation*}
is finite. Clearly, if $T\in\Omega_{q}$ with the matrix $\left(t_{j,k}
\right)_{j,k=1}^{\infty}$, then for each positive integer $k$ the sequence $
t_{k}:=\left(t_{j,k}\right)_{j=1}^{\infty}=T\left(e_{k}\right)$ belongs to $
\ell^{q}$ and moreover 
\begin{equation}
\left\Vert t_{k}\right\Vert _{\ell^{q}}=\left\Vert
T\left(e_{k}\right)\right\Vert
_{\ell^{q}}\leq\Theta_{q}\left(T\right),\;\;k=1,2,\dots.  \label{eqaux1}
\end{equation}
Hence, we see that the condition 
\begin{equation}
\sup_{k=1,2,\dots}\left\Vert t_{k}\right\Vert _{\ell^{q}}<\infty
\label{lp_necc_cond}
\end{equation}
is necessary for $T\in\Omega_{q}.$ Furthermore, we have

\begin{lemma}
\label{Lemma_bounded_lp} Let $T:\ell^{0}\rightarrow\ell^{\infty}$ be a
linear map with the matrix $\left(t_{j,k}\right)_{j,k=1}^{\infty}.$ Let $
t_{k}=\left(t_{j,k}\right)_{j=1}^{\infty},k\in\mathbb{N}.$ Then, if $0<q\leq1
$ we have

$\left(i\right)T\in\Omega_{q}\iff\sup_{k}\left\Vert t_{k}\right\Vert
_{\ell^{q}}<\infty$ and 
\begin{equation}
\Theta_{q}\left(T\right)=\sup_{k=1,2,\dots}\left\Vert t_{k}\right\Vert
_{\ell^{q}};  \label{lp_norm_condition}
\end{equation}

$\left(ii\right)$ if $T\in\Omega_{q}$ then there exists an extension $
\widetilde{T}$ of $T$ to $\ell^{q}$ with 
\begin{equation}
\Vert\widetilde{T}\Vert_{\ell^{q}\rightarrow\ell^{q}}=\sup_{k=1,2,\dots}
\left\Vert t_{k}\right\Vert _{\ell^{q}}.  \label{lp_ext_estimate}
\end{equation}
\end{lemma}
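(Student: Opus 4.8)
The plan is to treat parts $(i)$ and $(ii)$ together, since the quantitative estimates in \eqref{lp_norm_condition} and \eqref{lp_ext_estimate} are the same and both rest on the same two-sided inequality for the quasi-norm of $Tx$ on $\ell^0$. The necessity direction in $(i)$ is already in hand: condition \eqref{lp_necc_cond} was shown above to be necessary for $T\in\Omega_q$, and moreover \eqref{eqaux1} gives $\sup_k\|t_k\|_{\ell^q}\le\Theta_q(T)$. So the real content is the reverse inequality $\Theta_q(T)\le\sup_k\|t_k\|_{\ell^q}$, which simultaneously proves sufficiency and pins down the value of $\Theta_q(T)$.

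For that reverse bound I would fix $x=(x_k)_{k=1}^\infty\in\ell^0$ with $\|x\|_{\ell^q}=1$, write $Tx=\sum_{k=1}^{N}x_k\,t_k$ (a finite sum, since $x\in\ell^0$), and estimate in $\ell^q$ using the crucial fact that for $0<q\le 1$ the functional $\|\cdot\|_{\ell^q}^q$ is subadditive:
\begin{equation*}
\|Tx\|_{\ell^q}^q=\Big\|\sum_{k=1}^N x_k t_k\Big\|_{\ell^q}^q
\le\sum_{k=1}^N |x_k|^q\,\|t_k\|_{\ell^q}^q
\le\Big(\sup_k\|t_k\|_{\ell^q}^q\Big)\sum_{k=1}^N|x_k|^q
=\sup_k\|t_k\|_{\ell^q}^q.
\end{equation*}
Taking $q$-th roots and then the supremum over admissible $x$ yields $\Theta_q(T)\le\sup_k\|t_k\|_{\ell^q}$, which combined with \eqref{eqaux1} gives \eqref{lp_norm_condition} and the equivalence in $(i)$.

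For $(ii)$, the estimate just proved shows that on the dense subspace $\ell^0\subset\ell^q$ the map $T$ satisfies $\|Tx\|_{\ell^q}\le M\|x\|_{\ell^q}$ with $M:=\sup_k\|t_k\|_{\ell^q}$; since $\ell^q$ is a complete quasi-Banach space and $\ell^0$ is dense in it, $T$ extends uniquely by continuity to a bounded linear operator $\widetilde T:\ell^q\to\ell^q$ with $\|\widetilde T\|_{\ell^q\to\ell^q}\le M$. The reverse inequality $\|\widetilde T\|_{\ell^q\to\ell^q}\ge M$ is immediate because $\widetilde T e_k=t_k$ and $\|e_k\|_{\ell^q}=1$, giving \eqref{lp_ext_estimate}. (Concretely, the extension is $\widetilde T x=\sum_k x_k t_k$, the series converging in $\ell^q$ by the same subadditivity estimate applied to tails.)

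The only point requiring genuine care — and the main obstacle, such as it is — is the use of subadditivity of $\|\cdot\|_{\ell^q}^q$: this is exactly where the hypothesis $q\le 1$ is indispensable, and it is also what makes the constant in \eqref{lp_norm_condition} and \eqref{lp_ext_estimate} an equality rather than merely an equivalence (for $q>1$ one would only get the weaker two-sided estimate with the triangle inequality constant, and in fact \eqref{lp_norm_condition} fails). One should also note the degenerate case $q$ near its lower range and, if $q=0$ is to be admitted under "$0<q\le 1$" — here the statement restricts to $0<q\le 1$, so no such issue arises — but a remark that the density of $\ell^0$ in $\ell^q$ genuinely uses $q<\infty$ is worth keeping in mind when this lemma is later applied.
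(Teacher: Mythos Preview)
Your proof is correct and follows essentially the same approach as the paper: both use that $\|\cdot\|_{\ell^q}^q$ is subadditive for $0<q\le 1$ to bound $\|Tx\|_{\ell^q}^q$ by $\sum_k|x_k|^q\|t_k\|_{\ell^q}^q$, and then extend by density of $\ell^0$ in $\ell^q$. The only cosmetic difference is that the paper carries out the subadditivity estimate coordinate-wise (first bounding $|y_j|^q\le\sum_k|t_{j,k}|^q|x_k|^q$ and then summing over $j$), whereas you invoke the vector-level inequality directly; the content is identical.
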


\begin{proof}
$\left(i\right)$. If $T\in\Omega_{q}$ then the reasoning preceding to the
lemma implies condition \eqref{lp_necc_cond}.

Conversely, assume that we have \eqref{lp_necc_cond}. Then, for each $
x=\left(x_{n}\right)_{n=1}^{\infty}\in\ell^{0}$, denoting $Tx=y$, $
y=\left(y_{j}\right)_{j=1}^{\infty}$, and taking into account that $0<q\leq1$
, we have 
\begin{equation*}
\left\vert y_{j}\right\vert =\left\vert
\sum_{k=1}^{\infty}t_{j,k}x_{k}\right\vert \leq\sum_{k=1}^{\infty}\left\vert
t_{j,k}\right\vert \left\vert x_{k}\right\vert
\leq\left(\sum_{k=1}^{\infty}\left\vert t_{j,k}\right\vert ^{q}\left\vert
x_{k}\right\vert ^{q}\right)^{1/q}. 
\end{equation*}
Hence, 
\begin{eqnarray}
\left\Vert Tx\right\Vert _{\ell^{q}}^{q} & = & \sum_{j=1}^{\infty}\left\vert
y_{j}\right\vert ^{q}\leq\sum_{j=1}^{\infty}\sum_{k=1}^{\infty}\left\vert
t_{j,k}\right\vert ^{q}\left\vert x_{k}\right\vert ^{q}  \notag \\
& = & \sum_{k=1}^{\infty}\left(\sum_{j=1}^{\infty}\left\vert
t_{j,k}\right\vert ^{q}\right)\left\vert x_{k}\right\vert
^{q}=\sum_{k=1}^{\infty}\left\Vert t_{k}\right\Vert
_{\ell^{q}}^{q}\left\vert x_{k}\right\vert ^{q}\leq  \notag \\
& \leq & \sup_{k=1,2,\dots}\left\Vert t_{k}\right\Vert
_{\ell^{q}}^{q}\left\Vert x\right\Vert _{\ell^{q}}^{q}.
\label{lp_lemma_estimate_g_norm}
\end{eqnarray}
Therefore, $T\in\Omega_{q}$ and $\Theta_{q}\left(T\right)\leq\sup_{k=1,2,
\dots}\left\Vert t_{k}\right\Vert _{\ell^{q}}$. Moreover, combining this
with inequality \eqref{eqaux1} and taking into account that $\ell^{0}$ is
dense in $\ell^{q}$, we get \eqref{lp_norm_condition}.

$\left(ii\right).$ Let $T\in\Omega_{q}$. Since $\ell^{0}$ is dense in $
\ell^{q}$, it follows from inequality \eqref{lp_lemma_estimate_g_norm} that
we can define the linear extension $\widetilde{T}:\ell^{q}\rightarrow\ell^{q}
$ of $T$ by $\widetilde{T}x=y$, where $x=\left(x_{j}\right)_{j=1}^{\infty}$, 
$y_{j}=\sum_{k=1}^{\infty}t_{j,k}x_{k}.$ Since $\Vert\widetilde{T}
\Vert_{\ell^{q}\rightarrow\ell^{q}}=\Theta_{q}\left(T\right)$, then in view
of \eqref{lp_norm_condition}, formula \eqref{lp_ext_estimate} is also
verified.
\end{proof}

\begin{proof}[Proof of Theorem~\protect\ref{Th-bounded-lp}]
Let $T$ be the restriction of the given operator $S$ to the space $\ell^{0}.$
Then $T\in\Omega_{q}$ and $\Theta_{q}\left(T\right)=\left\Vert S\right\Vert
_{\ell^{q}\to\ell^{q}}.$ It is obvious that the extension $\widetilde{T}$ of 
$T$ to $\ell^{q}$ defined in Lemma \ref{Lemma_bounded_lp} is equal to $S.$
Now, if $\left(t_{j,k}\right)_{j,k=1}^{\infty}$ is the matrix associated
with $T$ and $t_{k}=\left(t_{j,k}\right)_{j=1}^{\infty}$, the embedding $
\ell^{q}\overset{1}{\subset}\ell^{r}$, for $q<r$, implies that $\left\Vert
t_{k}\right\Vert _{\ell^{r}}\leq\left\Vert t_{k}\right\Vert _{\ell^{q}}$, $
k=1,2,\dots,$ and hence $T\in\Omega_{r}$ with $\Theta_{r}\left(T\right)\leq
\Theta_{q}\left(T\right).$ Therefore, by Lemma \ref{Lemma_bounded_lp}, there
exists an extension $R$ of $T$ defined on $\ell^{r}$. Clearly, the
restriction of $R$ to $\ell^{q}$ equals $S$ and 
\begin{equation*}
\left\Vert R\right\Vert
_{\ell^{r}\to\ell^{r}}=\Theta_{r}\left(T\right)\leq\Theta_{q}\left(T\right)=
\left\Vert S\right\Vert _{\ell^{q}\to\ell^{q}}, 
\end{equation*}
which completes the proof.
\end{proof}

\begin{corollary}
\label{triv cor} If $0\leq p<q<1$, then we have 
\begin{equation*}
Int\left(\ell^{p},\ell^{q}\right)=\{X\subseteq\ell^{q}:\,X\in
Int\left(\ell^{p},\ell^{1}\right)\}. 
\end{equation*}
\end{corollary}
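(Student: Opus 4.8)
The plan is to show that, for $0\le p<q<1$, the couples $(\ell^{p},\ell^{q})$ and $(\ell^{p},\ell^{1})$ have, in the sense relevant here, \emph{the same} spaces of admissible operators, the only visible difference being the ambient sum space ($\ell^{q}=\ell^{p}+\ell^{q}$ versus $\ell^{1}=\ell^{p}+\ell^{1}$). The key input is Theorem~\ref{Th-bounded-lp}: since $\ell^{p}\subset\ell^{q}\subset\ell^{1}$, any linear map bounded on $\ell^{p}$ automatically extends, without increase of norm, to a bounded operator on $\ell^{q}$ and on $\ell^{1}$.

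First I would settle the ``intermediate space'' bookkeeping. If $X\in Int(\ell^{p},\ell^{q})$, then $\ell^{p}=\ell^{p}\cap\ell^{q}\subset X\subset\ell^{p}+\ell^{q}=\ell^{q}\subset\ell^{1}$, all inclusions continuous, so $X\subseteq\ell^{q}$ and $X$ is intermediate with respect to $(\ell^{p},\ell^{1})$. Conversely, if $X\subseteq\ell^{q}$ and $X\in Int(\ell^{p},\ell^{1})$, then $\ell^{p}\subset X\subset\ell^{1}$ continuously; the inclusion $X\hookrightarrow\ell^{q}$ has closed graph (if $x_{n}\to x$ in $X$ and $x_{n}\to y$ in $\ell^{q}$, both convergences persist in $\ell^{1}$, forcing $x=y$), hence is continuous by the Closed Graph Theorem for $F$-spaces, so $X$ is intermediate with respect to $(\ell^{p},\ell^{q})$.

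For the inclusion $\{X\subseteq\ell^{q}:\,X\in Int(\ell^{p},\ell^{1})\}\subseteq Int(\ell^{p},\ell^{q})$, pick such an $X$ and $T\in\mathfrak{L}(\ell^{p},\ell^{q})$. Applying Theorem~\ref{Th-bounded-lp} with the pair $(q,r)$ in its statement replaced by $(p,1)$, I extend $T|_{\ell^{p}}$ to a bounded operator $\widehat{T}$ on $\ell^{1}$. A density argument identifies $\widehat{T}$ on $\ell^{q}$ with $T$: approximate $x\in\ell^{q}$ by eventually zero sequences $x_{n}\in\ell^{0}$ (on which $\widehat{T}$ and $T$ coincide, since both agree with $T|_{\ell^{p}}$ there), and let $n\to\infty$ using the continuity of $\widehat{T}$ on $\ell^{1}$, of $T$ on $\ell^{q}$, and the continuous embedding $\ell^{q}\hookrightarrow\ell^{1}$. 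Hence $\widehat{T}\in\mathfrak{L}(\ell^{p},\ell^{1})$, so $\widehat{T}$ is bounded on $X$ because $X\in Int(\ell^{p},\ell^{1})$; and since $\widehat{T}=T$ on $\ell^{q}\supseteq X$, the operator $T$ is bounded on $X$. The opposite inclusion is proved symmetrically: given $X\in Int(\ell^{p},\ell^{q})$ and $T\in\mathfrak{L}(\ell^{p},\ell^{1})$, use Theorem~\ref{Th-bounded-lp} with $(q,r)$ replaced by $(p,q)$ to extend $T|_{\ell^{p}}$ to $\ell^{q}$, check by the same density argument that this extension equals $T$ on $\ell^{q}$, conclude $T\in\mathfrak{L}(\ell^{p},\ell^{q})$, and invoke $X\in Int(\ell^{p},\ell^{q})$.

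The only step that is not pure definition chasing is the density argument matching the Theorem~\ref{Th-bounded-lp} extension with the original operator on $\ell^{q}$; this is precisely where the density of $\ell^{0}$ in $\ell^{q}$ and in $\ell^{1}$, together with the continuity of the relevant embeddings, enters. In the endpoint case $p=0$ one reads ``operator'' as in Definition~\ref{def:Int_ell_0_ell_q} (a linear map on the sum space whose restriction to $\ell^{0}$ is a bounded homomorphism), which introduces no extra difficulty.
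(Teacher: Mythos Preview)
Your proof is correct and follows essentially the same strategy as the paper: use Theorem~\ref{Th-bounded-lp} to identify $\mathfrak{L}(\ell^{p},\ell^{q})$ with (the restriction to $\ell^{q}$ of) $\mathfrak{L}(\ell^{p},\ell^{1})$. The one minor difference is that for the inclusion $Int(\ell^{p},\ell^{q})\subseteq\{X\subseteq\ell^{q}:X\in Int(\ell^{p},\ell^{1})\}$ the paper simply invokes the interpolation fact $\ell^{q}\in Int(\ell^{p},\ell^{1})$ (so any $S\in\mathfrak{L}(\ell^{p},\ell^{1})$ is automatically bounded on $\ell^{q}$, hence lies in $\mathfrak{L}(\ell^{p},\ell^{q})$), whereas you apply Theorem~\ref{Th-bounded-lp} a second time together with the density argument; your route is a bit more hands-on but entirely valid, and your explicit handling of the intermediate-space continuity via the Closed Graph Theorem and of the density identification is more detailed than what the paper writes.
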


\begin{proof}
Let $X\in Int\left(\ell^{p},\ell^{q}\right)$. Then, $X\subseteq\ell^{q}$.
Moreover, since $\ell^{q}\in Int\left(\ell^{p},\ell^{1}\right)$, then for
each $S\in\mathfrak{L}(\ell^{p},\ell^{1})$ we have $S:\ell^{q}\rightarrow
\ell^{q}$. Thus, $S:\,X\to X$ and so $X\in Int\left(\ell^{p},\ell^{1}\right).
$

For the converse, assume that $X\subseteq\ell^{q}$ and $X\in
Int\left(\ell^{p},\ell^{1}\right)$. Then, $\ell^{p}\subseteq
X\subseteq\ell^{q}$. Furthermore, if $S\in\mathfrak{L}(\ell^{p},\ell^{q})$,
an application of Theorem~\ref{Th-bounded-lp} gives us a linear extension $
R:\,\ell^{1}\rightarrow\ell^{1}$ of $S$. Since $R:\,X\rightarrow X$, then $
S=R_{|{\ell^{q}}}:\,X\to X.$ As a result, we conclude that $X\in
Int\left(\ell^{p},\ell^{q}\right).$
\end{proof}


\subsection{An interpolation property of symmetric quasi-Banach sequence
spaces}

\label{An interpolation property}

It is well known that there are symmetric Banach function (resp. sequence)
spaces which are \textit{not} interpolation spaces with respect to the
couple $(L^{1}(0,\infty),L^{\infty}(0,\infty))$ (resp. $(\ell^{1},\ell^{
\infty})$) (see e.g. \cite{Russu}, \cite[Theorem~II.5.5]{KPS82}, \cite[
Example 2.a.11]{LT79-II}). In contrast to that, all symmetric quasi-Banach
function (resp. sequence) spaces are interpolation spaces with respect to
the couple $(L^{0}(0,\infty),L^{\infty}(0,\infty))$ (resp. $
(\ell^{0},\ell^{\infty})$) \cite{Ast-94,HM-90}. Moreover, it turns out that each symmetric quasi-Banach sequence space is an interpolation space between $\ell^{p}$ and $\ell^{\infty}$ for some appropriate $p>0$.

\vskip0.5cm

For every $n\in\mathbb{N}$ we define the dilation operator $
D_{n}:\,\ell^{\infty}\rightarrow\ell^{\infty}$ by 
\begin{equation}
D_{n}:\,(x_{k})_{k=1}^{\infty}\mapsto\left(x_{[(1+k)/n]}\right)_{k=1}^{\infty}.
\label{extra8}
\end{equation}

Let $E$ be a symmetric quasi-normed sequence space. By the Aoki-Rolewicz
theorem (see e.g. \cite[Lemma~3.10.1]{BL76}), we can define a subadditive
functional on $E$, which is equivalent to the functional $\left\Vert
\cdot\right\Vert _{E}^{\sigma_{E}}$ for some $\sigma_{E}>0$ (which is called often the Aoki-Rolewicz index). Therefore, since $E$ is symmetric, by the definition of $D_{n}
$, one can easily deduce that $\Vert D_{n}\Vert_{E\rightarrow E}\leq
Cn^{1/\sigma_{E}}$, $n=1,2,\dots$. Hence, the upper Boyd index $q_{E}$ of $E$, defined by 
\begin{equation*}
q_{E}:=\lim_{n\rightarrow\infty}\frac{\log\Vert D_{n}\Vert_{E\rightarrow E}}{
\log n}, 
\end{equation*}
does not exceed $1/\sigma_{E}$ and so it is finite.

\begin{proposition}
\label{L3} For every symmetric quasi-Banach sequence space $E$ there exists $
p>0$ such that $E\in Int(\ell^{p},\ell^{\infty})$.
\end{proposition}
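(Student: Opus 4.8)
The plan is to exploit the finiteness of the upper Boyd index $q_E$ established just before the statement, and to choose $p>0$ so small that $1/p$ strictly exceeds $q_E$; then I would verify that every operator bounded on $\ell^p$ and $\ell^\infty$ acts boundedly on $E$ via a $\mathcal K$-functional comparison. Concretely, pick $p$ with $0<p<1/q_E$ (interpreting $1/0=\infty$ if $q_E=0$). For $x\in\ell^p$ I want to estimate $\|x\|_E$ by $\|x\|_{\ell^p}$, and more generally I want to show $E$ is $\mathcal K$-monotone with respect to $(\ell^p,\ell^\infty)$, which by Definition~\ref{K-monotonedef} forces $E\in Int(\ell^p,\ell^\infty)$.

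\medskip

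\textbf{Step 1: reduce to a $\mathcal K$-functional/rearrangement estimate.} Since $E$ is symmetric and $E\subset\ell^\infty$, a standard discretization shows that for $x\in\ell^\infty$ one has $x=\sum_{j\ge 0}\big(x^\ast_{2^j}-x^\ast_{2^{j+1}}\big)\,\mathbf 1_{\{1,\dots,2^{j+1}\}}$ (in the pointwise sense after rearranging), hence by the lattice/symmetry property and the quasi-triangle inequality written via the Aoki--Rolewicz subadditive functional $\|\cdot\|_E^{\sigma_E}$,
\begin{equation*}
\|x\|_E^{\sigma_E}\le C\sum_{j\ge 0}\big(x^\ast_{2^j}\big)^{\sigma_E}\,\big\|\mathbf 1_{\{1,\dots,2^{j+1}\}}\big\|_E^{\sigma_E}.
\end{equation*}
Because $\|\mathbf 1_{\{1,\dots,2^{j+1}\}}\|_E=\|D_{2^{j+1}}e_1\|_E\le \|D_{2^{j+1}}\|_{E\to E}\,\|e_1\|_E$, the growth of $\|\mathbf 1_{\{1,\dots,n\}}\|_E$ is controlled by $n^{q_E+\varepsilon}$ for every $\varepsilon>0$ and all large $n$. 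Thus, up to constants, $\|x\|_E$ is dominated by a weighted $\ell^{\sigma_E}$-type sum of the $x^\ast_{2^j}$ with weights $2^{j(q_E+\varepsilon)}$.

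\medskip

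\textbf{Step 2: compare with the $\mathcal K$-functional of the couple $(\ell^p,\ell^\infty)$.} By \eqref{Holmsteds_formula4a}, ${\mathcal K}(n^{1/p},x;\ell^p,\ell^\infty)$ is comparable to $(\mathcal P_p x)_n=\big(\sum_{k\le n}(x^\ast_k)^p\big)^{1/p}$, which dominates $n^{1/p}x^\ast_n$. Hence $x^\ast_{2^j}\le 2^{-j/p}{\mathcal K}(2^{j/p},x;\ell^p,\ell^\infty)$. Plugging this into the bound from Step~1 gives, for a suitable $\varepsilon>0$ with $q_E+\varepsilon<1/p$,
\begin{equation*}
\|x\|_E^{\sigma_E}\le C\sum_{j\ge 0} 2^{-j\sigma_E(1/p-q_E-\varepsilon)}\,{\mathcal K}\big(2^{j/p},x;\ell^p,\ell^\infty\big)^{\sigma_E}.
\end{equation*}
Since ${\mathcal K}(t,x)$ is nondecreasing and concave, a routine comparison replaces the discrete values by a quantity depending only on the function $t\mapsto{\mathcal K}(t,x;\ell^p,\ell^\infty)$; in particular if ${\mathcal K}(t,y;\ell^p,\ell^\infty)\le {\mathcal K}(t,x;\ell^p,\ell^\infty)$ for all $t>0$ with $x\in E$, then the same bound gives $y\in E$ with $\|y\|_E\le C\|x\|_E$. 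The geometric factor $2^{-j\sigma_E(1/p-q_E-\varepsilon)}$ is summable precisely because $p<1/q_E$, which is the whole point of the choice of $p$.

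\medskip

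\textbf{Step 3: conclude.} The estimate of Step~2 shows that $E$ is a (uniform) $\mathcal K$-monotone space with respect to $(\ell^p,\ell^\infty)$; it also shows $\ell^p\subset E$ continuously (apply it with $y$ arbitrary in $\ell^p$, comparing against a suitable $x$, or directly estimate $\|x\|_E$ by $\|x\|_{\ell^p}$ using ${\mathcal K}(t,x;\ell^p,\ell^\infty)\le\|x\|_{\ell^p}$), while $E\subset\ell^\infty$ holds by hypothesis. By the last sentence of Definition~\ref{K-monotonedef}, every $\mathcal K$-monotone space is an interpolation space, so $E\in Int(\ell^p,\ell^\infty)$.

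\medskip

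The main obstacle I expect is Step~1/Step~2 bookkeeping in the quasi-Banach setting: one must handle the nonconvexity of $\|\cdot\|_E$ through the Aoki--Rolewicz exponent $\sigma_E$ and make sure the dyadic decomposition of a general $x\in\ell^\infty$ (not eventually zero) converges in $E$, which requires knowing $E$ is, say, $\sigma$-order continuous or at least that the tails are controlled — one may instead argue on eventually-zero $x$ first and then invoke the Fatou-type or density properties, or simply prove the $\mathcal K$-monotonicity inequality for $x$ with $\|x\|_E<\infty$ directly. The other delicate point is extracting the polynomial bound $\|\mathbf 1_{\{1,\dots,n\}}\|_E\le C n^{q_E+\varepsilon}$ uniformly, which follows from the definition of $q_E$ as a limit together with submultiplicativity of $n\mapsto\|D_n\|_{E\to E}$, but needs to be stated carefully so that the constant $C$ absorbs the finitely many small $n$.
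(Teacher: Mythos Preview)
Your Step~2 contains a genuine gap. From Step~1 and the hypothesis $\mathcal K(\cdot,y)\le\mathcal K(\cdot,x)$ you obtain
\[
\|y\|_E^{\sigma_E}\le C\sum_{j\ge0}2^{-j\sigma_E(1/p-q_E-\varepsilon)}\,\mathcal{K}\big(2^{j/p},x;\ell^p,\ell^\infty\big)^{\sigma_E},
\]
but you never show that the right-hand side is bounded by $C'\|x\|_E^{\sigma_E}$, and in fact it need not be. For a concrete failure take $E=\ell^r$ with $r>1$ (so $q_E=1/r$, $\sigma_E=1$), fix any $\varepsilon>0$ with $1/r+\varepsilon<1/p$, and set $x_k=k^{-1/r-\delta}$ with $0<\delta<\varepsilon$. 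Then $x\in\ell^r$, while $(\mathcal P_px)_{2^j}$ is comparable to $2^{j(1/p-1/r-\delta)}$, so the displayed sum behaves like $\sum_j 2^{j(\varepsilon-\delta)}=\infty$. The point is that your dyadic bound in Step~1 is only an \emph{upper} estimate for $\|\cdot\|_E$; there is no matching lower bound, so routing the argument through this intermediate functional loses exactly the information needed to recover $\|x\|_E$ on the other side.

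The paper closes this gap via Lemma~\ref{L2} (the $p$-Hardy bound of Montgomery-Smith). From the $\mathcal K$-inequality one extracts the pointwise majorization $y_n^\ast\le\big(\tfrac1n\sum_{k\le n}(x_k^\ast)^p\big)^{1/p}$, and Lemma~\ref{L2} (valid for $p<\sigma_E$) says precisely that this averaged sequence has $E$-norm at most $C\|x\|_E$; since $E$ is a symmetric lattice this gives $\|y\|_E\le C\|x\|_E$ directly. Your estimate $x^\ast_{2^j}\le 2^{-j/p}\mathcal K(2^{j/p},x)$ goes in the wrong direction to substitute for this; what is really needed is the boundedness of the Hardy-type operator on $E$, and that is a genuine lemma, not a bookkeeping detail. (Note also that the paper's choice $p<\sigma_E$ is what makes Lemma~\ref{L2} applicable; your weaker choice $p<1/q_E$ would not suffice even if the rest of the argument were repaired.)
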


In the proof of this result we will use

\begin{lemma}
(\cite[Theorem~1]{Mont-94} or \cite[Proposition 5.7]{CSZ}). \label{L2} Let $E
$ be a symmetric quasi-Banach sequence space and let $p\in(0,\sigma_{E})$.
Then, the operator 
\begin{equation*}
x=(x_{k})_{k=1}^{\infty}\mapsto\left(\left(\frac{1}{n}\sum_{k=1}^{n}(x_{k}^{
\ast})^{p}\right)^{1/p}\right)_{n=1}^{\infty} 
\end{equation*}
is bounded in $E$.
\end{lemma}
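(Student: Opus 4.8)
\textbf{Plan for the proof of Lemma \ref{L2}.}

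The plan is to reduce the boundedness of the operator
\[
\mathcal{H}_p:\,x=(x_k)_{k=1}^\infty\mapsto\left(\left(\frac1n\sum_{k=1}^n(x_k^\ast)^p\right)^{1/p}\right)_{n=1}^\infty
\]
on the symmetric quasi-Banach sequence space $E$ to the known boundedness of the discrete Hardy (Calder\'on) averaging operator on the $p$-convexification of $E$. Since $p\in(0,\sigma_E)$, the $p$-convexified space $E^{(p)}$, consisting of those $y$ with $|y|^{1/p}\in E$ (up to sign) and normed by $\|y\|_{E^{(p)}}:=\big\||y|^{1/p}\big\|_E^{\,p}$, carries a genuine norm that is equivalent to a Banach norm: indeed $p<\sigma_E$ means $E$ is $\sigma_E$-convex in the Aoki--Rolewicz sense, so $E^{(p)}$ is $(\sigma_E/p)$-convex with $\sigma_E/p>1$, hence renormable as a Banach lattice. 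Moreover $E^{(p)}$ is again a symmetric (sequence) space because $E$ is.

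The key step is then to observe that for $y=(y_k)$ with $y_k\ge0$ nonincreasing, $(\mathcal{H}_p x)_n^p=\frac1n\sum_{k=1}^n y_k$ where $y_k=(x_k^\ast)^p$, i.e. $\mathcal{H}_p$ corresponds, under the (positively homogeneous of degree $p$, but lattice- and rearrangement-compatible) passage $x\mapsto y$, to the ordinary Cesàro/Hardy operator $A:\,y\mapsto\left(\tfrac1n\sum_{k=1}^n y_k^\ast\right)_{n}$ acting on the rearranged sequence. First I would record that $A$ is bounded on the symmetric Banach sequence space $E^{(p)}$: this is the classical Calder\'on--Hardy inequality, valid in any symmetric Banach sequence space whose lower Boyd index is strictly bigger than $1$, and here the lower Boyd index of $E^{(p)}$ equals $p_E/p>\sigma_E\cdot 0$… more precisely one uses that the dilation operators $D_n$ satisfy $\|D_n\|_{E^{(p)}\to E^{(p)}}=\|D_n\|_{E\to E}^{p}\le C n^{p/\sigma_E}$ with $p/\sigma_E<1$, which is exactly the boundedness condition for the averaging operator on a symmetric space (cf. the standard proof via $Af=\int_0^1 (D_{1/s}f)\,\tfrac{ds}{s}$ type decompositions, discretized). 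Then, for general $x\in E$, write $\|\mathcal{H}_p x\|_E^{p}=\big\|(\mathcal{H}_p x)^p\big\|_{E^{(p)}}=\|A\widetilde y\|_{E^{(p)}}$ where $\widetilde y=((x_k^\ast)^p)_k$ is already nonincreasing, so $\|A\widetilde y\|_{E^{(p)}}\le C\|\widetilde y\|_{E^{(p)}}=C\|x\|_E^{p}$, giving $\|\mathcal{H}_p x\|_E\le C^{1/p}\|x\|_E$, as required.

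The step I expect to be the main obstacle is the careful bookkeeping in the passage $E\leftrightarrow E^{(p)}$: one must check that $E^{(p)}$ is indeed a symmetric \emph{quasi-Banach} lattice equivalent to a \emph{Banach} one precisely because $p<\sigma_E$ (so that one is genuinely allowed to invoke the linear Calder\'on--Hardy inequality, which needs an honest norm and the triangle inequality for the integral/sum decomposition of the averaging operator), and that the Boyd-index / dilation-norm estimate transfers correctly, namely $q_{E^{(p)}}=p\cdot q_E$ and the relevant quantity $\|D_n\|_{E^{(p)}\to E^{(p)}}\le Cn^{p/\sigma_E}$ with $p/\sigma_E<1$. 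A secondary, more routine point is that $\mathcal{H}_p x$ is automatically a nonincreasing sequence, so that its $E$-quasi-norm is determined by the fundamental function behaviour and no extra rearrangement is lost; and that one may assume without loss of generality $x\ge0$ and nonincreasing from the start, since both sides depend only on $x^\ast$. None of these is deep, but they are where the hypothesis $p\in(0,\sigma_E)$ is genuinely consumed, and stating them cleanly is the crux of a correct write-up.
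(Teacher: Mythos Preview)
The paper does not give its own proof of this lemma; it simply cites \cite[Theorem~1]{Mont-94} and \cite[Proposition~5.7]{CSZ}. Your plan via $p$-convexification is a correct and natural route, and it is essentially a reorganisation of the argument in those references rather than a genuinely different idea.

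Concretely, Montgomery-Smith works directly in the quasi-Banach space $E$: one uses the Aoki--Rolewicz $\sigma_E$-subadditive renorming together with a dyadic dilation decomposition of $\mathcal H_p x$ (roughly $(\mathcal H_p x)_n\lesssim\sum_{j\ge0}2^{-j\theta}(D_{2^j}x^\ast)_n$ for some $\theta>0$ coming from $p<\sigma_E$), and sums the resulting geometric series in the $\sigma_E$-power of the quasi-norm. Your plan does the same computation after first passing to $E^{(p)}$: the identity $\|D_n\|_{E^{(p)}\to E^{(p)}}=\|D_n\|_{E\to E}^{\,p}$ converts the hypothesis $p<\sigma_E$ into the dilation bound $q_{E^{(p)}}\le p/\sigma_E<1$ (in the paper's convention for the upper Boyd index), and then the ordinary Ces\`aro operator is bounded on the symmetric \emph{Banach} space $E^{(p)}$ by the classical argument. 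Transferring back via $\|\mathcal H_p x\|_E^{\,p}=\|A((x^\ast)^p)\|_{E^{(p)}}$ finishes the job. The two arguments are the same estimate viewed from opposite ends of the map $y\mapsto|y|^{1/p}$.

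Two small clean-ups for the write-up: the garbled sentence about ``the lower Boyd index of $E^{(p)}$ equals $p_E/p>\sigma_E\cdot0$'' should be replaced by the clean statement $q_{E^{(p)}}=p\,q_E\le p/\sigma_E<1$, which is the correct (sufficient) condition for the discrete Ces\`aro operator on a symmetric Banach sequence space; and when you invoke boundedness of $A$ on $E^{(p)}$, make sure you do the estimate in a genuine \emph{norm} equivalent to $\|\cdot\|_{E^{(p)}}$ (which exists precisely because $\sigma_{E^{(p)}}=\sigma_E/p>1$), so that the triangle inequality is available for the dilation sum. With those points made explicit your outline becomes a complete proof.
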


\begin{proof}[Proof of Proposition~\protect\ref{L3}]
By \cite[Theorem~3]{Cwikel1}, it suffices to prove that there is a constant $
C>$ such that for any $x\in E$ and $y\in\ell^{\infty}$ satisfying 
\begin{equation*}
\sum_{k=1}^{n}(y_{k}^{\ast})^{p}\leq\sum_{k=1}^{n}(x_{k}^{\ast})^{p},\;
\;n=1,2,\dots, 
\end{equation*}
we have $y\in E$ and $\Vert y\Vert_{E}\leq C\Vert x\Vert_{E}$.

Since 
\begin{equation*}
y_{n}^{\ast}\leq\Big(\frac{1}{n}\sum_{k=1}^{n}(y_{k}^{\ast})^{p}\Big)
^{1/p},\;\;n=1,2,\dots, 
\end{equation*}
then by the preceding lemma we have 
\begin{equation*}
\Vert y\Vert_{E}\leq\left\Vert \left(\Big(\frac{1}{n}\sum_{k=1}^{n}(y_{k}^{
\ast})^{p}\Big)^{1/p}\right)_{n}\right\Vert _{E}\leq\left\Vert \left(\Big(
\frac{1}{n}\sum_{k=1}^{n}(x_{k}^{\ast})^{p}\Big)^{1/p}\right)_{n}\right\Vert
_{E}\leq C\Vert x\Vert_{E}. 
\end{equation*}
\end{proof}

\vskip1cm 

\section{Arazy-Cwikel type properties for the scale of $\ell^{p}$-spaces, $
0\leq p\leq\infty$.}

\label{Section-AC} In \cite{Arazy-Cwikel}, Arazy and Cwikel have proved that
for all $1\leq p<q\leq\infty$ and for each underlying measure space 
\begin{equation*}
Int\left(L^{p},L^{q}\right)=Int\left(L^{1},L^{q}\right)\cap
Int\left(L^{p},L^{\infty}\right). 
\end{equation*}

Recently, a similar result has been obtained in \cite{CSZ} for function
quasi-Banach spaces on $(0,\infty)$ with the Lebesgue measure; namely, it was proved there that for all $0<p<q<\infty$ 
\begin{equation*}
Int\left(L^{p},L^{q}\right)=Int\left(L^{0},L^{q}\right)\cap
Int\left(L^{p},L^{\infty}\right). 
\end{equation*}

In this section, we extend the above Arazy-Cwikel theorem in the sequence
space setting.

\begin{theorem}
\label{int spaces} Let $0\leq s<p<q<r\leq\infty.$ Then, we have 
\begin{equation*}
Int\left(\ell^{p},\ell^{q}\right)=Int\left(\ell^{s},\ell^{q}\right)\cap
Int\left(\ell^{p},\ell^{r}\right). 
\end{equation*}
\end{theorem}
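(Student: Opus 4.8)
The inclusion $Int(\ell^p,\ell^q)\subseteq Int(\ell^s,\ell^q)\cap Int(\ell^p,\ell^r)$ is the easy direction, and I would dispose of it first. If $X\in Int(\ell^p,\ell^q)$ then $\ell^p\cap\ell^q=\ell^p\subseteq X\subseteq\ell^q=\ell^p+\ell^q$, so $X$ is intermediate for both $(\ell^s,\ell^q)$ and $(\ell^p,\ell^r)$ once one checks the elementary inclusions $\ell^s\subseteq X\subseteq \ell^q$ and $\ell^p\subseteq X\subseteq \ell^r$. For the operator condition: any $T\in\mathfrak L(\ell^s,\ell^q)$ is bounded on $\ell^p$ (since $\ell^p\in Int(\ell^s,\ell^q)$, as $\ell^p$ with $s<p<q$ is an interpolation space between $\ell^s$ and $\ell^q$, recalled in the Preliminaries) and on $\ell^q$, hence $T\in\mathfrak L(\ell^p,\ell^q)$ and so $T:X\to X$; thus $X\in Int(\ell^s,\ell^q)$. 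The same argument with $r$ in place of $s$ gives $X\in Int(\ell^p,\ell^r)$.

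The substance is the reverse inclusion: suppose $X\subseteq\ell^q$ with $X\in Int(\ell^s,\ell^q)$ and $X\in Int(\ell^p,\ell^r)$; we must show $X\in Int(\ell^p,\ell^q)$. So take $T\in\mathfrak L(\ell^p,\ell^q)$ and an arbitrary $x\in X$; the goal is $Tx\in X$ with a norm bound. The natural strategy, mirroring Arazy--Cwikel, is to \emph{split} $T$ (or rather to split the image, or to factor $Tx$ through an auxiliary operator) so that the ``$L^1$-like'' endpoint behavior is handled by the hypothesis $X\in Int(\ell^s,\ell^q)$ and the ``$L^\infty$-like'' endpoint by $X\in Int(\ell^p,\ell^r)$. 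Concretely, I expect the proof to invoke Theorem~\ref{p3} (announced in the introduction as the main ingredient), whose role should be to produce, for each $x$, a decomposition $Tx = u + v$ (or a realization of $Tx=Sx$ for some $S$) where $u$ lies in the orbit of $x$ under $\mathfrak L(\ell^s,\ell^q)$ and $v$ in the orbit under $\mathfrak L(\ell^p,\ell^r)$, with quasi-norm control $\|u\|,\|v\|\lesssim\|x\|_{\ell^q}$ or $\|x\|_{\ell^p}$ as appropriate. Granting such a statement, one concludes $u\in X$ (from $X\in Int(\ell^s,\ell^q)$ applied via orbits), $v\in X$ (from $X\in Int(\ell^p,\ell^r)$), hence $Tx=u+v\in X$, and the uniform bounds assemble into boundedness of $T$ on $X$. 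A cleaner route, if available, is: show first that $Int(\ell^p,\ell^q)$ is described by $\mathcal K$-monotonicity relative to a concrete functional built from the Holmstedt operators $\mathcal P_p,\mathcal Q_q$ of Section~\ref{Holmstedt}, and then verify that being $\mathcal K$-monotone for $(\ell^s,\ell^q)$ together with $\mathcal K$-monotone for $(\ell^p,\ell^r)$ forces $\mathcal K$-monotonicity for $(\ell^p,\ell^q)$ --- this is the sequence-space analogue of the computation in \cite{CSZ,Arazy-Cwikel} and reduces everything to an inequality between $\mathcal K$-functionals of the three couples, using $\mathcal K(t,x;\ell^p,\ell^q)\approx \mathcal P_p\widetilde x(t^\alpha)+t\mathcal Q_q\widetilde x(t^\alpha)$ and the corresponding formulas for the other two couples.

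The main obstacle, I expect, is precisely the passage that does \emph{not} go through $\mathcal K$-monotonicity directly --- namely the range $q<1$, where $(\ell^p,\ell^q)$ fails the Calder\'on--Mityagin property, so the class $Int(\ell^p,\ell^q)$ is strictly larger than the $\mathcal K$-monotone spaces and cannot be characterized by a $\mathcal K$-functional inequality alone. Here Corollary~\ref{triv cor} (when $q<1$) and Theorem~\ref{Th-bounded-lp} should carry the weight: they let one replace $\ell^q$ by $\ell^1$ in the operator-extension step, reducing the $q<1$ case to a Banach-range situation where orbit/$\mathcal K$-monotonicity machinery applies, after which one restricts back down to $\ell^q$. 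Thus the proof likely bifurcates: for $q\ge 1$ use the $\mathcal K$-divisibility / Holmstedt-operator computation via Theorem~\ref{p3}; for $q<1$ combine Corollary~\ref{triv cor}, Theorem~\ref{Th-bounded-lp}, and the already-established case to transfer the result. Checking that the two halves of the splitting of $Tx$ really land in the two respective orbits \emph{with quasi-norm estimates that survive the non-convexity of $\ell^q$-quasi-norms for $q<1$} (the constant $\max(1,2^{(1-q)/q})$ in the quasi-triangle inequality) is the delicate bookkeeping I would budget the most care for.
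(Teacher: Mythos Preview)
Your proposal is essentially correct and follows the same route as the paper. The paper packages the argument as two separate pieces: Proposition~\ref{orbits}, which establishes the orbit decomposition ${\mathrm{Orb}}(x;\ell^{p},\ell^{q})={\mathrm{Orb}}(x;\ell^{s},\ell^{q})+{\mathrm{Orb}}(x;\ell^{p},\ell^{r})$ (exactly your splitting $Tx=u+v$), and then an abstract Bykov--Ovchinnikov-style lemma (Proposition~\ref{B-O}) that converts such an orbit decomposition into the equality of interpolation classes. The orbit decomposition is proved precisely as you anticipate: for $q\ge 1$ one passes through the $\mathcal{K}$-functional (Corollary~\ref{Corr-1}) and invokes Theorem~\ref{p3} to get $y=T_{0}x+T_{1}x$ with $T_{0}\in\mathfrak{L}(\ell^{0},\ell^{q})$, $T_{1}\in\mathfrak{L}(\ell^{p},\ell^{\infty})$, then interpolates to land in $\mathfrak{L}(\ell^{s},\ell^{q})$ and $\mathfrak{L}(\ell^{p},\ell^{r})$; for $q<1$ one uses Corollary~\ref{Corr-1} (built on Theorem~\ref{Th-bounded-lp}) to replace $\ell^{q}$ by $\ell^{1}$ and reduce to the already-settled case.

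One small point: your final worry about quasi-norm bookkeeping for $q<1$ is unnecessary. You only need to show that every $T\in\mathfrak{L}(\ell^{p},\ell^{q})$ maps $X$ into $X$ as a set; the uniform norm bound then comes for free from the Closed Graph Theorem argument in the preliminaries (which holds for quasi-Banach spaces). The paper's Proposition~\ref{B-O} accordingly makes no quantitative claim.
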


Further, in Section~\ref{fails CM}, we prove that the couple $
\left(\ell^{p},\ell^{q}\right)$ fails to have the Calder\'{o}n-Mityagin
property if $0\leq p<q<1$. Thus, according to Theorem \ref{int spaces},
Arazy-Cwikel type results do not imply, in general, that the corresponding
couples possess necessarily the Calder\'{o}n-Mityagin property.

The proof of Theorem \ref{int spaces} will be based on using a series of
auxiliary results, the first of them is well-known. 

\begin{proposition}
\cite[Theorem~3]{Cwikel1} \label{p1} Let $0<p<\infty$, and let $
x=\left(x_{k}\right)_{k=1}^{\infty}\in\ell^{\infty}$, $y=\left(y_{k}
\right)_{k=1}^{\infty}\in\ell^{\infty}$ be two sequences satisfying 
\begin{equation*}
\sum_{k=1}^{n}\left(y_{k}^{\ast}\right)^{p}\leq\sum_{k=1}^{n}\left(x_{k}^{
\ast}\right)^{p},\;\;n=1,2,\dots 
\end{equation*}
Then there exists a linear operator $T:\,\ell^{\infty}\rightarrow\ell^{
\infty}$ such that $\left\Vert T\right\Vert
_{\ell^{p}\rightarrow\ell^{p}}\leq8^{1/p}$, $\left\Vert T\right\Vert
_{\ell^{\infty}\rightarrow\ell^{\infty}}\leq2^{1/p}$ and $Tx=y$.
\end{proposition}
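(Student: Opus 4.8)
\textbf{Proof plan for Proposition~\ref{p1}.}
The plan is to reduce the statement, via the Holmstedt-type formulas of
Subsection~\ref{Holmstedt}, to a combinatorial/discrete construction of an
explicit operator $T$ built from weighted shifts and averaging blocks. First
I would reformulate the hypothesis: setting $a_n:=\sum_{k=1}^n (x_k^\ast)^p$
and $b_n:=\sum_{k=1}^n (y_k^\ast)^p$, the assumption is $b_n\le a_n$ for all
$n$, and by \eqref{Holmsteds_formula4a} this is equivalent, up to the fixed
constant $C_{p,\infty}$, to
${\mathcal K}(n^{1/p},y;\ell^p,\ell^\infty)\le C\,{\mathcal K}(n^{1/p},x;\ell^p,\ell^\infty)$.
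Since we may replace $x$ and $y$ by their decreasing rearrangements $x^\ast$,
$y^\ast$ (composing with the coordinate permutations, which are isometries on
both $\ell^p$ and $\ell^\infty$), it suffices to produce $T$ when both
sequences are already non-increasing and non-negative; the desired operator
for the original $x,y$ is then obtained by pre- and post-composition with
permutation operators, which do not change either operator norm.

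The core construction I would carry out is the following. Partition
$\mathbb N$ into consecutive blocks so that on each block the partial sums
$a_n$ roughly double; concretely, let $n_0=0$ and
$n_{j+1}:=\min\{n: a_n\ge 2 a_{n_j}\}$. On the $j$-th block I would let
$T$ send the corresponding coordinates of $x$ into the coordinates of $y$
indexed by the same block by an averaging-type map: roughly, $(Tx)_i$ for
$i$ in block $j$ is a suitably weighted average of the $x_k$ with
$k\le n_{j+1}$, with weights chosen so that $(Tx)_i=y_i$ and so that the
$\ell^\infty$ and $\ell^p$ norms of each block-row are controlled. The
hypothesis $b_n\le a_n$ is exactly what guarantees that the ``mass'' $y$
needs on block $j$ is available among the first $n_{j+1}$ entries of $x$, and
the doubling of $a_n$ across blocks is what keeps the bookkeeping from losing
more than an absolute constant: summing the $p$-th powers of the block
contributions telescopes against a geometric series, which produces the
explicit constants $8^{1/p}$ for $\ell^p\to\ell^p$ and $2^{1/p}$ for
$\ell^\infty\to\ell^\infty$. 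Verifying that $Tx$ is indeed exactly $y$ (not
just comparable) requires choosing the weights as a genuine ``transport''
scheme from the decreasing sequence $(x_k^\ast)^p$ to $(y_k^\ast)^p$ within
each block, using $b_n\le a_n$ termwise after the block decomposition.

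The step I expect to be the main obstacle is the simultaneous control of the
two operator norms by the stated sharp-looking constants $8^{1/p}$ and
$2^{1/p}$: it is easy to build \emph{some} $T$ with $Tx=y$ bounded on both
endpoints, but getting these particular constants forces a careful choice of
the block-doubling scheme and of the averaging weights, and one must check
that the $\ell^\infty$ bound (which controls $\sup_i (Tx)_i$ by a weighted
average of the $x_k$, hence by $x_1^\ast=\|x\|_{\ell^\infty}$ times a factor)
and the $\ell^p$ bound do not degrade each other. Since this is quoted from
\cite[Theorem~3]{Cwikel1}, I would either reproduce that argument or, if a
self-contained treatment is wanted, set it up as a transference from the
corresponding statement for $(L^p,L^\infty)$ on $(0,\infty)$ via
\eqref{k-functional-lp-Lp}, where the analogous construction with genuine
integrals is slightly cleaner, and then restrict back to the integer lattice.
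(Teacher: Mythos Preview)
The paper does not actually prove this proposition: it is quoted verbatim from \cite[Theorem~3]{Cwikel1} and used as a black box (in Theorem~\ref{p3}, Corollary~\ref{Ar-Cw}, and Proposition~\ref{L3}). So there is no in-paper argument to compare your plan against; your sketch should be measured against Cwikel's original 1981 proof, whose overall architecture---reduce to non-negative non-increasing data, then build $T$ by a block/doubling transport scheme---is indeed what you describe.

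One step in your outline is not correct as stated. You write that the reduction to $x=x^\ast$, $y=y^\ast$ is achieved ``composing with coordinate permutations, which are isometries on both $\ell^p$ and $\ell^\infty$.'' For general $x\in\ell^\infty\setminus c_0$ the decreasing rearrangement $x^\ast$ is \emph{not} a permutation of $|x|$: e.g.\ $x=(1,0,1,0,1,0,\dots)$ has $x^\ast=(1,1,1,\dots)$, and no bijection of $\mathbb N$ sends one to the other. The reduction in \cite{Cwikel1} instead produces, for each $x$, an operator $R$ (a sign change composed with a one-to-one coordinate selection, not a bijection) with $Rx$ pointwise comparable to $x^\ast$, together with an analogous operator for $y$ in the reverse direction; these are contractions on $\ell^\infty$ and on $\ell^p$, but the comparison $Rx\asymp x^\ast$ rather than equality is where a first constant enters. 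Once this is fixed, your block-doubling/transport description matches the mechanism that yields the stated constants: the $\ell^\infty$ bound comes from each output coordinate being a weighted combination with total weight at most $2^{1/p}$, and the $\ell^p$ bound telescopes geometrically across the doubling blocks to give $8^{1/p}$.
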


\begin{proposition}
\label{p2} Let $1\le q<\infty$, $C\ge1$, and let $x=\left(x_{k}
\right)_{k=1}^{\infty}\in\ell^{q}$, $y=\left(y_{k}\right)_{k=1}^{\infty}\in
\ell^{q}$ be two sequences such that 
\begin{equation*}
\sum_{k=n}^{\infty}\left(y_{k}^{*}\right)^{q}\le
C\sum_{k=[(n-1)/C]+1}^{\infty}\left(x_{k}^{*}\right)^{q},\;\;n=1,2,\dots 
\end{equation*}
Then there exists a linear operator $S:\,\ell^{q}\rightarrow\ell^{q}$ such that $
\left\Vert S\right\Vert _{\ell^{0}\to\ell^{0}}\le9(C+1)$, $\left\Vert
S\right\Vert _{\ell^{q}\to\ell^{q}}\le6(C+1)$ and $Sx=y$.
\end{proposition}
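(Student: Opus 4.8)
\textbf{Proof proposal for Proposition \ref{p2}.}

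The plan is to build the operator $S$ as a composition of two pieces: a ``transport'' map that pushes the mass of $x$ onto a stretched index set, followed by a simple averaging/selection operator that reconstitutes $y$. The driving idea is the Holmstedt-type description in \eqref{EQ13ddd}: the quantities $(\mathcal{Q}_q z)_n = \bigl(\sum_{k\ge n}(z_k^*)^q\bigr)^{1/q}$ are essentially the ${\mathcal E}$-functionals of $z$ in the couple $(\ell^0,\ell^q)$, so the hypothesis says precisely that ${\mathcal E}(n,y;\ell^0,\ell^q)^q$ is controlled by a dilated copy of ${\mathcal E}(\cdot,x;\ell^0,\ell^q)^q$. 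Since $q\ge1$, we are in the Banach range and can afford to work with genuine linear operators and norm estimates rather than homomorphisms only. Without loss of generality I would assume $x$ and $y$ are already nonincreasing nonnegative sequences (replacing $x$ by $x^*$ and $y$ by $y^*$ only costs permutation operators of norm $1$ on both $\ell^0$ and $\ell^q$, which can be absorbed at the end), so that the hypothesis reads $\sum_{k\ge n} y_k^q \le C\sum_{k> [(n-1)/C]} x_k^q$ for all $n$.

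The core construction: partition $\mathbb N$ into consecutive blocks $\Delta_j$ whose sizes are governed by the dilation factor $C$ (roughly, the $j$-th block of the target corresponds to a block of comparable length in the source, in the spirit of the $[(n-1)/C]$ shift). On each block, compare the tail sums; the hypothesis guarantees that the ``energy'' $\sum_{k\in\Delta_j} y_k^q$ available at stage $j$ is dominated, up to the factor $C$, by source energy coming from an earlier (larger-index) portion of $x$. I would then define $S$ block-by-block: on the span of the coordinates of $x$ feeding block $\Delta_j$, let $S$ act by a rank-one-ish map of the form $S e_k = \theta_k \sum_{i\in\Delta_j} c_i e_i$ with scalars chosen so that (a) $S$ applied to $x$ reproduces $y$ on $\Delta_j$, and (b) the $\ell^q$ and $\ell^0$ operator norms on that block stay bounded by absolute constants times $(C+1)$. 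Condition (b) is where one uses $q\ge 1$ (convexity / Hölder to bound $\|Sz\|_{\ell^q}$ by $\|z\|_{\ell^q}$ on each block) and the finite overlap of source and target blocks (to bound the increase of support, giving the $\ell^0\to\ell^0$ estimate). Summing the block contributions — using that distinct blocks occupy disjoint coordinates, so the global norm is the supremum (for $\ell^\infty$-type bookkeeping) or an $\ell^q$-sum of block norms — yields the stated constants $9(C+1)$ and $6(C+1)$; the exact numerical constants will come out of the averaging estimate on a single block, analogous to how the $8^{1/p}$ and $2^{1/p}$ arise in Proposition \ref{p1}.

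The main obstacle I anticipate is the careful bookkeeping of \emph{which} source coordinates feed \emph{which} target block so that: the total source energy assigned is never exceeded (so the scalars $\theta_k,c_i$ are genuinely $\le1$ in the relevant sense), the $\ell^0$ support does not blow up by more than a factor proportional to $C+1$, and — most delicately — the $\ell^q$ norm bound is uniform. The hypothesis only gives $\sum_{k\ge n}y_k^q \le C\sum_{k>[(n-1)/C]}x_k^q$ with the \emph{same} fixed $C$ for all $n$, which is a statement about tails, not about individual blocks, so extracting per-block domination requires a telescoping/summation-by-parts argument: one should take differences of consecutive tail inequalities to control $\sum_{k\in\Delta_j}y_k^q$ in terms of $\sum_{k\in\Delta_j'}x_k^q$ for a suitable shifted block $\Delta_j'$, and verify the $\Delta_j'$ can be chosen with bounded overlap. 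I would also need the elementary fact (valid since $q\ge1$) that an operator which, on each coordinate, spreads mass over finitely many coordinates with $\ell^1$-normalized coefficients and acts ``block-diagonally,'' has $\ell^q\to\ell^q$ norm bounded by a constant depending only on the maximal block overlap; this replaces the role played by $0<p\le1$ subadditivity in the proof of Lemma \ref{Lemma_bounded_lp}. Once the single-block estimates are in place, assembling $S$ and tracking the constants is routine.
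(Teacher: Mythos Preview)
Your approach is genuinely different from the paper's, and the sketch as written has a real gap.

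\textbf{What the paper does.} The paper does not build $S$ by any direct block construction. Instead it reduces to $q=1$: the map $(z_k)\mapsto(|z_k|^q)$ carries the hypothesis into the $\ell^1$-tail inequality $\sum_{k\ge n}(y_k^*)^q \le C\sum_{k>[(n-1)/C]}(x_k^*)^q$ for the sequences $(x_k^q)$ and $(y_k^q)$. Then it invokes \cite[Theorem~1]{Ast-20} to obtain a \emph{positive} linear $Q:\ell^1\to\ell^1$ with $Q((x_k^q))=(y_k^q)$ and the stated $\ell^0$ and $\ell^1$ bounds $9(C+1)$, $6(C+1)$. From $Q$ it builds the sublinear, positively homogeneous map $Q'(z):=\bigl(Q(|z|^q)\bigr)^{1/q}$ on $\ell^q$, and finally uses the Hahn--Banach--Kantorovich theorem to extend the rank-one map $\alpha x\mapsto\alpha y$ to a linear $S$ dominated by $Q'$. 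The condition $q\ge1$ enters precisely here (so that $Q'$ is subadditive and Hahn--Banach--Kantorovich applies), and the specific constants are inherited verbatim from \cite{Ast-20}.

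\textbf{The gap in your plan.} Your ``telescoping/summation-by-parts'' step does not work as stated. From $a_n\le b_n$ and $a_m\le b_m$ (with $a_n=\sum_{k\ge n}y_k^q$, $b_n=C\sum_{k>[(n-1)/C]}x_k^q$) you cannot conclude $a_n-a_m\le b_n-b_m$; the inequalities go the wrong way under subtraction. So differencing tail bounds does \emph{not} yield per-block domination $\sum_{k\in\Delta_j}y_k^q\lesssim\sum_{k\in\Delta_j'}x_k^q$. A direct construction (this is essentially what \cite{Ast-20} does for $q=1$) requires instead an adaptive choice of block endpoints driven by when partial sums of $x$ catch up with those of $y$, not a fixed dyadic/arithmetic partition followed by differencing. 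Without that, the scalars $\theta_k,c_i$ you want need not be bounded, and neither the $\ell^q$ nor the $\ell^0$ estimate follows. Even granting that such an adaptive construction can be made to work, you would be reproving the content of \cite{Ast-20} and would still need a linearization device to pass from the resulting positive/sublinear map to a genuine linear $S$; the paper's use of Hahn--Banach--Kantorovich is exactly that device and is the reason the exact constants $9(C+1)$, $6(C+1)$ survive unchanged.
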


\begin{proof}
Without loss of generality, we can assume that $x=\left(x_{n}\right)_{n=1}^{
\infty}$ and $y=\left(y_{n}\right)_{n=1}^{\infty}$ are nonnegative and nonincreasing.

By \cite[Theorem~1; see also its proof]{Ast-20}, we can find a positive
linear operator $Q:\,\ell^{1}\rightarrow\ell^{1}$ such that $\left\Vert
Q\right\Vert _{\ell^{0}\to\ell^{0}}\le9(C+1)$, $\left\Vert Q\right\Vert
_{\ell^{1}\to\ell^{1}}\le6(C+1)$ and $Q((x_{k}^{q})_{k=1}^{
\infty})=((y_{k}^{q})_{k=1}^{\infty})$. Let us define the mapping $Q^{\prime}
$ by 
\begin{equation*}
Q^{\prime}((z_{k})_{k=1}^{\infty}):=\left(Q((|z_{k}|^{q})_{k=1}^{\infty})
\right)^{1/q}. 
\end{equation*}
Then, $Q^{\prime}$ is a subadditive and positively homogeneous operator
bounded on $\ell^{q}$ such that $\left\Vert Q^{\prime}\right\Vert
_{\ell^{0}\to\ell^{0}}\le\left\Vert Q\right\Vert _{\ell^{0}\to\ell^{0}}$, $
\left\Vert Q^{\prime}\right\Vert _{\ell^{q}\to\ell^{q}}\le\left\Vert
Q\right\Vert _{\ell^{1}\to\ell^{1}}$ and $Q^{\prime}x=y$.

Now, let us define the linear operator $S^{\prime}$ on the one-dimensional
subspace $H_{x}:=\{\alpha x,\,\alpha\in\mathbb{R}\}$ in $\ell^{q}$ by $
S^{\prime}(\alpha x):=\alpha y$, $\alpha\in\mathbb{R}$. Then, $
S^{\prime}z\le Q^{\prime}z$, $z\in H_{x}$, and $S^{\prime}x=y$. Hence, by
the Hahn-Banach-Kantorovich theorem (see e.g. \cite[p.~120]{Rubinov}), there exists a linear extension $S$ of $S^{\prime}$ to the whole of $\ell^{q}$
such that $Sz\le Q^{\prime}z$ for all $z\in\ell^{q}$. Since the operator $S$
satisfies all the requirements, the proof is completed.
\end{proof}

\begin{remark}
Alternatively, instead of Theorem~1 from \cite{Ast-20}, in the proof of
Proposition \ref{p2} one can apply Lemma~5.2 from \cite{CSZ} together with using the dilation operators (see \eqref{extra8}).
\end{remark}

\begin{theorem}
\label{p3} Suppose $0<p<q<\infty$ and $q\ge 1$. Let $x=\left(x_{k}\right)_{k=1}^{\infty}
\in\ell^{q}$ and $y=\left(y_{k}\right)_{k=1}^{\infty}\in\ell^{q}$ be two
sequences such that 
\begin{equation*}
\left(\sum_{k=1}^{n}\left(y_{k}^{\ast}\right)^{p}\right)^{1/p}+n^{1/\alpha}
\left(\sum_{k=n}^{\infty}\left(y_{k}^{\ast}\right)^{q}\right)^{1/q}\leq
\left(\sum_{k=1}^{n}\left(x_{k}^{\ast}\right)^{p}\right)^{1/p}+n^{1/\alpha}
\left(\sum_{k=n}^{\infty}\left(x_{k}^{\ast}\right)^{q}\right)^{1/q},\;\;n\in
\mathbb{N}, 
\end{equation*}
where $1/\alpha=1/p-1/q$.

Then, we can find linear operators $T:\,\ell^{\infty}\rightarrow\ell^{\infty}
$ and $S:\,\ell^{q}\rightarrow\ell^{q}$ such that $\left\Vert T\right\Vert _{
\mathfrak{L}(\ell^{p},\ell^{\infty})}\leq8^{1/p}$, $\left\Vert S\right\Vert
_{\mathfrak{L}(\ell^{0},\ell^{q})}\leq 18$, with $y=Tx+Sx$.
\end{theorem}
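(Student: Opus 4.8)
The plan is to decompose the ``budget'' on the right-hand side of the hypothesis into a piece governed by the $\mathcal{P}_p$-part and a piece governed by the $\mathcal{Q}_q$-part, and to realize $y$ as a sum of two images $Tx$ and $Sx$, where $T$ handles the $\mathcal{P}_p$/$\ell^\infty$ behaviour and $S$ handles the $\mathcal{Q}_q$/$\ell^0$ behaviour. Recall from \eqref{Holmsteds_formula4} that the left side of the inequality in the statement is, up to the constant $C_{p,q}$, just ${\mathcal{K}}(n^{1/\alpha},y;\ell^p,\ell^q)$, and similarly for $x$; so the hypothesis says essentially ${\mathcal{K}}(t,y)\le C\,{\mathcal{K}}(t,x)$ along the sequence $t=n^{1/\alpha}$, which by concavity gives it for all $t>0$. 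The first step is therefore to split $y$ coordinate-wise: writing $y=(y_k)$ in nonincreasing order (as we may, since passing to rearrangements costs only a permutation, which is an isometry of every $\ell^p$), set $y=u+v$ where $u$ ``captures the large coordinates'' and $v$ ``captures the tail''. Concretely I would choose a threshold index for each dyadic block and define $u_k=y_k$ when $y_k^p$ is not dominated by the corresponding $\mathcal{Q}_q$-tail of $x$ and $v_k=y_k$ otherwise; the goal is to arrange
\begin{equation*}
\Big(\sum_{k=1}^n (u_k^\ast)^p\Big)^{1/p}\le c\Big(\sum_{k=1}^n(x_k^\ast)^p\Big)^{1/p}
\quad\text{and}\quad
n^{1/\alpha}\Big(\sum_{k=n}^\infty (v_k^\ast)^q\Big)^{1/q}\le c\,n^{1/\alpha}\Big(\sum_{k=n}^\infty(x_k^\ast)^q\Big)^{1/q}
\end{equation*}
with an absolute (or $p$-dependent) constant $c$.

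Once such a splitting is in hand, I would apply Proposition~\ref{p1} to the pair $(x,u)$: since $\sum_{k=1}^n (u_k^\ast)^p\le \sum_{k=1}^n(x_k^\ast)^p$ (after absorbing $c$ by scaling, or by re-deriving the constant), Proposition~\ref{p1} produces $T:\ell^\infty\to\ell^\infty$ with $\|T\|_{\ell^p\to\ell^p}\le 8^{1/p}$, $\|T\|_{\ell^\infty\to\ell^\infty}\le 2^{1/p}$ and $Tx=u$; this gives $\|T\|_{\mathfrak L(\ell^p,\ell^\infty)}\le 8^{1/p}$ as required. For the tail part I would apply Proposition~\ref{p2} to the pair $(x,v)$. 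For this I must verify a hypothesis of the form $\sum_{k=n}^\infty (v_k^\ast)^q\le C\sum_{k=[(n-1)/C]+1}^\infty (x_k^\ast)^q$ with $C$ an absolute constant; the shift $[(n-1)/C]+1$ is exactly the slack one gains from comparing ${\mathcal{K}}$-functionals at the points $n^{1/\alpha}$ versus $(n/C)^{1/\alpha}$ and is why the factor $n^{1/\alpha}$ in the hypothesis is crucial rather than a nuisance. Proposition~\ref{p2} then yields $S:\ell^q\to\ell^q$ with $\|S\|_{\ell^0\to\ell^0}\le 9(C+1)$, $\|S\|_{\ell^q\to\ell^q}\le 6(C+1)$ and $Sx=v$; choosing $C$ small enough (the numerology should give $C$ close to $1$ or $2$) makes $\|S\|_{\mathfrak L(\ell^0,\ell^q)}\le 18$. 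Adding, $Tx+Sx=u+v=y$, and we are done.

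The main obstacle will be the construction and verification of the splitting $y=u+v$ so that \emph{both} comparison inequalities hold simultaneously with controlled constants. The hypothesis gives control only on the \emph{sum} $\mathcal{P}_p y + \mathcal{Q}_q y$ in terms of $\mathcal{P}_p x+\mathcal{Q}_q x$; to split this cleanly I expect to work block by block along a geometric (dyadic) partition of the index set, using on each block the crossover point where $x$'s $\mathcal{P}_p$-term and $\mathcal{Q}_q$-term are comparable, and exploiting the fact that for $q\ge 1$ the function $t\mapsto t^{q}$ and the tail sums behave subadditively in the way needed. The role of the assumption $q\ge 1$ is precisely here (and implicitly in Proposition~\ref{p2}, whose proof uses the Hahn--Banach--Kantorovich theorem on $\ell^q$ as a normed lattice): it lets us manipulate $\ell^q$-norms of tails with triangle-type inequalities without picking up powers of the (possibly large) number of blocks. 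A secondary technical point is bookkeeping the constants so that the final bound is exactly $\|S\|_{\mathfrak L(\ell^0,\ell^q)}\le 18$; this will force a specific choice of the dyadic ratio and of $C$ in the application of Proposition~\ref{p2}, and I would fix those parameters only at the end after the qualitative splitting is established.
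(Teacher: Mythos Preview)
Your overall strategy---split $y$ into two pieces and hit them separately with Proposition~\ref{p1} and Proposition~\ref{p2}---is exactly what the paper does. What is missing is the \emph{specific} splitting, and your proposed dyadic-block construction is both vague and unlikely to yield the stated constants.

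The paper's splitting is much cleaner than anything block-wise. Assume $x,y$ nonnegative and nonincreasing, set
\[
A(n)=\sum_{k=1}^{n}(x_k^p-y_k^p),\qquad B(n)=\sum_{k=n}^{\infty}(x_k^q-y_k^q),
\]
and let $A=\{n:A(n)\ge 0\}$, $B=\{n:B(n)\ge 0\}$. The hypothesis immediately gives $A\cup B=\mathbb{N}$: if both $A(n)<0$ and $B(n)<0$ at some $n$, the two summands on the left strictly exceed those on the right. Now write $A=\bigcup_i A_i$ and $B=\bigcup_i B_i$ as unions of maximal integer intervals. On each $A_i=[n_i,m_i]$ the boundary condition $A(n_i-1)\le 0$ (and $A(\cdot)\ge 0$ inside) forces
\[
\sum_{j=1}^{l}\bigl((\chi_{A_i}y)_j^\ast\bigr)^p\le \sum_{j=1}^{l}\bigl((\chi_{A_i}x)_j^\ast\bigr)^p\quad\text{for all }l,
\]
with constant exactly $1$; likewise on each $B_i$ the tail inequality holds with constant $1$. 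So Proposition~\ref{p1} gives $T_i$ on each $A_i$ and Proposition~\ref{p2} (applied with $C=1$, yielding the bound $9(C+1)=18$) gives $S_i$ on each $B_i$; the global operators are $T=\sum_i\chi_{A_i}T_i\chi_{A_i}$ and $S=\chi_{B\setminus A}\sum_i\chi_{B_i}S_i\chi_{B_i}$, and $y=\chi_A y+\chi_{B\setminus A}y=Tx+Sx$.

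The point you are missing is this maximal-interval trick: it converts the additive hypothesis into \emph{exact} (constant~$1$) local inequalities of the right shape, so Propositions~\ref{p1} and~\ref{p2} apply verbatim and the constants $8^{1/p}$ and $18$ fall out without any tuning. Your dyadic scheme would at best produce inequalities with a constant $c>1$, and then Proposition~\ref{p2} with $C=c$ gives $9(c+1)>18$; there is no ``absorbing $c$ by scaling'' available here because the operator norms must meet the stated bounds. Also, the role of $q\ge 1$ is solely inside Proposition~\ref{p2} (Hahn--Banach--Kantorovich on the normed lattice $\ell^q$), not in the splitting.
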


\begin{proof}
As above, we can (and will) assume that $x=\left(x_{n}\right)_{n=1}^{\infty}$
and $y=\left(y_{n}\right)_{n=1}^{\infty}$ are nonnegative and nonincreasing sequences. The proof below will be modelled on the arguments used in \cite
{Arazy-Cwikel}.

Let us define 
\begin{equation*}
A\left(n\right):=\sum_{k=1}^{n}\left(x_{k}^{p}-y_{k}^{p}\right),\;n\in
\mathbb{N},\;\;\mbox{and}\;\;A:=\left\{ n\in\mathbb{N}:\,A\left(n\right)
\geq0\right\} , 
\end{equation*}
\begin{equation*}
B\left(n\right):=\sum_{k=n}^{\infty}\left(x_{k}^{q}-y_{k}^{q}\right),\;n\in
\mathbb{N},\;\;\mbox{and}\;\;B:=\left\{ n\in\mathbb{N}:\,B\left(n\right)
\geq0\right\} . 
\end{equation*}
Then, from the assumption of the theorem it follows that $A\cup B=\mathbb{N}$.

Observe that in the case when $A=\mathbb{N}$ it follows 
\begin{equation*}
\sum_{k=1}^{n}y_{k}^{p}\leq\sum_{k=1}^{n}x_{k}^{p},\;\;n\in\mathbb{N}, 
\end{equation*}
and hence Proposition \ref{p1} implies that $y=Tx$ for some operator $
T:\ell^{\infty}\to\ell^{\infty}$ bounded in the couple $\left(\ell^{p},
\ell^{\infty}\right).$ Similarly, if $B=\mathbb{N}$ then 
\begin{equation*}
\sum_{k=n}^{\infty}y_{k}^{q}\leq\sum_{k=n}^{\infty}x_{k}^{q},\;\;n\in\mathbb{
N}. 
\end{equation*}
Consequently, since $q\ge 1$, by Proposition \ref{p2}, $y=Sx$ for some $S:\,\ell^{q}\to\ell^{q}$ bounded in the couple $\left(\ell^{0},\ell^{q}\right)$. So, in these cases the desired result follows.

Assume now that neither $A=\mathbb{N}$ nor $B=\mathbb{N}$. We represent $A$
as the union of successive maximal pairwise disjoint intervals of positive
integers, i.e., $A=\cup_{i\in I_{1}}A_{i}$, where $A_{i}=\left[n_{i},m_{i}
\right]$, $m_{i}+1<n_{i+1}$. Let $B=\cup_{i\in I_{2}}B_{i}$ be the
corresponding union for $B.$ These collections of intervals may be finite or
infinite.

Let $1\in A$, i.e., $A_{1}=[1,m_{1}]$. Then, $A\left(\min\left(l,m_{1}
\right)\right)\ge0$ for every $l\in\mathbb{N}$, and hence we have 
\begin{eqnarray*}
\sum_{j=1}^{l}\left(\left(\chi_{A_{1}}x\right)_{j}^{\ast}\right)^{p} & = &
\sum_{j=1}^{\min\left(l,m_{1}\right)}x_{j}^{p}=A\left(\min\left(l,m_{1}
\right)\right)+\sum_{j=1}^{\min\left(l,m_{1}\right)}y_{j}^{p} \\
& \ge &
\sum_{j=1}^{\min\left(l,m_{1}\right)}y_{j}^{p}=\sum_{j=1}^{l}\left(\left(
\chi_{A_{1}}y\right)_{j}^{\ast}\right)^{p}.
\end{eqnarray*}

Suppose now that $A_{i}=\left[n_{i},m_{i}\right]$, where $n_{i}\ge2$, be a
finite interval. Then, $\min\left(l+n_{i}-1,m_{i}\right)\in A$, $n_{i}-1\in
A^{c}:=\mathbb{N}\setminus A$ for all $l\in\mathbb{N}$. Therefore, $A\left(\min
\left(l+n_{i}-1,m_{i}\right)\right)\geq0$, $A\left(n_{i}-1\right)\leq0$ and 
\begin{eqnarray*}
\sum_{j=1}^{l}\left(\left(\chi_{A_{i}}x\right)_{j}^{\ast}\right)^{p} & = &
\sum_{j=n_{i}}^{l+n_{i}-1}\left(\chi_{A_{i}}x\right)_{j}^{p}=
\sum_{j=n_{i}}^{\min\left(l+n_{i}-1,m_{i}\right)}x_{j}^{p} \\
& = &
A\left(\min\left(l+n_{i}-1,m_{i}\right)\right)-A\left(n_{i}-1\right)+
\sum_{j=n_{i}}^{\min\left(l+n_{i}-1,m_{i}\right)}y_{j}^{p} \\
& \geq &
\sum_{j=n_{i}}^{\min\left(l+n_{i}-1,m_{i}\right)}y_{j}^{p}=\sum_{j=1}^{l}
\left(\left(\chi_{A_{i}}y\right)_{j}^{\ast}\right)^{p}.
\end{eqnarray*}

If finally $A_{i}=\left[n_{i},\infty\right)$ then, for any $l\in\mathbb{N}$,
we have $n_{i}+l-1\in A_{i}$ and $n_{i}-1\in A^{c}.$ Hence, as above, 
\begin{eqnarray*}
\sum_{j=1}^{l}\left(\left(\chi_{A_{i}}x\right)_{j}^{\ast}\right)^{p} & = &
\sum_{j=n_{i}}^{n_{i}+l-1}x_{j}^{p}=A\left(n_{i}+l-1\right)-A\left(n_{i}-1
\right)+\sum_{j=n_{i}}^{n_{i}+l-1}y_{j}^{p} \\
& \geq &
\sum_{j=n_{i}}^{n_{i}+l-1}y_{j}^{p}=\sum_{j=1}^{l}\left(\left(\chi_{A_{i}}y
\right)_{j}^{\ast}\right)^{p}.
\end{eqnarray*}

Thus, by the estimates obtained and Proposition \ref{p1}, for each $i\in
I_{1}$, we can select a linear operator $T_{i}:\,\ell^{\infty}\rightarrow
\ell^{\infty}$, $\left\Vert T_{i}\right\Vert _{\mathfrak{L}
(\ell^{p},\ell^{\infty})}\leq8^{1/p}$, with $T_{i}(\chi_{A_{i}}x)=
\chi_{A_{i}}y$. Now setting $T=\sum_{i\in I_{1}}\chi_{A_{i}}T_{i}\chi_{A_{i}}
$, we see that the operator $T$ is well-defined on $\ell^{\infty}$, 
\begin{equation*}
\left\Vert T\right\Vert _{\mathfrak{L}(\ell^{p},\ell^{\infty})}\leq\sup_{i
\in I_{1}}\left\Vert T_{i}\right\Vert _{\mathfrak{L}(\ell^{p},\ell^{
\infty})}\le8^{1/p} 
\end{equation*}
and $T(\chi_{A}x)=\chi_{A}y$.

Similarly, let $B_{i}=\left[n_{i}^{\prime},m_{i}^{\prime}\right]$ be a
finite interval, and let $l\in\mathbb{N}$ be such that $l\leq
m_{i}^{\prime}-n_{i}^{\prime}+1$. Then, $l+n_{i}^{\prime}-1\in B$ and $
m_{i}^{^{\prime}}+1\in B^{c}$. Consequently, $B(l+n_{i}^{\prime}-1)\geq0$
and $B(m_{i}^{\prime}+1)\leq0$. Hence, 
\begin{eqnarray*}
\sum_{j=l}^{\infty}\left(\left(\chi_{B_{i}}x\right)_{j}^{\ast}\right)^{q} &
= &
\sum_{j=l+n_{i}^{\prime}-1}^{m_{i}^{\prime}}x_{j}^{q}=B(l+n_{i}^{
\prime}-1)-B(m_{i}^{\prime}+1) \\
& + &
\sum_{j=l+n_{i}^{\prime}-1}^{m_{i}^{\prime}}y_{j}^{q}\geq\sum_{j=l+n_{i}^{
\prime}-1}^{m_{i}^{\prime}}y_{j}^{q}=\sum_{j=l}^{\infty}\left(\left(
\chi_{B_{i}}y\right)_{j}^{\ast}\right)^{q}.
\end{eqnarray*}
Observe that the latter inequality holds also for $l>m_{i}^{\prime}-n_{i}^{
\prime}+1$, because in this case its both sides vanish.

In the case when $B_{i}=\left[n_{i}^{\prime},\infty\right)$ for every $l\in
\mathbb{N}$ we get 
\begin{eqnarray*}
\sum_{j=l}^{\infty}\left(\left(\chi_{B_{i}}x\right)_{j}^{\ast}\right)^{q} &
= &
\sum_{j=n_{i}^{\prime}+l-1}^{\infty}x_{j}^{q}=B\left(n_{i}^{\prime}+l-1
\right)+\sum_{j=n_{i}+l-1}^{\infty}y_{j}^{q} \\
& \geq &
\sum_{j=n_{i}+l-1}^{\infty}y_{j}^{q}=\sum_{j=l}^{\infty}\left(\left(
\chi_{B_{i}}y\right)_{j}^{\ast}\right)^{q},
\end{eqnarray*}
because of $n_{i}^{\prime}+l-1\in B_{i}$ for all $l\in\mathbb{N}$.

As a result, by using Proposition \ref{p2}, for every $i\in I_{2}$ we can
find an operator $S_{i}:\,\ell^{q}\rightarrow\ell^{q}$, $\|S_{i}\|_{
\mathfrak{L}(\ell^{0},\ell^{q})}\le 18$, with $S_{i}(\chi_{B_{i}}x)=
\chi_{B_{i}}y.$ Then, the operator $S^{\prime}:=\sum_{i=1}^{\infty}
\chi_{B_{i}}S_{i}\chi_{B_{i}}$ is well-defined on $\ell^{q}$, 
\begin{equation*}
\left\Vert S^{\prime}\right\Vert _{\mathfrak{L}(\ell^{0},\ell^{q})}\leq
\sup_{i\in I_{2}}\left\Vert S_{i}\right\Vert _{\mathfrak{L}
(\ell^{0},\ell^{q})}\le 18 
\end{equation*}
and $S^{\prime}(\chi_{B}x)=\chi_{B}y$. Denoting $S:=\chi_{B\setminus
A}S^{\prime}$, we see that $\left\Vert S\right\Vert _{\mathfrak{L}
(\ell^{0},\ell^{q})}\leq 18$, and $Sx=\chi_{B\setminus A}y$. Since 
\begin{equation*}
y=\chi_{A}y+\chi_{B\setminus A}y=Tx+Sx, 
\end{equation*}
the operators $T:\,\ell^{\infty}\rightarrow\ell^{\infty}$ and $
S:\,\ell^{q}\rightarrow\ell^{q}$ satisfy all the requirements and so the
proof of the theorem is completed.
\end{proof}

It is a classical result of the interpolation theory that the couple $
\left(L^{p},L^{q}\right)$, $1\leq p<q\leq\infty$, has the uniform Calder\'{o}n-Mityagin property with respect to the class of all interpolation Banach spaces (note that it is also a special case of the well-known Sparr theorem, see \cite{SP78}). The preceding results of this section combined
with the observations made in Subsection \ref{Prel-sequence-spaces} imply the following extension of this result to the quasi-Banach case in the sequence space setting.

\begin{corollary}
\label{Ar-Cw} Let $0\le p<q\leq\infty$ and $q\geq1.$ Then $
\left(\ell^{p},\ell^{q}\right)$ is a uniform Calder\'{o}n-Mityagin couple.
\end{corollary}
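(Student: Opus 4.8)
\textbf{Proof proposal for Corollary \ref{Ar-Cw}.}

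The plan is to deduce the uniform Calder\'{o}n-Mityagin property of $(\ell^p,\ell^q)$ for $q\ge 1$ directly from Theorem \ref{p3}, reducing the general case to the case $0<p$ and $q<\infty$ via Holmstedt's formula, and disposing of the boundary exponents $p=0$ and $q=\infty$ by separate elementary arguments. First I would handle the main range $0<p<q<\infty$ with $q\ge 1$. Fix a nonzero $x\in\ell^p+\ell^q=\ell^q$ and suppose $y\in\ell^q$ satisfies ${\mathcal K}(t,y;\ell^p,\ell^q)\le {\mathcal K}(t,x;\ell^p,\ell^q)$ for all $t>0$; it suffices, up to a uniform constant, to produce operators whose sum sends $x$ to $y$. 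Using the Holmstedt equivalence \eqref{Holmsteds_formula4}, the hypothesis on ${\mathcal K}$-functionals evaluated at $t=n^{1/\alpha}$ translates (losing only the universal constant $C_{p,q}$ on one side) into an inequality of the form
\begin{equation*}
(\mathcal{P}_{p}y)_{n}+n^{1/\alpha}(\mathcal{Q}_{q}y)_{n}\le C_{p,q}\bigl((\mathcal{P}_{p}x)_{n}+n^{1/\alpha}(\mathcal{Q}_{q}x)_{n}\bigr),\qquad n\in\mathbb{N}.
\end{equation*}
Absorbing the constant $C_{p,q}$ by replacing $x$ with a suitable scalar multiple (and noting $\mathcal{P}_p,\mathcal{Q}_q$ are positively homogeneous) puts us exactly in the hypothesis of Theorem \ref{p3}, which then yields $T\in\mathfrak{L}(\ell^p,\ell^\infty)$ and $S\in\mathfrak{L}(\ell^0,\ell^q)$ with $y=Tx+Sx$ and with quasi-norms bounded by absolute constants. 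Since $\ell^p\overset{1}{\subset}\ell^q$ when $p<q$ we have $\mathfrak{L}(\ell^p,\ell^\infty)\subset\mathfrak{L}(\ell^p,\ell^q)$ with control of quasi-norms, and likewise $\mathfrak{L}(\ell^0,\ell^q)$ restricted to $\ell^q$ lands in $\mathfrak{L}(\ell^p,\ell^q)$ after rescaling (indeed any bounded linear map on $\ell^q$ that is a bounded homomorphism of $\ell^0$ is automatically bounded on the intermediate $\ell^p$, by interpolation $\ell^p\in Int(\ell^0,\ell^q)$); hence $T+S\in\mathfrak{L}(\ell^p,\ell^q)$ with uniformly bounded quasi-norm, and $(T+S)x=y$. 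Because the ${\mathcal K}$-monotonicity constant estimate in Definition \ref{def:CalderonMityaginCouple} only requires a uniform constant (quasi-Banach additivity of the quasi-norm on $\mathfrak{L}(\vec X)$ being harmless), this establishes the uniform Calder\'{o}n-Mityagin property in this range.

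Next I would treat the case $q=\infty$ (so $0<p<\infty$, automatically $q\ge1$). Here Proposition \ref{p1} already does essentially all the work: if ${\mathcal K}(t,y;\ell^p,\ell^\infty)\le{\mathcal K}(t,x;\ell^p,\ell^\infty)$ for all $t>0$, then by the Holmstedt equivalence \eqref{Holmsteds_formula4a} evaluated at $t=n^{1/p}$ we obtain $\sum_{k=1}^n(y_k^*)^p\le C_{p,\infty}^p\sum_{k=1}^n(x_k^*)^p$ for all $n$, and after rescaling $x$, Proposition \ref{p1} produces a single $T$ with $\|T\|_{\ell^p\to\ell^p}\le 8^{1/p}$, $\|T\|_{\ell^\infty\to\ell^\infty}\le 2^{1/p}$ and $Tx=y$. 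This gives the uniform property for $(\ell^p,\ell^\infty)$, $0<p<\infty$. The case $p=0$ with $1\le q<\infty$ is analogous but uses Proposition \ref{p2} in place of Theorem \ref{p3}: the ${\mathcal K}$-functional for $(\ell^0,\ell^q)$ is controlled via the ${\mathcal E}$-functional formula \eqref{EQ13ddd} together with the implications \eqref{impl1}--\eqref{impl2}, so a ${\mathcal K}$-domination ${\mathcal K}(t,y;\ell^0,\ell^q)\le{\mathcal K}(t,x;\ell^0,\ell^q)$ yields, for some absolute constant $C$, a tail inequality $\sum_{k=n}^\infty(y_k^*)^q\le C\sum_{k=[(n-1)/C]+1}^\infty(x_k^*)^q$ of the exact shape required by Proposition \ref{p2}, which then furnishes $S\in\mathfrak{L}(\ell^0,\ell^q)$ with $Sx=y$ and uniformly bounded quasi-norm. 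Finally the degenerate boundary case $p=0$, $q=\infty$ is covered by the remark in Subsection \ref{Prel-sequence-spaces} that every symmetric quasi-Banach sequence space is an interpolation space with respect to $(\ell^0,\ell^\infty)$; more directly, ${\mathcal E}(t,x;\ell^0,\ell^\infty)=x_{[t]+1}^*$, so ${\mathcal K}$-domination is equivalent (via \eqref{impl1}--\eqref{impl2}) to $y_n^*\le C x_{[n/C]}^*$, and a dilation-and-rearrangement operator of the type built from \eqref{extra8} maps $x$ to $y$ with absolute bounds on $\ell^0$ and $\ell^\infty$.

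The main obstacle, and the only nonroutine point, is bookkeeping of constants so as to genuinely get the \emph{uniform} conclusion rather than just the plain Calder\'{o}n-Mityagin property: one must check that the Holmstedt constants $C_{p,q}$, $C_{p,\infty}$ enter only multiplicatively and can be absorbed into an overall rescaling of $x$, that Theorem \ref{p3} already provides \emph{absolute} bounds $8^{1/p}$ and $18$ independent of $y$, and that passing from $\mathfrak{L}(\ell^p,\ell^\infty)$ and $\mathfrak{L}(\ell^0,\ell^q)$ to $\mathfrak{L}(\ell^p,\ell^q)$ via the norm-one inclusions $\ell^p\subset\ell^q$, resp. the interpolation inclusion $\ell^p\in Int(\ell^0,\ell^q)$ (whose interpolation constant, by Remark 2.2 of \cite{CSZ} cited above, is finite and absolute once $p,q$ are fixed), costs only a further constant factor. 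Everything else is the direct substitution of Holmstedt's formula into the hypotheses of Propositions \ref{p1}, \ref{p2} and Theorem \ref{p3}, together with the trivial observation that the sum of two operators with uniformly bounded quasi-norms in the relevant spaces again has a uniformly bounded quasi-norm in $\mathfrak{L}(\ell^p,\ell^q)$ (the quasi-norm constant of that space being fixed).
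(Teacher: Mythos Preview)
Your proposal is correct and follows essentially the same four-case route as the paper's proof (using Proposition~\ref{p1} for $q=\infty$, Proposition~\ref{p2} for $p=0<q<\infty$, Theorem~\ref{p3} for the main range, and an explicit dilation-plus-multiplier construction for $p=0$, $q=\infty$). One small slip: the inclusion $\ell^p\subset\ell^q$ does \emph{not} by itself give $\mathfrak{L}(\ell^p,\ell^\infty)\subset\mathfrak{L}(\ell^p,\ell^q)$; what you need (and what the paper uses) is that $\ell^q\in Int(\ell^p,\ell^\infty)$, so that $T$ bounded on $\ell^p$ and $\ell^\infty$ is bounded on $\ell^q$ by interpolation---exactly the argument you correctly give for $S$ via $\ell^p\in Int(\ell^0,\ell^q)$.
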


\begin{proof}
Let $x=\left(x_{k}\right)_{k=1}^{\infty}\in\ell^{q}$ and $
y=\left(y_{k}\right)_{k=1}^{\infty}\in\ell^{q}$ be two nonincreasing and nonnegative sequences such that 
\begin{equation}
{\mathcal{K}}(t,y;\ell^{p},\ell^{q})\le{\mathcal{K}}(t,x;\ell^{p},\ell^{q}),
\;\;t>0.  \label{equ1}
\end{equation}
We consider four cases depending on values of the numbers $p$ and $q$
separately.

Let first $0<p<q=\infty$. Then, by \eqref{Holmsteds_formula4a}, we have 
\begin{equation*}
\sum_{k=1}^{n}\left(y_{k}^{\ast}\right)^{p}\leq
C_{p,\infty}^{p}\sum_{k=1}^{n}\left(x_{k}^{\ast}\right)^{p},\;\;n=1,2,\dots 
\end{equation*}
Hence, from Proposition \ref{p1} it follows the existence of a linear
operator $T:\,\ell^{\infty}\rightarrow\ell^{\infty}$ such that $\left\Vert
T\right\Vert _{\mathfrak{L}(\ell^{p},\ell^{\infty})}\leq8^{1/p}C_{p,\infty}$
and $Tx=y$. Thus, $y\in{\mathrm{Orb}}(x;\ell^{p},\ell^{\infty})$ and $\Vert
y\Vert_{{\mathrm{Orb}}(x)}\leq8^{1/p}C_{p,\infty}$.

Suppose now $p=0$ and $q=\infty$. 
Then from \eqref{impl2}, \eqref{EQ13ddd} and the inequality $[k/2]+1\ge [(k+1)/2]$, $k=1,2,\dots$, it follows that 
\begin{equation*}
y_{k}\le2x_{[k/2]+1}\le 2x_{[(k+1)/2]}=2(D_{2}x)_{k},\;\;k=1,2,\dots, 
\end{equation*}
where $D_{2}$ is the doubling operator (see \eqref{extra8}). For each $u=(u_{k})_{k=1}^{\infty}\in\ell^{\infty}$ we define
the multiplication operator $T$ by 
\begin{equation*}
Tu=\left(u_{k}\cdot\frac{y_{k}}{2(D_{2}x)_{k}}\right)_{k=1}^{\infty}. 
\end{equation*}
Obviously, $\|T\|_{\mathfrak{L}(\ell^{0},\ell^{\infty})}\le1$. Therefore, if 
$S:=2TD_{2}$, then $\|S\|_{\mathfrak{L}(\ell^{0},\ell^{\infty})}\le2$ and $
Sx=y$. Thus, $y\in{\mathrm{Orb}}(x;\ell^{0},\ell^{\infty})$ and $\Vert
y\Vert_{{\mathrm{Orb}}(x)}\leq2$.

Next, if $0=p<q<\infty$, from estimate \eqref{equ1}, implication 
\eqref{impl2} with $X_{0}=\ell^{0}$, $X_{1}=\ell^{q}$ and formula 
\eqref{EQ13ddd} it follows that 
\begin{equation*}
\sum_{k=n}^{\infty}\left(y_{k}^{*}\right)^{q}\le2\sum_{k=[(n-1)/2]+1}^{\infty}\left(x_{k}^{*}\right)^{q},\;\;n=1,2,\dots 
\end{equation*}
Then, by Proposition \ref{p2}, there exists a linear operator $
S:\,\ell^{q}\rightarrow\ell^{q}$ such that $\left\Vert S\right\Vert _{
\mathfrak{L}(\ell^{0},\ell^{q})}\le27$ and $Sx=y$. Therefore, we obtain
again that $y\in{\mathrm{Orb}}(x;\ell^{0},\ell^{q})$ and $\|y\|_{{\mathrm{Orb
}}}\le27$.

Finally, suppose $0<p<q<\infty$. Then, combining \eqref{equ1} with the
Holmstedt formula \eqref{Holmsteds_formula4} and applying Theorem \ref{p3},
we can find linear operators $T:\,\ell^{\infty}\rightarrow\ell^{\infty}$ and 
$S:\,\ell^{q}\rightarrow\ell^{q}$ such that $\left\Vert T\right\Vert _{
\mathfrak{L}(\ell^{p},\ell^{\infty})}\leq8^{1/p}C_{p,q}$ and $\left\Vert
S\right\Vert _{\mathfrak{L}(\ell^{0},\ell^{q})}\leq 18C_{p,q}$, with $y=Tx+Sx$. Since by interpolation $T$ (resp. $S$) is bounded in $\ell^{q}$ (resp. $
\ell^{p}$) and $\Vert T\Vert_{\ell^{q}\rightarrow\ell^{q}}\leq
C_{p,q}^{\prime}$ (resp. $\Vert S\Vert_{\ell^{p}\rightarrow\ell^{p}}\leq
C_{p,q}^{\prime\prime}$), we conclude that $y\in{\mathrm{Orb}}
(x;\ell^{p},\ell^{q})$ and $\Vert y\Vert_{{\mathrm{Orb}}(x)}\leq
C_{p,q}^{\prime}+C_{p,q}^{\prime\prime}$. Thus, the theorem is proved.
\end{proof}

\begin{remark}
According to \cite[Theorem~1.1]{CSZ}, the result, analogous to Theorem \ref{p3}, holds for the couple $\left(L^{p},L^{q}\right)$ of functions defined on the semi-axis $(0,\infty)$ with the Lebesgue measure for all
$0\le p<q\leq\infty$ without any extra conditions imposed on $q$. Consequently, arguing in the same way as in the proof of Corollary \ref{Ar-Cw}, we conclude that the couple $\left(L^{p},L^{q}\right)$ is a uniform Calder\'{o}n-Mityagin couple for all
$0\le p<q\leq\infty$. At the same time, it is worth to note that the condition $q\geq1$ cannot be skipped in Theorem \ref{p3} and Corollary \ref{Ar-Cw}  (see Corollary \ref{th2} below), which shows an essential difference in interpolation properties of quasinormed $L^p$-couples in  function and sequence cases.  
\end{remark}

Applying Corollary \ref{Ar-Cw} and Theorem~\ref{Th-bounded-lp}, we obtain a complete description of orbits of elements in the couple $(\ell^{p},\ell^{q})$ for all nonnegative values $p$ and $q$.

\begin{corollary}
\label{Corr-1} Let $0\leq p<q\leq\infty.$ For every $x\in\ell^{q}$ 

(a) if $q\geq1$, then ${\mathrm{Orb}}\left(x;\ell^{p},\ell^{q}\right)={\ {{
\mathcal{K}}-{\mathrm{Orb}}}}(x;\ell^{p},\ell^{q})$;

(a) if $q<1$, then ${\mathrm{Orb}}\left(x;\ell^{p},\ell^{q}\right)={\mathrm{
Orb}}\left(x;\ell^{p},\ell^{1}\right)={{\mathcal{K}}-{\mathrm{Orb}}}
(x;\ell^{p},\ell^{1})$.

{Moreover, the quasi-norms of the above spaces are equivalent with constants
independent of $x\in \ell ^{q}$.}
\end{corollary}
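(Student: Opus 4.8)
The plan is to prove Corollary~\ref{Corr-1} by combining the orbit-description machinery with the two main structural facts already established: Corollary~\ref{Ar-Cw} (which handles $q\ge 1$) and Theorem~\ref{Th-bounded-lp} together with Corollary~\ref{triv cor} (which transfers everything from $q<1$ up to the level $r=1$).

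First, consider the case $q\ge 1$. We always have the trivial inclusion ${\mathrm{Orb}}(x;\ell^{p},\ell^{q})\subset {\mathcal{K}}\text{-}{\mathrm{Orb}}(x;\ell^{p},\ell^{q})$ with constant $1$, as noted just before Definition~\ref{def:CalderonMityaginCouple}. For the reverse inclusion, take $y\in {\mathcal{K}}\text{-}{\mathrm{Orb}}(x;\ell^{p},\ell^{q})$, so that ${\mathcal{K}}(t,y;\ell^{p},\ell^{q})\le C\,{\mathcal{K}}(t,x;\ell^{p},\ell^{q})$ for all $t>0$ with $C=\|y\|_{{\mathcal{K}}\text{-}{\mathrm{Orb}}(x)}$. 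Since $q\ge 1$, Corollary~\ref{Ar-Cw} says $(\ell^{p},\ell^{q})$ is a uniform Calder\'{o}n--Mityagin couple, so there is $T\in \mathfrak{L}(\ell^{p},\ell^{q})$ with $y=Tx$ and $\|T\|_{\mathfrak{L}(\ell^{p},\ell^{q})}\le C'C$ for an absolute constant $C'$; replacing $x$ and $y$ by their nonincreasing rearrangements is harmless here (cf.\ the reduction at the start of the proof of Corollary~\ref{Ar-Cw}). This gives $y\in {\mathrm{Orb}}(x;\ell^{p},\ell^{q})$ with $\|y\|_{{\mathrm{Orb}}(x)}\le C'\|y\|_{{\mathcal{K}}\text{-}{\mathrm{Orb}}(x)}$, establishing (a) with norm equivalence constants independent of $x$.

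Next, the case $q<1$. The key observation is that the $\mathcal{K}$-functionals coincide: for $0\le p<q<1$ one has $\ell^{p}+\ell^{q}=\ell^{q}=\ell^{p}+\ell^{1}$ as sets (any $\ell^{q}$ sequence is automatically in $\ell^{1}$ when $q<1$), and moreover ${\mathcal{K}}(t,z;\ell^{p},\ell^{q})$ and ${\mathcal{K}}(t,z;\ell^{p},\ell^{1})$ are comparable, with constants independent of $z$ — this is a routine consequence of the embedding $\ell^{q}\overset{1}{\subset}\ell^{1}$ together with the Holmstedt-type formulas \eqref{Holmsteds_formula4} applied for both exponent pairs (for $\|z\|_{\ell^q}$ and $\|z\|_{\ell^1}$ the partial sums $(\mathcal{Q}_q z)_n$ and $(\mathcal{Q}_1 z)_n$ are comparable coordinatewise up to a constant depending only on $p,q$). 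Hence ${\mathcal{K}}\text{-}{\mathrm{Orb}}(x;\ell^{p},\ell^{q})={\mathcal{K}}\text{-}{\mathrm{Orb}}(x;\ell^{p},\ell^{1})$ with equivalent quasi-norms. On the orbit side, Theorem~\ref{Th-bounded-lp} (with $r=1$) shows that every $S\in\mathfrak{L}(\ell^{p},\ell^{q})$ extends to $R\in\mathfrak{L}(\ell^{p},\ell^{1})$ with $\|R\|_{\mathfrak{L}(\ell^{p},\ell^{1})}\le\|S\|_{\mathfrak{L}(\ell^{p},\ell^{q})}$, and conversely the restriction to $\ell^{q}$ of any $R\in\mathfrak{L}(\ell^{p},\ell^{1})$ lies in $\mathfrak{L}(\ell^{p},\ell^{q})$ with no increase of norm (since $\ell^{q}\subset\ell^{1}$ isometrically); so for $x\in\ell^{q}$ the sets $\{Rx: R\in\mathfrak{L}(\ell^{p},\ell^{1})\}$ and $\{Sx: S\in\mathfrak{L}(\ell^{p},\ell^{q})\}$ coincide with equal orbit quasi-norms, i.e.\ ${\mathrm{Orb}}(x;\ell^{p},\ell^{q})={\mathrm{Orb}}(x;\ell^{p},\ell^{1})$. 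Finally, since $1\ge 1$, part (a) already applied to the couple $(\ell^{p},\ell^{1})$ gives ${\mathrm{Orb}}(x;\ell^{p},\ell^{1})={\mathcal{K}}\text{-}{\mathrm{Orb}}(x;\ell^{p},\ell^{1})$ with equivalent quasi-norms; chaining the three identifications yields (b).

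The routine-but-not-entirely-trivial point — and the one I would expect to need the most care — is the comparability ${\mathcal{K}}(t,z;\ell^{p},\ell^{q})\asymp{\mathcal{K}}(t,z;\ell^{p},\ell^{1})$ for $q<1$, since a priori the $\mathcal{K}$-functional for the pair with the \emph{smaller} second exponent $q$ dominates the one with exponent $1$ only in one direction; one must use the Holmstedt descriptions \eqref{Holmsteds_formula4} to see that the tail sums $\bigl(\sum_{k\ge n}(z_k^{\ast})^{q}\bigr)^{1/q}$ and $\sum_{k\ge n}z_k^{\ast}$ control each other up to a $p,q$-dependent constant after the change of variable $t=n^{1/\alpha}$ (the relevant $\alpha$ differing between the two pairs, which is exactly where the Holmstedt substitution does the work). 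Everything else is bookkeeping with the extension theorem and the definitions of the two kinds of orbits. Note also the harmless typo in the statement: part (b) is labelled ``(a)'' a second time.
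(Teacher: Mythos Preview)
Your argument for part (a) is correct and matches the paper's approach (direct application of Corollary~\ref{Ar-Cw}).

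For part (b), however, your ``key observation'' that ${\mathcal{K}}(t,z;\ell^{p},\ell^{q})\asymp{\mathcal{K}}(t,z;\ell^{p},\ell^{1})$ is \emph{false}, and in fact its falsity is one of the main points of the paper. If the $\mathcal{K}$-orbits for $(\ell^{p},\ell^{q})$ and $(\ell^{p},\ell^{1})$ coincided, then combining this with the other two identifications you correctly establish would give ${\mathrm{Orb}}(x;\ell^{p},\ell^{q})={\mathcal{K}}\text{-}{\mathrm{Orb}}(x;\ell^{p},\ell^{q})$ for every $x$, i.e.\ $(\ell^{p},\ell^{q})$ would be a Calder\'on--Mityagin couple when $q<1$ --- directly contradicting Theorem~\ref{Th:No-CM-Property}. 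Concretely, the tail sums $\bigl(\sum_{k\ge n}(z_k^{\ast})^{q}\bigr)^{1/q}$ and $\sum_{k\ge n}z_k^{\ast}$ are \emph{not} two-sidedly comparable (take $z=\sum_{k=1}^{N}e_k$ to get ratio $N^{1/q-1}$), and no change of the time variable can repair this, since a reparametrisation common to all $z$ would preserve $\mathcal{K}$-orbits.

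Fortunately this false claim is unnecessary: part (b) asserts only the chain ${\mathrm{Orb}}(x;\ell^{p},\ell^{q})={\mathrm{Orb}}(x;\ell^{p},\ell^{1})={\mathcal{K}}\text{-}{\mathrm{Orb}}(x;\ell^{p},\ell^{1})$, with no mention of ${\mathcal{K}}\text{-}{\mathrm{Orb}}(x;\ell^{p},\ell^{q})$. Your remaining two steps suffice, and this is exactly the paper's intended argument (Corollary~\ref{Ar-Cw} plus Theorem~\ref{Th-bounded-lp}). One small correction there too: the embedding $\ell^{q}\subset\ell^{1}$ is contractive, not isometric, and the reason the restriction of $R\in\mathfrak{L}(\ell^{p},\ell^{1})$ is bounded on $\ell^{q}$ is that $\ell^{q}\in Int(\ell^{p},\ell^{1})$, not the embedding itself; this gives a bound by a $(p,q)$-constant rather than ``no increase of norm'', which is all that is needed.
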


Now, we are able to prove the following additivity property for orbits of
elements with respect to the couples $\left(\ell^{p},\ell^{q}\right)$, $
0\leq p<q\leq\infty$ (cf. \cite{BO06}).

\begin{proposition}
\label{orbits} Let $0\leq s<p<q<r\leq\infty.$ Then, for each $
x\in\ell^{p}+\ell^{q}$ we have 
\begin{equation*}
{\mathrm{Orb}}\left(x;\ell^{p},\ell^{q}\right)={\mathrm{Orb}}
\left(x;\ell^{s},\ell^{q}\right)+{\mathrm{Orb}}\left(x;\ell^{p},\ell^{r}\right),
\end{equation*}
and the quasinorms of these spaces are equivalent with a constant independent of $x$.
\end{proposition}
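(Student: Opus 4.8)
The plan is to deduce the orbit decomposition from Theorem \ref{p3} (in the range $q\ge 1$) and from Corollary \ref{Corr-1} together with the extension Theorem \ref{Th-bounded-lp} (in the range $q<1$), handling the cases $q\ge 1$ and $q<1$ separately. The inclusion $\mathrm{Orb}(x;\ell^{s},\ell^{q})+\mathrm{Orb}(x;\ell^{p},\ell^{r})\subseteq\mathrm{Orb}(x;\ell^{p},\ell^{q})$ is the easy direction: if $T\in\mathfrak{L}(\ell^{s},\ell^{q})$ then by interpolation (the space $\ell^{p}$ lies between $\ell^{s}$ and $\ell^{q}$, see \eqref{Holmsteds_formula4} and the remarks on $\ell^{r}$ being a real $K$-interpolation space) $T$ is bounded on $\ell^{p}$, so $T\in\mathfrak{L}(\ell^{p},\ell^{q})$ with norm controlled by a constant times $\|T\|_{\mathfrak{L}(\ell^{s},\ell^{q})}$; similarly any $S\in\mathfrak{L}(\ell^{p},\ell^{r})$ is bounded on $\ell^{q}$ and hence lies in $\mathfrak{L}(\ell^{p},\ell^{q})$. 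Adding $Tx$ and $Sx$ gives an element of $\mathrm{Orb}(x;\ell^{p},\ell^{q})$ with quasi-norm bounded by a constant multiple of $\|Tx\|_{\mathrm{Orb}(x;\ell^{s},\ell^{q})}+\|Sx\|_{\mathrm{Orb}(x;\ell^{p},\ell^{r})}$.

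For the reverse inclusion when $q\ge 1$, fix $y\in\mathrm{Orb}(x;\ell^{p},\ell^{q})$, so $y=Ux$ for some $U\in\mathfrak{L}(\ell^{p},\ell^{q})$; by Corollary \ref{Corr-1}(a) this is the same as $y\in{\mathcal{K}}\text{-}\mathrm{Orb}(x;\ell^{p},\ell^{q})$, i.e.\ ${\mathcal{K}}(t,y;\ell^{p},\ell^{q})\le C\,{\mathcal{K}}(t,x;\ell^{p},\ell^{q})$ for all $t>0$. Using the Holmstedt formula \eqref{Holmsteds_formula4} we pass to the equivalent discrete inequality
\begin{equation*}
\Big(\sum_{k=1}^{n}(y_{k}^{\ast})^{p}\Big)^{1/p}+n^{1/\alpha}\Big(\sum_{k=n}^{\infty}(y_{k}^{\ast})^{q}\Big)^{1/q}\le C'\Big[\Big(\sum_{k=1}^{n}(x_{k}^{\ast})^{p}\Big)^{1/p}+n^{1/\alpha}\Big(\sum_{k=n}^{\infty}(x_{k}^{\ast})^{q}\Big)^{1/q}\Big],\quad n\in\mathbb{N}.
\end{equation*}
Absorbing the constant $C'$ into $x$ (replacing $x$ by $C'x$, which only changes orbit quasi-norms by a fixed factor), Theorem \ref{p3} supplies $T\in\mathfrak{L}(\ell^{p},\ell^{\infty})$ and $S\in\mathfrak{L}(\ell^{0},\ell^{q})$ with $y=Tx+Sx$. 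Since $T$ is also bounded on $\ell^{r}$ (interpolating $\ell^{p}$ and $\ell^{\infty}$, as $p<r\le\infty$) we get $Tx\in\mathrm{Orb}(x;\ell^{p},\ell^{r})$, and since $S$ is bounded on $\ell^{s}$ (for $s>0$ this is real-$K$ interpolation between $\ell^{0}$ and $\ell^{q}$; for $s=0$ it is immediate) we get $Sx\in\mathrm{Orb}(x;\ell^{s},\ell^{q})$. This yields the desired decomposition with control of quasi-norms. The case $q=\infty$ is subsumed here using the $\ell^{p},\ell^{\infty}$ variant of the Holmstedt formula \eqref{Holmsteds_formula4a} and Proposition \ref{p1}.

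It remains to treat $q<1$, where Theorem \ref{p3} is unavailable. Here $s<p<q<r$ with $q<1$; note $r$ may be $\ge 1$ or $<1$. By Corollary \ref{Corr-1}(b), $\mathrm{Orb}(x;\ell^{p},\ell^{q})=\mathrm{Orb}(x;\ell^{p},\ell^{1})$ (with equivalent quasi-norms), and likewise $\mathrm{Orb}(x;\ell^{s},\ell^{q})=\mathrm{Orb}(x;\ell^{s},\ell^{1})$. For the term $\mathrm{Orb}(x;\ell^{p},\ell^{r})$, if $r\ge 1$ we keep it as is, and if $r<1$ we replace it via Corollary \ref{Corr-1}(b) by $\mathrm{Orb}(x;\ell^{p},\ell^{\max(r,1)})=\mathrm{Orb}(x;\ell^{p},\ell^{1})$ when $p\ge$\dots — more carefully, since $p<q<1$ the relevant substitution is $\mathrm{Orb}(x;\ell^{p},\ell^{r})=\mathrm{Orb}(x;\ell^{p},\ell^{1})$ when $r<1$. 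Thus in all sub-cases the proposed identity reduces to
\begin{equation*}
\mathrm{Orb}(x;\ell^{p},\ell^{\tilde q})=\mathrm{Orb}(x;\ell^{s},\ell^{\tilde q})+\mathrm{Orb}(x;\ell^{p},\ell^{\tilde r})
\end{equation*}
with $\tilde q=1$ (or $\ge 1$) and $\tilde r=\max(r,1)\ge 1$, which is an instance of the already-established case $q\ge 1$ (note $s<p<\tilde q<\tilde r$ still holds, interpreting the case $\tilde r=\tilde q=1$ trivially by $\mathrm{Orb}(x;\ell^{p},\ell^{1})\supseteq\mathrm{Orb}(x;\ell^{s},\ell^{1})$). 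Finally, to go back from $\ell^{1}$-orbits to the original $\ell^{q}$, $\ell^{r}$-orbits, one invokes Theorem \ref{Th-bounded-lp}: any operator in $\mathfrak{L}(\ell^{s},\ell^{q})$ extends to $\mathfrak{L}(\ell^{s},\ell^{1})$ and conversely restricts, with comparable norms, and similarly for the other two couples. The main obstacle is organizing the bookkeeping of the several sub-cases according to the relative positions of $s,p,q,r$ with respect to $1$, and checking at each step that the interpolation-constant factors stay bounded uniformly in $x$; the analytic heart, however, is entirely contained in Theorem \ref{p3} (for $q\ge1$) and Theorem \ref{Th-bounded-lp} (for $q<1$), so no genuinely new estimate is needed.
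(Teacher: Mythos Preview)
Your argument is correct and follows essentially the same route as the paper: the easy inclusion by interpolation, then for $q\ge 1$ the combination of Corollary~\ref{Corr-1}(a), the Holmstedt formula, and Theorem~\ref{p3} to split $y=Tx+Sx$ with $T\in\mathfrak{L}(\ell^{p},\ell^{\infty})$ and $S\in\mathfrak{L}(\ell^{0},\ell^{q})$, and for $q<1$ a reduction via Corollary~\ref{Corr-1}(b) to the already-established case.

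A few minor points. First, the remark about ``the case $q=\infty$'' is superfluous: the hypothesis $q<r\le\infty$ forces $q<\infty$. Second, your final sentence about invoking Theorem~\ref{Th-bounded-lp} to ``go back'' is redundant, since Corollary~\ref{Corr-1}(b) already gives you equalities of orbits (with equivalent quasi-norms), not just inclusions. Third, the paper's treatment of the $q<1$ case is a bit cleaner than yours: instead of splitting into sub-cases according to whether $r<1$ or $r\ge 1$, it applies the $q\ge 1$ result once with the fixed choice $(\tilde q,\tilde r)=(1,\infty)$, obtaining
\[
\mathrm{Orb}(x;\ell^{p},\ell^{q})=\mathrm{Orb}(x;\ell^{p},\ell^{1})=\mathrm{Orb}(x;\ell^{s},\ell^{1})+\mathrm{Orb}(x;\ell^{p},\ell^{\infty}),
\]
and then observes that $\mathrm{Orb}(x;\ell^{s},\ell^{1})\subset\mathrm{Orb}(x;\ell^{s},\ell^{q})$ and $\mathrm{Orb}(x;\ell^{p},\ell^{\infty})\subset\mathrm{Orb}(x;\ell^{p},\ell^{r})$ by interpolation. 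This avoids the degenerate case $\tilde q=\tilde r$ entirely.
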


\begin{proof}
Observe that, by interpolation, with a constant independent of $x$ it follows 
\begin{equation*}
{\mathrm{Orb}}\left(x;\ell^{p},\ell^{q}\right)\supset{\mathrm{Orb}}
\left(x;\ell^{s},\ell^{q}\right)+{\mathrm{Orb}}\left(x;\ell^{p},\ell^{r}\right). 
\end{equation*}
Therefore, it remains to prove the opposite embedding.

Assume first that $q\geq1$. Then, if $y\in{\mathrm{Orb}}\left(x;\ell^{p},
\ell^{q}\right)$, from Corollary \ref{Corr-1} it follows that $y\in{{
\mathcal{K}}-{\mathrm{Orb}}}\left(x;\ell^{p},\ell^{q}\right)$. Then,
according to Theorem \ref{p3}, we may write $y=T_{0}x+T_{1}x$, where $
T_{0}\in\mathfrak{L}(\ell^{0},\ell^{q})$ and $T_{1}\in\mathfrak{L}
(\ell^{p},\ell^{\infty})$. Hence, again by interpolation $
T_{0}:\,\ell^{s}\rightarrow\ell^{s}$, $T_{1}:\,\ell^{r}\rightarrow\ell^{r}.$
This yields $T_{0}x\in{\mathrm{Orb}}\left(x;\ell^{s},\ell^{q}\right)$ and $
T_{1}x\in{\mathrm{Orb}}\left(x;\ell^{p},\ell^{r}\right)$. Thus, with a constant independent of $x$
\begin{equation*}
{\mathrm{Orb}}\left(x;\ell^{p},\ell^{q}\right)\subset{\mathrm{Orb}}
\left(x;\ell^{s},\ell^{q}\right)+{\mathrm{Orb}}\left(x;\ell^{p},\ell^{r}
\right), 
\end{equation*}
and in this case the result follows.

Let now $q<1.$ Applying successively Corollary \ref{Corr-1} and the above
result for the couple $(\ell^{p},\ell^{1})$ and the numbers $s<p<1<\infty$,
we get 
\begin{eqnarray*}
{\mathrm{Orb}}\left(x;\ell^{p},\ell^{q}\right) & = & {\mathrm{Orb}}
\left(x;\ell^{p},\ell^{1}\right) \\
& = & {\mathrm{Orb}}\left(x;\ell^{s},\ell^{1}\right)+{\mathrm{Orb}}
\left(x;\ell^{p},\ell^{\infty}\right) \\
& \subset & {\mathrm{Orb}}\left(x;\ell^{s},\ell^{q}\right)+{\mathrm{Orb}}
\left(x;\ell^{p},\ell^{r}\right),
\end{eqnarray*}
and everything is done.
\end{proof}

In the proof of the next result we follow the reasoning used in the paper 
\cite{BO06}.

\begin{proposition}
\label{B-O} Let $\left(X_{0},X_{3}\right)$ and $\left(X_{1},X_{2}\right)$ be
two couples of quasi-Banach Abelian groups such that $X_{2}\in
Int\left(X_{0},X_{3}\right)$ and $X_{3}\in Int\left(X_{1},X_{2}\right)$.
Moreover, assume that 
\begin{equation}
\label{equa1}
X_{2}\cap X_{3}\subseteq(X_{0}\cap X_{3})+(X_{1}\cap
X_{2}),\;\;(X_{0}+X_{3})\cap(X_{1}+X_{2})\subseteq X_{2}+X_{3}, 
\end{equation}
and for every $x\in X_{2}+X_{3}$
\begin{equation}
\label{equa2}
{\mathrm{Orb}}\left(x;X_{2},X_{3}\right)={\mathrm{Orb}}\left(x;X_{0},X_{3}
\right)+{\mathrm{Orb}}\left(x;X_{1},X_{2}\right).
\end{equation}
Then, 
\begin{equation*}
Int\left(X_{2},X_{3}\right)=Int\left(X_{0},X_{3}\right)\cap
Int\left(X_{1},X_{2}\right). 
\end{equation*}
\end{proposition}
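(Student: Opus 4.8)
The plan is to prove the two set inclusions separately. The inclusion $Int(X_2,X_3)\subseteq Int(X_0,X_3)\cap Int(X_1,X_2)$ will use only the hypotheses $X_2\in Int(X_0,X_3)$ and $X_3\in Int(X_1,X_2)$, whereas the reverse inclusion will rely on the orbit identity \eqref{equa2}, with \eqref{equa1} needed to verify that the resulting embeddings are genuine intermediate embeddings. Throughout, any set-theoretic inclusion between two intermediate spaces (or groups) attached to a common couple is automatically a continuous embedding, by the closed graph theorem for $F$-spaces recalled in Section~\ref{Prel-Interpolation}; I would use this without further comment. It is convenient to record first two consequences of the hypotheses: since $X_2\in Int(X_0,X_3)$, every $T\in\mathfrak{L}(X_0,X_3)$ is bounded on $X_2$ and on $X_3$, so $\mathfrak{L}(X_0,X_3)\subseteq\mathfrak{L}(X_2,X_3)$, and symmetrically $\mathfrak{L}(X_1,X_2)\subseteq\mathfrak{L}(X_2,X_3)$; moreover, $X_2$ being intermediate for $(X_0,X_3)$ gives $X_0\cap X_3\subseteq X_2\cap X_3$ and $X_2+X_3\subseteq X_0+X_3$, and likewise $X_1\cap X_2\subseteq X_2\cap X_3$ and $X_2+X_3\subseteq X_1+X_2$.

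For ``$\subseteq$'', let $E\in Int(X_2,X_3)$. The chain $X_0\cap X_3\subseteq X_2\cap X_3\subseteq E\subseteq X_2+X_3\subseteq X_0+X_3$ shows that $E$ is intermediate for $(X_0,X_3)$, and since $\mathfrak{L}(X_0,X_3)\subseteq\mathfrak{L}(X_2,X_3)$ while $E$ is an interpolation space for $(X_2,X_3)$, every operator in $\mathfrak{L}(X_0,X_3)$ is bounded on $E$; hence $E\in Int(X_0,X_3)$. The same argument with $(X_0,X_3)$ replaced by $(X_1,X_2)$ throughout (using $X_1\cap X_2\subseteq X_2\cap X_3$, $X_2+X_3\subseteq X_1+X_2$ and $\mathfrak{L}(X_1,X_2)\subseteq\mathfrak{L}(X_2,X_3)$) gives $E\in Int(X_1,X_2)$.

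For ``$\supseteq$'', let $E\in Int(X_0,X_3)\cap Int(X_1,X_2)$. First, $E$ is intermediate for $(X_2,X_3)$: by the first part of \eqref{equa1}, $X_2\cap X_3\subseteq(X_0\cap X_3)+(X_1\cap X_2)\subseteq E$ (the last inclusion because $E$ is an Abelian group containing both $X_0\cap X_3$ and $X_1\cap X_2$), while $E\subseteq(X_0+X_3)\cap(X_1+X_2)\subseteq X_2+X_3$ by the second part of \eqref{equa1}. Next, fix a nonzero $x\in E$; then $x\in(X_0+X_3)\cap(X_1+X_2)\subseteq X_2+X_3$, so \eqref{equa2} applies to this $x$. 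Since $E\in Int(X_0,X_3)$, every $T\in\mathfrak{L}(X_0,X_3)$ maps $x$ into $E$, i.e. ${\mathrm{Orb}}(x;X_0,X_3)\subseteq E$, and likewise ${\mathrm{Orb}}(x;X_1,X_2)\subseteq E$; adding these and invoking \eqref{equa2} yields ${\mathrm{Orb}}(x;X_2,X_3)\subseteq E$. Consequently $S(E)\subseteq E$ for every $S\in\mathfrak{L}(X_2,X_3)$, and each such $S$ is then bounded on $E$ by the closed graph theorem (convergence in $E$ forces convergence in $X_2+X_3$, on which $S$ acts continuously). Hence $E\in Int(X_2,X_3)$, which completes the proof.

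The argument is essentially formal once \eqref{equa2} is granted, so I do not anticipate a real obstacle here; the one point deserving a word of care is the repeated claim that the set-theoretic inclusions among the various spaces are actually continuous embeddings, which is precisely the content of the closed graph / bounded inverse theorems for quasi-Banach spaces and groups (Aoki--Rolewicz), used exactly as in the classical Banach case and already invoked in Section~\ref{Prel-Interpolation}. The genuinely substantive input, namely the orbit identity \eqref{equa2} and the sandwich conditions \eqref{equa1}, is taken as a hypothesis; for the couples of $\ell^{p}$-spaces these are furnished respectively by Proposition~\ref{orbits} and by direct computation with the spaces $\ell^{p}$.
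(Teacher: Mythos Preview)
Your proof is correct and follows essentially the same route as the paper's: both directions use the inclusions $\mathfrak{L}(X_0,X_3),\mathfrak{L}(X_1,X_2)\subseteq\mathfrak{L}(X_2,X_3)$ for ``$\subseteq$'', and the sandwich \eqref{equa1} together with the orbit identity \eqref{equa2} for ``$\supseteq$''. Your version is simply more explicit about the closed-graph steps (continuity of the inclusions and boundedness of $S$ on $E$), which the paper leaves implicit.
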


\begin{proof}
Suppose first $X\in Int\left(X_{2},X_{3}\right)$. If $T\in\mathfrak{L}
(X_{0},X_{3})$ it follows by interpolation that $T:\,X_{2}\rightarrow X_{2}$
and hence $T\in\mathfrak{L}(X_{2},X_{3})$. This implies that $
T:\,X\rightarrow X$, i.e., $X\in Int\left(X_{0},X_{3}\right).$ In the same
manner one can check that $X\in Int\left(X_{1},X_{2}\right)$.

Conversely, let $X\in Int\left(X_{0},X_{3}\right)\cap
Int\left(X_{1},X_{2}\right)$. Then, by \eqref{equa1}, 
\begin{equation*}
X_{2}\cap X_{3}\subseteq(X_{0}\cap X_{3})+(X_{1}\cap X_{2})\subseteq
X\subseteq(X_{0}+X_{3})\cap(X_{1}+X_{2})\subseteq X_{2}+X_{3}, 
\end{equation*}
that is, $X$ is an intermediate space between $X_{2}$ and $X_{3}$.

Moreover, for each $x\in X$ we have ${\mathrm{Orb}}\left(x;X_{0},X_{3}
\right)\subset X$ and ${\mathrm{Orb}}\left(x;X_{1},X_{2}\right)\subset X$.
Hence, applying assumption \eqref{equa2}, we conclude that ${\mathrm{Orb}}\left(x;X_{2},X_{3}\right)\subset X$. Thus, $X\in Int\left(X_{2},X_{3}\right)$, and the desired result
follows.
\end{proof}

Now, we are ready to prove that the full scale of $\ell^{p}$-spaces, $0\leq
p\leq\infty$, possesses the Arazy-Cwikel property.

\begin{proof}[Proof of Theorem \protect\ref{int spaces}]
It suffices to apply Propositions \ref{orbits} and \ref{B-O}.
\end{proof}

\vskip0.5cm

In conclusion of this section we deduce the following result, which
contains, in particular, a solution of the conjecture stated by Levitina,
Sukochev and Zanin in the paper \cite{LSZ17a} (its earlier version may be
found in the preprint \cite{LSZ17}).

As above, let $\sigma_{E}$ denote the Aoki-Rolewicz index of the
quasi-Banach sequence space $E$ (see Section \ref{Prel-Interpolation}).

\begin{theorem}
\label{int properties} Let $q>0$ and let $E$ be a quasi-Banach sequence
space. The following assertions are equivalent:

(i) $E\in Int(\ell^{0},\ell^{q})$;

(ii) $E\in Int\left(\ell^{p},\ell^{q}\right)$ for each $p\in(0,\sigma_{E})$;

(iii) $E\in Int\left(\ell^{0},\ell^{q}\right)$ and $E\in
Int\left(\ell^{p},\ell^{\infty}\right)$ for each $p\in(0,\sigma_{E})$.
\end{theorem}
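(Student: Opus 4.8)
The plan is to prove the cycle of implications $(i)\Rightarrow(iii)\Rightarrow(ii)\Rightarrow(i)$, using the Arazy--Cwikel type result of Theorem \ref{int spaces} together with the interpolation property of symmetric spaces established in Proposition \ref{L3}. Throughout, fix $q>0$ and let $\sigma_E$ be the Aoki--Rolewicz index of $E$; recall that $E$, being an interpolation space with respect to a couple whose smaller endpoint sits inside $\ell^q$, satisfies $\ell^0\subset E\subset\ell^q$ (or $\ell^p\subset E\subset\ell^q$) in each case.

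First I would prove $(i)\Rightarrow(iii)$. Assume $E\in Int(\ell^0,\ell^q)$. Half of $(iii)$ is then immediate. For the other half, fix $p\in(0,\sigma_E)$ and apply Proposition \ref{L3} — or rather the reasoning behind it: since $p<\sigma_E$, Lemma \ref{L2} applies, and by the same Cwikel criterion \cite[Theorem~3]{Cwikel1} used there, $E\in Int(\ell^p,\ell^\infty)$. (Strictly, Proposition \ref{L3} only asserts existence of \emph{some} $p>0$; but its proof actually works for every $p\in(0,\sigma_E)$, so I would either restate it in that sharper form or simply reproduce the one-line argument.) This gives $E\in Int(\ell^p,\ell^\infty)$ for each such $p$, which is $(iii)$.

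Next, $(iii)\Rightarrow(ii)$ is where Theorem \ref{int spaces} does the work. Fix $p\in(0,\sigma_E)$. We are given $E\in Int(\ell^0,\ell^q)$ and $E\in Int(\ell^p,\ell^\infty)$. I want to feed these into \eqref{AC-statement-2} with the role of the four exponents $0\le s<p'<q'<r\le\infty$ played by $s=0$, $p'=p$, $q'=q$, $r=\infty$: that is, choose $s=0<p<q<\infty=r$, so that Theorem \ref{int spaces} yields
\begin{equation*}
Int(\ell^p,\ell^q)=Int(\ell^0,\ell^q)\cap Int(\ell^p,\ell^\infty).
\end{equation*}
Since $E$ lies in both factors on the right, $E\in Int(\ell^p,\ell^q)$. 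As $p\in(0,\sigma_E)$ was arbitrary, $(ii)$ follows. The only point needing care here is the edge case $q<\infty$ versus $q=\infty$: if $q=\infty$ the intersection formula degenerates, but then $(ii)$ and $(iii)$ both reduce to $E\in Int(\ell^p,\ell^\infty)$ for each $p\in(0,\sigma_E)$, so the implication is trivial; I would dispose of this in a sentence.

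Finally, $(ii)\Rightarrow(i)$. Pick any single $p\in(0,\sigma_E)$ (such $p$ exists since $\sigma_E>0$), so that $E\in Int(\ell^p,\ell^q)$. Now take an arbitrary $T\in\mathfrak L(\ell^0,\ell^q)$: $T$ is bounded on $\ell^q$ and a bounded homomorphism on $\ell^0$. The issue is to promote the $\ell^0$-boundedness to $\ell^p$-boundedness so as to conclude $T\in\mathfrak L(\ell^p,\ell^q)$ and hence $T:E\to E$. This is exactly the content of the extension machinery of Subsection \ref{An interpolation property}: a homomorphism of $\ell^0$ bounded by $1$, viewed via its matrix, has columns $t_k=T(e_k)$ with $\|t_k\|_{\ell^0}=\mathrm{card}(\mathrm{supp}\,t_k)$ uniformly bounded, and then Lemma \ref{Lemma_bounded_lp} (with the obvious bookkeeping of the uniform bound) extends it to a bounded operator on $\ell^p$ for $0<p\le 1$, hence on $\ell^p$ for the relevant small $p$; combined with the given $\ell^q$-boundedness this gives $T\in\mathfrak L(\ell^p,\ell^q)$ with controlled norm. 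Since $E\in Int(\ell^p,\ell^q)$, $T$ is bounded on $E$, and therefore $E\in Int(\ell^0,\ell^q)$. I expect this last implication — specifically, the passage from a bounded homomorphism of $\ell^0$ to a bounded linear operator on $\ell^p$ with quantitative norm control — to be the main technical obstacle, though the tools are already assembled in Lemma \ref{Lemma_bounded_lp} and Theorem \ref{Th-bounded-lp}; the remaining implications are essentially bookkeeping on top of Theorem \ref{int spaces} and Proposition \ref{L3}.
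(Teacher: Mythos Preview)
Your cycle $(i)\Rightarrow(iii)\Rightarrow(ii)\Rightarrow(i)$ works in outline, but the last implication is both harder than necessary and has a gap as written.

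In your argument for $(ii)\Rightarrow(i)$ you claim that since $T$ is a bounded homomorphism on $\ell^0$, the columns $t_k=Te_k$ have uniformly bounded $\ell^0$-norm, and that Lemma~\ref{Lemma_bounded_lp} then yields $\ell^p$-boundedness. But a bound on $\|t_k\|_{\ell^0}=\mathrm{card}(\mathrm{supp}\,t_k)$ says nothing about the magnitude of the entries, so by itself it does not control $\sup_k\|t_k\|_{\ell^p}$, which is what Lemma~\ref{Lemma_bounded_lp} actually requires. The fix is to combine the two pieces of information you have: $\|t_k\|_{\ell^0}\le\|T\|_{\ell^0\to\ell^0}=:M_0$ and $\|t_k\|_{\ell^q}\le\|T\|_{\ell^q\to\ell^q}=:M_q$; a sequence with at most $M_0$ nonzero entries and $\ell^q$-norm at most $M_q$ has $\ell^p$-norm at most $M_0^{1/p-1/q}M_q$. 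With this correction (and choosing $p<\min(\sigma_E,1,q)$ so that Lemma~\ref{Lemma_bounded_lp} applies), the argument does go through.

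The paper, however, avoids this step entirely. Observe that $(iii)$ \emph{contains} the assertion $E\in Int(\ell^0,\ell^q)$ as its first clause, so $(iii)\Rightarrow(i)$ is a tautology. The paper therefore proves $(i)\Rightarrow(iii)$ via Proposition~\ref{L3} (after first noting that $(i)$ forces $E$ to be symmetric, which you should also state since Lemma~\ref{L2} needs it), takes $(iii)\Rightarrow(i)$ as obvious, and gets the full equivalence $(ii)\Leftrightarrow(iii)$ directly from Theorem~\ref{int spaces} with $s=0$, $r=\infty$, exactly as you do for one direction. Your technical detour through column estimates is thus unnecessary once you notice the trivial $(iii)\Rightarrow(i)$ and use both directions of the Arazy--Cwikel identity.
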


\begin{proof}
Observe that from (i) it follows that $E$ is symmetric (see e.g. \cite[
Lemma~1.11]{CSZ}). Therefore, the implication $(i)\Longrightarrow(iii)$ is
an immediate consequence of Proposition \ref{L3} (see also its proof). In
turn, the equivalence $(ii)\Longleftrightarrow(iii)$ follows from Theorem 
\ref{int spaces}. Since the implication $(iii)\Longrightarrow(i)$ is
obvious, the proof is completed.
\end{proof}

By the latter result, we are able to determine the exact assumptions under
which the above-mentioned Levitina-Sukochev-Zanin conjecture is resolved in
affirmative. To justify this claim, we consider the following conditions,
assuming that $q>0$ and $E$ is a quasi-Banach sequence space such that $
E\subset\ell^{\infty}$:

(a) for any $x=(x_{n})_{n=1}^{\infty}\in E$ and $y=(y_{n})_{n=1}^{\infty}\in
\ell^{\infty}$ such that 
\begin{equation}
\sum_{n=m}^{\infty}(y_{n}^{*})^{q}\le\sum_{n=m}^{\infty}(x_{n}^{*})^{q},\;
\;m\in\mathbb{N},  \label{LSZ}
\end{equation}
we have $y\in E$ and $\|y\|_{E}\le C\|x\|_{E}$, where $C$ depends only on $E$
and $q$;

(b) there exists $p\in(0,q)$ such that $E\in Int(\ell^{p},\ell^{q})$.

In \cite{LSZ17a}, the authors were asking whether the conditions (a) and (b) are equivalent, and then, in \cite{CSZ}, the affirmative answer to this question has been given in the case when $q\ge1$ (see also \cite{CN17}).

To show a connection of the above question with Theorem \ref{int properties}, suppose that elements $x\in E$ and $y\in\ell^{q}$ satisfy the condition 
\begin{equation*}
{\mathcal{K}}(t,y;\ell^{0},\ell^{q})\leq{\mathcal{K}}(t,x;\ell^{0},
\ell^{q}),\;t>0. 
\end{equation*}
Then, by \eqref{impl2} and  \eqref{EQ13ddd}, we have 
\begin{equation*}
\sum_{n=m}^{\infty}(y_{n}^{\ast})^{q}\leq 2^q\sum_{n=[(m-1)/2]+1}^{\infty}(x_{n}^{\ast})^{q},\;\;m\in\mathbb{N}. 
\end{equation*}
One can easily check that
\begin{equation*}
\sum_{n=[(m-1)/2]+1}^{\infty}(x_{n}^{\ast})^{q}\le \sum_{n=m}^{\infty}(D_{2}x^{\ast})_{n}^{q},\;\;m\in\mathbb{N}. 
\end{equation*}
In consequence, 
\begin{equation*}
\sum_{n=m}^{\infty}(y_{n}^{\ast})^{q}\leq 2^q\sum_{n=m}^{\infty}(D_{2}x^{\ast})_{n}^{q},\;\;m\in\mathbb{N}. 
\end{equation*}
If the condition (a) holds, then clearly the space $E$ is symmetric and hence from the latter inequality it follows easily that 
\begin{equation*}
\Vert y\Vert_{E}\leq2C\Vert D_{2}x\Vert_{E}\leq4C\Vert x\Vert_{E}, 
\end{equation*}
where $C$ is the constant from (a). 
Therefore, $E$ is a uniform ${\mathcal{K}}$-monotone space with respect to
the couple $(\ell^{0},\ell^{q})$ and hence $E\in
Int\left(\ell^{0},\ell^{q}\right)$. Thus, (a) implies condition (i) from
Theorem \ref{int properties}, and so equivalence $(i)\Longleftrightarrow(ii)$
of this theorem shows that the implication $(a)\Longrightarrow(b)$ holds, in fact, for each $q>0$ (including also the non-Calder\'{o}n-Mityagin range $0<q<1$).

Regarding the converse direction, the situation in the cases $q\ge1$ and $0<q<1$ is different. If $q\ge1$, then $(\ell^{0},\ell^{q})$ is a uniform Calder\'{o}n-Mityagin couple (see Corollary \ref{Ar-Cw}). Hence,
from (b) it follows that $E$ is a uniformly $K$-monotone space with respect to $(\ell^{0},\ell^{q})$, and so we have (a). Thus, in this case all conditions (a), (b) and (i) are equivalent.

If $0<q<1$, in general, the implication $(b)\Longrightarrow(a)$ is not longer true. Indeed, since the couple $(\ell^{0},\ell^{q})$ has not the Calder\'{o}n-Mityagin property (see Theorem \ref{Th:No-CM-Property} in the next section), there is a non-${\mathcal{K
}}$-monotone interpolation space $E_{0}$ between $\ell^{0}$ and $\ell^{q}$.
The above discussion shows then that $E_{0}$ does not satisfy the  condition
(a). At the same time, since from Theorem \ref{int properties} it follows
that $E_{0}\in Int\left(\ell^{p},\ell^{q}\right)$ for each $p\in(0,r_{E_{0}})
$, the condition (b) for $E_{0}$ is fulfilled. This proves the last
claim. 

\vskip 1cm

\section{\label{Section-non-CM}A characterization of couples $
\left(\ell^{p},\ell^{q}\right)$ with the Calder\'{o}n-Mityagin property.}

\label{fails CM}

Here, we prove one of the main results of this paper, showing that $
\left(\ell^{p},\ell^{q}\right)$ fails to be a Calder\'{o}n-Mityagin couple
whenever $0\leq p<q<1$. In fact, we establish even a stronger result, which
implies, in particular, that for every sequence $g\in\ell^{q}\setminus
\ell^{p}$ there exists a sequence $f\in\ell^{q}$ such that $g\in
K-Orb\left(f,\ell^{p},\ell^{q}\right)\setminus
Orb\left(f,\ell^{p},\ell^{q}\right)$. First, we introduce the following
useful notion.

\begin{definition}
Let $\vec{X}=(X_{0},X_{1})$ be a quasi-Banach couple (or more generally, a
couple of quasi-Banach Abelian groups) and let $y\in X_{0}+X_{1}$. Then, we say that $y$ is a \textit{Calder\'{o}n-Mityagin element} (\textit{CM-element}, in brief) with respect to $\vec{X}$ provided if the inequality
\begin{equation}
{\mathcal{K}}(t,y;X_{0},X_{1})\leq{\mathcal{K}}(t,x;X_{0},X_{1}),\;\;t>0,
\label{K-func ineq}
\end{equation}
which holds for an element $x\in X_{0}+X_{1}$, implies the existence of  an operator $T\in\mathfrak{L} (X_{0},X_{1})$ with $Tx=y.$ The set of all CM-elements with respect to the
couple $\vec{X}$ we will denote by $CM(\vec{X})=CM(X_{0},X_{1})$.
\end{definition}

Clearly, $CM(\vec{X})=X_{0}+X_{1}$ if and only if the couple $\vec{X}$ has
the Calder\'{o}n-Mityagin property.

\vskip0.5cm



\vskip0.5cm

Let a quasi-Banach couple $\vec{X}$ be such that the sum $X_{0}+X_{1}$ is
continuously embedded into a Banach space $Z$. Show that then we have 
\begin{equation}
\label{CM-emb}
CM(\vec{X})\supset X_{0}\cap X_{1}.
\end{equation}
Indeed, let inequality \eqref{K-func ineq}
to be hold for some $y\in X_{0}\cap X_{1}$ and $x\in X_{0}+X_{1}$. Clearly,
we may assume that $x\neq0$. Then, according to Hahn-Banach Theorem, there
is a linear functional $x^{\ast}\in Z^{\ast}$, $\Vert
x^{\ast}\Vert_{Z^{\ast}}=1/\Vert x\Vert_{Z}$, with $x^{\ast}(x)=1$. Now, if
the operator $T$ is defined by $Tu:=x^{\ast}(u)y$, then $T$ is bounded from $
X_{0}+X_{1}$ into $X_{0}\cap X_{1}$ and $Tx=y.$ Moreover, for $i=0,1$ we have
\begin{equation*}
\Vert Tu\Vert_{X_{i}}\leq|x^{\ast}(u)|\Vert y\Vert_{X_{i}}\leq\Vert
x^{\ast}\Vert_{Z^{*}}\Vert u\Vert_{Z}\Vert y\Vert_{X_{i}}\leq \frac{C\Vert y\Vert_{X_{i}}}{\Vert
x\Vert_{Z}}\Vert u\Vert_{X_{i}}, 
\end{equation*}
where $C$ is the embedding constant of $X_{0}+X_{1}$ into $Z$, and \eqref{CM-emb} is proved.

In particular, since $\ell^{r}\subset\ell^{1}$ continuously if $r\in(0,1)$,
we have that $\ell^{p}\subseteq CM(\ell^{p},\ell^{q})$ for all $
0<p<q\leq\infty$. Moreover, this result can be extended also to the extreme case $p=0$. Indeed, assuming that elements $x\in\ell^{q}$ and $y\in\ell^{0}$, $y\neq0$, satisfy \eqref{K-func ineq} in the case $\vec{X}=(\ell^{0},\ell^{q})$, one
can readily check that the operator $Tu:=x^{\ast}(u)y$, where $
x^{\ast}\in(\ell^{1})^{\ast}=\ell^{\infty}$ such that $\Vert
x^{\ast}\Vert_{\ell^{\infty}}=1/\Vert x\Vert_{\ell^{1}}$ and $x^{\ast}(x)=1$,
is a bounded homomorphism on $\ell^{0}$ and $\Vert
T\Vert_{\ell^{0}\rightarrow\ell^{0}}\leq\Vert y\Vert_{\ell^{0}}$.

\vskip0.5cm

The following result shows that for the couple $\left(\ell^{p},
\ell^{q}\right)$, $0\leq p<q<1$, embedding \eqref{CM-emb} turns into an equality.

\begin{theorem}
\label{Th:No-CM-Property} Let $0\leq p<q<1.$ Then, $CM\left(\ell^{p},
\ell^{q}\right)=\ell^{p}.$ In particular, $\left(\ell^{p},\ell^{q}\right)$
is not a Calder\'{o}n-Mityagin couple.
\end{theorem}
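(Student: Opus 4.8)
As observed just before the statement, $\ell^{p}\subseteq CM(\ell^{p},\ell^{q})$, so everything rests on the reverse inclusion: given $g\in\ell^{q}\setminus\ell^{p}$, one must produce $f\in\ell^{q}$ with ${\mathcal{K}}(t,g;\ell^{p},\ell^{q})\le{\mathcal{K}}(t,f;\ell^{p},\ell^{q})$ for all $t>0$ but $g\notin{\mathrm{Orb}}(f;\ell^{p},\ell^{q})$. Since every permutation of the coordinates is an isometry of each $\ell^{r}$ (and of $\ell^{0}$), both ${\mathcal{K}}$-functionals and the orbit are rearrangement invariant, so we may take $g=g^{\ast}$. Because $q<1$, Corollary~\ref{Corr-1} identifies ${\mathrm{Orb}}(f;\ell^{p},\ell^{q})$ with ${{\mathcal{K}}-{\mathrm{Orb}}}(f;\ell^{p},\ell^{1})$ (the same covers $p=0$, with $\ell^{0}$ in the role of $\ell^{p}$). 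Hence the task reduces to finding a nonincreasing $f\in\ell^{q}$ with
\begin{equation*}
{\mathcal{K}}(t,g;\ell^{p},\ell^{q})\le{\mathcal{K}}(t,f;\ell^{p},\ell^{q})\quad(t>0),\qquad\sup_{t>0}\frac{{\mathcal{K}}(t,g;\ell^{p},\ell^{1})}{{\mathcal{K}}(t,f;\ell^{p},\ell^{1})}=\infty .
\end{equation*}

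The next step is to recast the two ${\mathcal{K}}$-conditions via Holmstedt. By \eqref{Holmsteds_formula4} (and \eqref{EQ13d}, \eqref{EQ13ddd} if $p=0$), for a nonincreasing $x\ge0$ and $n\in\mathbb{N}$,
\begin{equation*}
{\mathcal{K}}\bigl(n^{1/\alpha},x;\ell^{p},\ell^{q}\bigr)\asymp\Bigl(\sum_{k\le n}x_{k}^{p}\Bigr)^{1/p}+n^{1/\alpha}\Bigl(\sum_{k\ge n}x_{k}^{q}\Bigr)^{1/q},\qquad\tfrac{1}{\alpha}=\tfrac{1}{p}-\tfrac{1}{q},
\end{equation*}
and the same formula holds with $q$ replaced by $1$ (the scale becoming $n^{1/\alpha'}$, $1/\alpha'=1/p-1$, and the $\ell^{q}$-tail becoming an $\ell^{1}$-tail); when $p=0$ the first summand is simply replaced by $n$. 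The mechanism driving the construction is that for a flat block of length $N$ one has $\|\cdot\|_{\ell^{q}}/\|\cdot\|_{\ell^{1}}=N^{1/q-1}$, which tends to $\infty$ precisely because $q<1$: a long plateau in $f$ injects a lot of mass into the $\ell^{q}$-tail — hence into ${\mathcal{K}}(\cdot,f;\ell^{p},\ell^{q})$ — while contributing almost nothing to the $\ell^{1}$-tail.

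Accordingly I would take $f=\sum_{j}v_{j}\chi_{I_{j}}$, a sum of consecutive disjoint flat blocks with heights $v_{1}>v_{2}>\cdots$ and lengths $N_{j}=|I_{j}|$, and calibrate the $v_{j}$'s to $g$: fix a rapidly increasing sequence of scales tied to the partial sums $\sum_{k\le n}g_{k}^{p}$ (which tend to $\infty$ since $g\notin\ell^{p}$), and let block $j$ carry just enough $\ell^{q}$-mass that, multiplied by its Holmstedt scale, it exceeds a fixed multiple of ${\mathcal{K}}(\cdot,g;\ell^{p},\ell^{q})$ there, while still arranging $\sum_{j}N_{j}v_{j}^{q}<\infty$ so that $f\in\ell^{q}$; after multiplying $f$ by a constant depending only on $p,q$ this yields the first displayed requirement for \emph{all} $t$. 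One then lets $N_{j}$ grow much faster than the starting position $M_{j-1}=N_{1}+\cdots+N_{j-1}$ of block $j$; at the scale $t_{j}$ at which the optimal Holmstedt split of $f$ falls at the left end of $I_{j}$, the factor $N_{j}^{1/q-1}$ forces ${\mathcal{K}}(t_{j},f;\ell^{p},\ell^{1})\ll{\mathcal{K}}(t_{j},g;\ell^{p},\ell^{1})$, so the supremum in the second requirement is $+\infty$. Thus $g\in{{\mathcal{K}}-{\mathrm{Orb}}}(f;\ell^{p},\ell^{q})$ while $g\notin{{\mathcal{K}}-{\mathrm{Orb}}}(f;\ell^{p},\ell^{1})={\mathrm{Orb}}(f;\ell^{p},\ell^{q})$, i.e.\ $g\notin CM(\ell^{p},\ell^{q})$. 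The case $p=0$ is handled identically, reading $n$ for $(\sum_{k\le n}x_{k}^{p})^{1/p}$ and using Corollary~\ref{Corr-1} for the couple $(\ell^{0},\ell^{q})$.

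I expect the real difficulty to be this simultaneous calibration against an \emph{arbitrary} $g$: the two requirements pull in opposite directions, since any increase of the blocks' $\ell^{q}$-content needed to keep ${\mathcal{K}}(\cdot,f;\ell^{p},\ell^{q})$ above ${\mathcal{K}}(\cdot,g;\ell^{p},\ell^{q})$ also increases their $\ell^{1}$-content, and only the super-linear growth of the $N_{j}$ keeps the latter negligible at the scales $t_{j}$. Making this precise — choosing $v_{j}$ and $N_{j}$ together from the decreasing rearrangement of $g$, tracking the Holmstedt constants, and verifying that the tail term really controls ${\mathcal{K}}(\cdot,f;\ell^{p},\ell^{q})$ at \emph{every} scale (not only at the special $t_{j}$), so that its domination of ${\mathcal{K}}(\cdot,g;\ell^{p},\ell^{q})$ persists between corners — is where the weight of the proof will lie.
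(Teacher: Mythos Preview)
Your reduction is exactly the paper's: invoke Corollary~\ref{Corr-1} to identify $\mathrm{Orb}(f;\ell^{p},\ell^{q})$ with $\mathcal{K}\text{-}\mathrm{Orb}(f;\ell^{p},\ell^{1})$, so that the task becomes building $f\in\ell^{q}$ with $\mathcal{K}(\cdot,g;\ell^{p},\ell^{q})\lesssim\mathcal{K}(\cdot,f;\ell^{p},\ell^{q})$ but $\mathcal{K}(\cdot,g;\ell^{p},\ell^{1})\not\lesssim\mathcal{K}(\cdot,f;\ell^{p},\ell^{1})$. Your intuition that the mechanism is the factor $N^{1/q-1}$ for flat blocks is also the right one.

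Where the paper's construction differs from yours---and why it sidesteps the ``simultaneous calibration'' you correctly flag as the hard part---is that it never tries to verify the $(\ell^{p},\ell^{q})$ $\mathcal{K}$-inequality directly. Instead it builds $f$ by applying to $\widetilde{g}$ an infinite composition of ``stretching'' maps $T_{a,b}$ (keep the function on $[0,a)$, then dilate the tail by factor $b$ and divide by $b^{1/q}$). These maps preserve the $\ell^{q}$-tails exactly (Lemma~\ref{Lemma_tab}\eqref{lemma_tab_2}--\eqref{lemma_tab_3}), so $\mathcal{E}(t,g;\ell^{0},\ell^{q})\le\mathcal{E}(t,f;\ell^{0},\ell^{q})$ and hence $\mathcal{K}(t,g;\ell^{0},\ell^{q})\le\mathcal{K}(t,f;\ell^{0},\ell^{q})$ comes \emph{for free}; the $(\ell^{p},\ell^{q})$-inequality (with a constant) then follows by reiteration, since $\ell^{p}$ is a real-method space for $(\ell^{0},\ell^{q})$. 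The parameters $a_{n},b_{n}$ are then chosen freely to drive the $(\ell^{p},\ell^{1})$ $\mathcal{K}$-functional of $f$ far below that of $g$ along a subsequence---this is where $g\notin\ell^{p}$ is used, to make the $\ell^{p}$-partial sums of $g$ blow up relative to those of $f$. In short, working at the $(\ell^{0},\ell^{q})$ level decouples the two requirements: the domination is automatic from the construction, and only the failure needs calibration.

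Your flat-block ansatz is morally the same object (a $T_{a,b}$-stretched $g$ looks locally like a flat block), but by building $f$ from scratch rather than as a transform of $g$ you lose the automatic tail-$\ell^{q}$ domination and must instead track both Holmstedt expressions for all $n$ at once. This is likely doable but genuinely more painful; the paper's route via $(\ell^{0},\ell^{q})$ and reiteration is the cleaner organizing idea you are missing. One small caveat: your remark ``after multiplying $f$ by a constant'' does not help when $p=0$, since $\|\cdot\|_{\ell^{0}}$ is scale-invariant; the paper avoids this by getting the $(\ell^{0},\ell^{q})$ inequality with constant $1$ from the outset.
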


Applying Theorems \ref{Ar-Cw} and \ref{Th:No-CM-Property}, we get the
following characterization of couples $\left(\ell^{p},\ell^{q}\right)$ with
the Calder\'{o}n-Mityagin property.

\begin{corollary}
\label{th2} Let $0\le p<q\le\infty$. Then, the following conditions are
equivalent:

(i) $\left(\ell^{p},\ell^{q}\right)$ is a Calder\'{o}n-Mityagin couple;

(ii) $\left(\ell^{p},\ell^{q}\right)$ is a uniform Calder\'{o}n-Mityagin
couple;

(iii) $q\ge1$.
\end{corollary}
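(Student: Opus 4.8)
The plan is to establish the cycle of implications $(i)\Rightarrow(iii)\Rightarrow(ii)\Rightarrow(i)$, which makes all three conditions equivalent. Two of the three links come for free. The implication $(ii)\Rightarrow(i)$ is immediate from Definition \ref{def:CalderonMityaginCouple}, since a uniform Calder\'{o}n-Mityagin couple is in particular a Calder\'{o}n-Mityagin couple. The implication $(iii)\Rightarrow(ii)$ is precisely the content of Corollary \ref{Ar-Cw}, which asserts that $\left(\ell^{p},\ell^{q}\right)$ is a uniform Calder\'{o}n-Mityagin couple whenever $0\le p<q\le\infty$ and $q\ge1$.

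The only substantive point is $(i)\Rightarrow(iii)$, which I would prove by contraposition. Suppose $q<1$; then automatically $0\le p<q<1$, so Theorem \ref{Th:No-CM-Property} applies and gives $CM\left(\ell^{p},\ell^{q}\right)=\ell^{p}$. Since $0\le p<q$, we have $\ell^{p}+\ell^{q}=\ell^{q}$ (for $p>0$ because $\ell^{p}\subset\ell^{q}$, and for $p=0$ because $\ell^{0}+\ell^{q}=\ell^{q}$), while $\ell^{p}$ is a proper subspace of $\ell^{q}$ (it omits, for instance, sequences that decay like $k^{-1/q}$ when $p>0$, and any element of $\ell^{q}$ that is not eventually zero when $p=0$). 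Hence $CM\left(\ell^{p},\ell^{q}\right)=\ell^{p}\subsetneq\ell^{p}+\ell^{q}$, and by the observation recorded immediately after the definition of $CM(\vec{X})$ --- namely that $CM(\vec{X})=X_{0}+X_{1}$ if and only if $\vec{X}$ has the Calder\'{o}n-Mityagin property --- the couple $\left(\ell^{p},\ell^{q}\right)$ is not a Calder\'{o}n-Mityagin couple. This contradicts $(i)$, so $(i)$ forces $q\ge1$, completing the cycle.

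Since every step here is a direct appeal to results already available --- Corollary \ref{Ar-Cw} and Theorem \ref{Th:No-CM-Property} --- there is no genuine obstacle at this stage: the real work lies in those two statements, the first deduced in Section \ref{Section-AC} from Theorem \ref{p3} and the Holmstedt formula, the second being the main result of the present section. The only minor thing to check in the deduction itself is that $CM\left(\ell^{p},\ell^{q}\right)$ genuinely fails to exhaust the sum $\ell^{p}+\ell^{q}=\ell^{q}$, i.e., that $\ell^{p}\neq\ell^{q}$ for $p<q$, which is clear.
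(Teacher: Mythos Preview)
Your proof is correct and follows exactly the same route as the paper, which simply invokes Corollary \ref{Ar-Cw} for $(iii)\Rightarrow(ii)$ and Theorem \ref{Th:No-CM-Property} for $(i)\Rightarrow(iii)$ (with $(ii)\Rightarrow(i)$ trivial). One small slip: the illustrative sequence $(k^{-1/q})$ is not in $\ell^{q}$ either; the correct example, also used in the paper, is $(k^{-\sigma})$ with $\sigma\in(1/q,1/p)$.
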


As we will see a little bit later, Theorem $\ref{Th:No-CM-Property}$ is a
straightforward consequence of the following result.

\begin{theorem}
\label{Th-main-estimate} Assume $0\le p<q<r\leq\infty.$ Let $
g=\left(g_{n}\right)_{n=1}^{\infty}\in\ell^{q}\setminus\ell^{p}$ be a
nonnegative and nonincreasing sequence. Then, there exists a nonnegative,
nonincreasing sequence $f=\left(f_{n}\right)_{n=1}^{\infty}$ such that 
\begin{equation}
f\in\ell^{q}\text{ and }\left\Vert f\right\Vert _{\ell^{q}}=\left\Vert
g\right\Vert _{\ell^{q}},  \label{th_main_estimate_lp}
\end{equation}
\begin{equation}
{\mathcal{K}}\left(t,g;\ell^{0},\ell^{q}\right)\leq{\mathcal{K}}
\left(t,f;\ell^{0},\ell^{q}\right),\;\;t>0,  \label{th_main_estimate_mon}
\end{equation}
but 
\begin{equation}
\lim\inf_{t\rightarrow\infty}\frac{{\mathcal{K}}\left(t,f;\ell^{p},\ell^{r}
\right)}{{\mathcal{K}}\left(t,g;\ell^{p},\ell^{r}\right)}=0.
\label{th_main_k_estimate}
\end{equation}
\end{theorem}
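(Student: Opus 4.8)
The plan is to construct the sequence $f$ explicitly from $g$ by concentrating the ``tails'' of $g$ into a few large coordinates, exploiting the fact that for $0\le p<q$ the $\ell^p$-``norm'' of a block is maximized (given a fixed $\ell^q$-norm) by spreading mass over many coordinates, while the $\mathcal{K}(t,\cdot;\ell^0,\ell^q)$-functional only sees the $\ell^q$-tails $\big(\sum_{k\ge m}(x_k^\ast)^q\big)^{1/q}$ via formula \eqref{EQ13ddd}. Since $g\in\ell^q\setminus\ell^p$, we may choose a rapidly increasing sequence of indices $N_0=0<N_1<N_2<\cdots$ such that the blocks $g^{(j)}:=(g_k)_{N_{j-1}<k\le N_j}$ satisfy $\sum_{N_{j-1}<k\le N_j}g_k^p\to\infty$ as $j\to\infty$ (possible because the $p$-sum diverges). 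For each $j$ define $f$ on the block $(N_{j-1},N_j]$ to equal a single spike of height $\big(\sum_{N_{j-1}<k\le N_j}g_k^q\big)^{1/q}$ in the first slot of the block, followed by zeros — so that $f$ is then rearranged to be nonincreasing overall. This makes $\|f\|_{\ell^q}=\|g\|_{\ell^q}$ automatic, giving \eqref{th_main_estimate_lp}.

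Next I would verify \eqref{th_main_estimate_mon}. By \eqref{EQ13ddd} the inequality $\mathcal{K}(t,g;\ell^0,\ell^q)\le\mathcal{K}(t,f;\ell^0,\ell^q)$ for all $t>0$ is equivalent (via \eqref{EQ13d}, since $\mathcal{K}$ is the infimal convolution of $s\mapsto s$ with $t\cdot\mathcal{E}(s,\cdot)$, and $\mathcal{E}$ is decreasing) to the tail comparison $\sum_{k>m}(g_k^\ast)^q\le\sum_{k>m}(f_k^\ast)^q$ for all $m\in\mathbb{N}$ — more precisely one wants this at all integers, which by the infimal-convolution representation suffices. The point is that my construction moves $\ell^q$-mass from later blocks into earlier slots: since each spike of $f$ sits at the \emph{beginning} of its block and carries the \emph{entire} $\ell^q$-mass of that block of $g$, the nonincreasing rearrangement $f^\ast$ dominates $g$ in the sense that partial tail-sums of $f^{\ast q}$ are at least as large as those of $g^{\ast q}$. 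This is the Hardy–Littlewood–Pólya majorization step and needs to be checked carefully with the rearrangement bookkeeping, but it is essentially forced by the block structure (concentration increases tail sums). Actually one must be slightly careful: we need $f^\ast$'s tail sums to dominate $g$'s, so the spikes should if anything be placed and sized so that $f^\ast = $ (sorted spikes) interlaces above $g^\ast$; choosing the block lengths $N_j-N_{j-1}$ large enough that the spike heights decrease and the comparison holds is the technical heart here.

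Finally, for \eqref{th_main_k_estimate} I would use the Holmstedt-type description \eqref{Holmsteds_formula4} (for $r<\infty$) or \eqref{Holmsteds_formula4a} (for $r=\infty$) of $\mathcal{K}(t,\cdot;\ell^p,\ell^r)$ in terms of $\mathcal{P}_p x$ and $\mathcal{Q}_r x$. Evaluate both $\mathcal{K}$-functionals at $t=N_j^{1/\beta}$ (where $1/\beta=1/p-1/r$, or $t=N_j^{1/p}$ if $r=\infty$). For $g$: $\mathcal{K}(N_j^{1/\beta},g;\ell^p,\ell^r)\gtrsim (\mathcal{P}_p g)_{N_j}=\big(\sum_{k\le N_j}g_k^p\big)^{1/p}\to\infty$, and in fact this quantity grows roughly like $\big(\sum_{i\le j}\sum_{N_{i-1}<k\le N_i}g_k^p\big)^{1/p}$. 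For $f$: at the same value of $t$, $(\mathcal{P}_p f)_{N_j}$ only counts the first $N_j$ coordinates of $f^\ast$, i.e.\ at most the $j$ largest spikes, so $(\mathcal{P}_p f)_{N_j}^p\le \sum_{i\le j}\big(\sum_{N_{i-1}<k\le N_i}g_k^q\big)^{p/q}$, while $N_j^{1/\beta}(\mathcal{Q}_r f)_{N_j}$ is controlled by the size of the $j$-th spike times $N_j^{1/\beta}$. By choosing the $N_j$ to grow fast enough — specifically so that $\big(\sum_{i\le j}(\sum_{\text{block }i}g_k^q)^{p/q}\big)^{1/p}$ together with the spike-tail term is $o\big(\big(\sum_{i\le j}\sum_{\text{block }i}g_k^p\big)^{1/p}\big)$, which is achievable because $\sum_k g_k^q<\infty$ forces the block $q$-masses summable (hence their $p/q$-th powers controllable) while the block $p$-masses are chosen to diverge — we obtain $\mathcal{K}(N_j^{1/\beta},f;\ell^p,\ell^r)/\mathcal{K}(N_j^{1/\beta},g;\ell^p,\ell^r)\to 0$ along the subsequence $t=N_j^{1/\beta}$, giving \eqref{th_main_k_estimate}. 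The main obstacle, and where I would spend the most care, is the simultaneous fine-tuning of the cut-points $N_j$: they must be large enough that the tail-majorization \eqref{th_main_estimate_mon} holds \emph{and} that the spike contributions to $\mathcal{K}(\cdot,f;\ell^p,\ell^r)$ stay negligible against the diverging $\ell^p$-partial-sums of $g$; balancing these two requirements (a lower bound needed for one, an upper bound for the other, both on the same parameters) is the delicate part, but it is a standard ``choose $N_j$ inductively, fast enough'' argument once the estimates above are written out.
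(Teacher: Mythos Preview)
There is a genuine gap: your concentration-into-spikes construction produces the \emph{wrong} direction in the tail comparison needed for \eqref{th_main_estimate_mon}. If the spikes $s_j=(\sum_{N_{j-1}<k\le N_j}g_k^q)^{1/q}$ are arranged in nonincreasing order, then $f^\ast=(s_1,s_2,\dots)$ and for every $m\in\mathbb N$
\[
\sum_{k>m}(f_k^\ast)^q=\sum_{j>m}s_j^q=\sum_{k>N_m}g_k^q\le\sum_{k>m}g_k^q,
\]
since $N_m\ge m$. So $\mathcal{E}(m,f;\ell^0,\ell^q)\le\mathcal{E}(m,g;\ell^0,\ell^q)$ and hence $\mathcal{K}(t,f;\ell^0,\ell^q)\le\mathcal{K}(t,g;\ell^0,\ell^q)$, which is the reverse of \eqref{th_main_estimate_mon}. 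Your heuristic ``concentration increases tail sums'' is backwards: packing the $\ell^q$-mass of a long block into a single coordinate pushes that mass \emph{forward} in the rearrangement, so less of it remains in any given tail. (A two-term check: concentrating $(1,1)$ to $(\sqrt2,0)$ with $q=2$ kills the tail after position $1$.)

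The construction has to go the other way: one must \emph{spread} $g$ out, not concentrate it. The paper does exactly this, iteratively applying dilations $T_{a,b}$ that stretch the tail past a cut-point $a$ by a factor $b$ while rescaling values by $b^{-1/q}$; this preserves the $\ell^q$-norm of the tail, strictly increases every $\ell^q$-tail integral $\int_t^\infty(\cdot)^q$ (Lemma~\ref{Lemma_tab}, inequality \eqref{lemma_tab_2}), and drives the $\ell^p$-partial integrals $\int_0^t(\cdot)^p$ down toward $\int_0^a(\cdot)^p$ as $b\to\infty$. Your computation for \eqref{th_main_k_estimate} correctly identifies that one wants $(\mathcal P_pf)_n$ small against $(\mathcal P_pg)_n$, but you cannot achieve this \emph{and} the tail domination \eqref{th_main_estimate_mon} simultaneously by concentrating; the two requirements pull in the same direction (spread), not opposite ones, and the delicate inductive balancing you anticipate is between how far to stretch at each stage versus where to place the next cut-point.
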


Before proceeding with the proof of this theorem, we deduce some its consequences.

\begin{corollary}
\bigskip{}  \label{Th_main_est_corr2} Let $0\le p<q<r\leq\infty$, $q\geq1$.
For any $g\in\ell^{q}\setminus\ell^{p}$ there exists $f\in\ell^{q}$ such
that 
\begin{equation*}
g\in{\mathrm{Orb}}\left(f,\ell^{s},\ell^{q}\right)\;\;\mbox{for each}
\;\;s\in\lbrack0,q) 
\end{equation*}
and 
\begin{equation*}
g\notin{\mathrm{Orb}}\left(f,\ell^{p},\ell^{r}\right). 
\end{equation*}
\end{corollary}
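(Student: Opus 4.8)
The plan is to read this corollary off almost directly from Theorem~\ref{Th-main-estimate}, using in addition the uniform Calder\'{o}n-Mityagin property of the couple $(\ell^{0},\ell^{q})$ when $q\ge 1$ (Corollary~\ref{Ar-Cw}) and the fact that each $\ell^{s}$ with $0<s<q$ is an interpolation space with respect to $(\ell^{0},\ell^{q})$.

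First I would reduce to nonincreasing data. Since $g\in\ell^{q}\setminus\ell^{p}$, the sequence $g$ cannot be finitely supported, so its rearrangement $g^{\ast}$ is strictly positive and $g^{\ast}\in\ell^{q}\setminus\ell^{p}$. Apply Theorem~\ref{Th-main-estimate}, with the given exponents $p<q<r$, to $g^{\ast}$ in place of $g$; this produces a nonnegative nonincreasing $f\in\ell^{q}$ with $\Vert f\Vert_{\ell^{q}}=\Vert g^{\ast}\Vert_{\ell^{q}}=\Vert g\Vert_{\ell^{q}}$, with ${\mathcal{K}}(t,g^{\ast};\ell^{0},\ell^{q})\le{\mathcal{K}}(t,f;\ell^{0},\ell^{q})$ for all $t>0$, and with $\liminf_{t\to\infty}{\mathcal{K}}(t,f;\ell^{p},\ell^{r})/{\mathcal{K}}(t,g^{\ast};\ell^{p},\ell^{r})=0$. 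Because $\ell^{0},\ell^{p},\ell^{q},\ell^{r}$ are symmetric, the ${\mathcal{K}}$-functionals of $g$ and of $g^{\ast}$ coincide for the couple $(\ell^{0},\ell^{q})$ (by \eqref{EQ13ddd} and \eqref{EQ13d}) and are equivalent, with constants depending only on $p$ and $r$, for the couple $(\ell^{p},\ell^{r})$ (by the Holmstedt formula \eqref{Holmsteds_formula4}, by \eqref{Holmsteds_formula4a} when $r=\infty$, or again by \eqref{EQ13ddd} when $p=0$). I claim this same $f$ satisfies the two required conclusions for the original $g$.

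For the first conclusion, ${\mathcal{K}}(t,g;\ell^{0},\ell^{q})={\mathcal{K}}(t,g^{\ast};\ell^{0},\ell^{q})\le{\mathcal{K}}(t,f;\ell^{0},\ell^{q})$ for all $t>0$. Since $q\ge 1$, the couple $(\ell^{0},\ell^{q})$ is a uniform Calder\'{o}n-Mityagin couple (Corollary~\ref{Ar-Cw}), so there is $T\in\mathfrak{L}(\ell^{0},\ell^{q})$ with $Tf=g$; in particular $g\in{\mathrm{Orb}}(f;\ell^{0},\ell^{q})$. For $0<s<q$ the operator $T$, being bounded on $\ell^{0}$ and on $\ell^{q}$, is bounded on $\ell^{s}$ as well (since $\ell^{s}\in Int(\ell^{0},\ell^{q})$, exactly as in the proof of Proposition~\ref{orbits}), hence $T\in\mathfrak{L}(\ell^{s},\ell^{q})$ and $g=Tf\in{\mathrm{Orb}}(f;\ell^{s},\ell^{q})$; this gives the first conclusion for every $s\in[0,q)$. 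For the second conclusion I argue by contradiction: if $g=Sf$ for some $S\in\mathfrak{L}(\ell^{p},\ell^{r})$, then applying $S$ to an arbitrary decomposition of $f$ in $\ell^{p}+\ell^{r}$ gives ${\mathcal{K}}(t,g;\ell^{p},\ell^{r})\le\Vert S\Vert_{\mathfrak{L}(\ell^{p},\ell^{r})}{\mathcal{K}}(t,f;\ell^{p},\ell^{r})$ for all $t>0$ (this is just the inclusion ${\mathrm{Orb}}(f)\subset{\mathcal{K}}-{\mathrm{Orb}}(f)$ with constant $1$). Combining this with ${\mathcal{K}}(t,g^{\ast};\ell^{p},\ell^{r})\le C{\mathcal{K}}(t,g;\ell^{p},\ell^{r})$ and with $g\neq 0$ (so all these ${\mathcal{K}}$-functionals are positive and finite for every $t>0$), we obtain a strictly positive lower bound for ${\mathcal{K}}(t,f;\ell^{p},\ell^{r})/{\mathcal{K}}(t,g^{\ast};\ell^{p},\ell^{r})$ uniform in $t$, contradicting \eqref{th_main_k_estimate}. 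Hence $g\notin{\mathrm{Orb}}(f;\ell^{p},\ell^{r})$, completing the proof.

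As for difficulty, this corollary is short once Theorem~\ref{Th-main-estimate} is available: the only auxiliary facts used are the rearrangement-invariance of the relevant ${\mathcal{K}}$-functionals (from \eqref{EQ13ddd}, \eqref{EQ13d} and Holmstedt), the interpolation step $\mathfrak{L}(\ell^{0},\ell^{q})\subset\mathfrak{L}(\ell^{s},\ell^{q})$ for $0<s<q$ (already invoked for Proposition~\ref{orbits}), and Corollary~\ref{Ar-Cw}, all of which are in hand. So the real content, and hence the main obstacle, lies entirely in the proof of Theorem~\ref{Th-main-estimate}, which is carried out separately.
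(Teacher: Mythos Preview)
Your proof is correct and follows the same overall strategy as the paper: invoke Theorem~\ref{Th-main-estimate} to produce $f$, use Corollary~\ref{Ar-Cw} for the first conclusion, and derive the second by a ${\mathcal{K}}$-functional contradiction. There is one small but pleasant difference in how you obtain $g\in{\mathrm{Orb}}(f;\ell^{s},\ell^{q})$: the paper passes from the ${\mathcal{K}}$-inequality for $(\ell^{0},\ell^{q})$ to one for $(\ell^{s},\ell^{q})$ via the reiteration theorem and then applies the Calder\'{o}n-Mityagin property of $(\ell^{s},\ell^{q})$ separately for each $s$, whereas you apply it once for $(\ell^{0},\ell^{q})$ to get a single $T\in\mathfrak{L}(\ell^{0},\ell^{q})$ with $Tf=g$ and then observe by interpolation that $T\in\mathfrak{L}(\ell^{s},\ell^{q})$ for every $s\in(0,q)$. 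Your route avoids the reiteration step and produces one operator that works for all $s$ simultaneously; the paper's route yields, a priori, a different operator for each $s$. Your explicit treatment of the reduction to $g^{\ast}$ (via \eqref{EQ13ddd}, \eqref{EQ13d}, and Holmstedt) is also somewhat more careful than the paper's one-line reduction.
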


\begin{proof}
Clearly, we can assume that $f$ is nonnegative and nonincreasing. Then, by Theorem \ref{Th-main-estimate}, there exists $f\in\ell^{q}$ satisfying
conditions \eqref{th_main_estimate_mon} and \eqref{th_main_k_estimate}.
Recall that, for every $0<s<q$, $\ell^{s}$ is an interpolation space between $\ell^{0}$ and $\ell^{q}$, which can be obtained by applying the classical real $\mathcal{K}$-method to the couple $(\ell^{0},\ell^{q})$ (see Section 
\ref{Prel-sequence-spaces} and \cite[Theorem~7.1.7]{BL76}). Therefore,
combining inequality \eqref{th_main_estimate_mon} together with the
well-known reiteration theorem (see e.g. \cite[Theorem~3.1.2, p.~332]{BK91}), we can find, for every $s\in\lbrack0,q)$, a constant $C_{q,s}$ such that 
\begin{equation}
{\mathcal{K}}\left(t,g;\ell^{s},\ell^{q}\right)\leq C_{q,s}\cdot{\mathcal{K}}
\left(t,f;\ell^{s},\ell^{q}\right),\;\;t>0.  \label{K-func ineq1}
\end{equation}
Therefore, since $q\geq1$, from Theorem \ref{Ar-Cw} it follows that $g\in{
\mathrm{Orb}}\left(f,\ell^{s},\ell^{q}\right).$

On the other hand, let us assume that $g\in{\mathrm{Orb}}\left(f;\ell^{p},
\ell^{r}\right)$, that is, $g=Wf$ for some operator $W\in\mathfrak{L}
(\ell^{p},\ell^{r})$. Hence, 
\begin{equation*}
{\mathcal{K}}\left(t,g;\ell^{p},\ell^{r}\right)={\mathcal{K}}
\left(t,Wf;\ell^{p},\ell^{r}\right)\leq\left\Vert W\right\Vert _{\mathfrak{L}
(\ell^{p},\ell^{r})}{\mathcal{K}}\left(t,f;\ell^{p},\ell^{r}\right),\;\;t>0, 
\end{equation*}
which contradicts \eqref{th_main_k_estimate}.
\end{proof}

\begin{proof}[Proof of Theorem \protect\ref{Th:No-CM-Property}]
Let $0\leq p<q<1$. We need to show that every $g\in\ell^{q}\setminus\ell^{p}$ is not a CM-element with respect to the couple $(\ell^{p},\ell^{q})$.

Given $g\in\ell^{q}\setminus\ell^{p}$ we select $f\in\ell^{q}$ as in Theorem 
\ref{Th-main-estimate} with $r=1$. Then, reasoning in the same way as in the
proof of the preceding corollary, we conclude that for each $s\in[0,q)$
inequality \eqref{K-func ineq1} holds with some constant $C_{q,s}$.

On the other hand, we claim that 
\begin{equation}
g\notin{\mathrm{Orb}}\left(f;\ell^{p},\ell^{r^{\prime}}\right)\;\;\mbox{for
every}\;\;r^{\prime}\in\lbrack q,1).  \label{K-func ineq1extra}
\end{equation}
Indeed, to get the contrary, assume that $g\in{\mathrm{Orb}}\left(f,\ell^{p},
\ell^{r^{\prime}}\right)$ for some $r^{\prime}\in\lbrack q,1)$. This means
that there exists an operator $V\in\mathfrak{L}(\ell^{p},\ell^{r^{\prime}})$
such that $Vf=g.$ Since $r^{\prime}<1$, by Theorem \ref{Th-bounded-lp}, $V$
has a linear extension $\widetilde{V}:\,\ell^{1}\rightarrow\ell^{1}$ such
that $\Vert\widetilde{V}\Vert_{\ell^{1}\rightarrow\ell^{1}}\leq\Vert{V}
\Vert_{\ell^{r^{\prime}}\rightarrow\ell^{r^{\prime}}}$. Then, 
\begin{equation*}
{\mathcal{K}}\left(t,g;\ell^{p},\ell^{1}\right)={\mathcal{K}}(t,\widetilde{V}
f;\ell^{p},\ell^{1})\leq\left\Vert V\right\Vert _{\mathfrak{L}
(\ell^{p},\ell^{r^{\prime}})}{\mathcal{K}}\left(t,f;\ell^{p},\ell^{1}
\right),\;\;t>0, 
\end{equation*}
which is a contradiction in view of \eqref{th_main_k_estimate} (with $r=1$).

Now, setting $s=p$ and $r^{\prime}=q$ in \eqref{K-func ineq1} and 
\eqref{K-func ineq1extra} respectively, we see that every element $
g\in\ell^{q}\setminus\ell^{p}$ fails to be a CM-element with respect to the
couple $\left(\ell^{p},\ell^{q}\right)$. Therefore, by \eqref{CM-emb}, we have $CM\left(\ell^{p},\ell^{q}\right)=\ell^{p}.$

The second assertion of the theorem that $\left(\ell^{p},\ell^{q}\right)$ is
not a Calder\'{o}n-Mityagin couple is now almost obvious. Indeed, take for $g$
any element from $\ell^{q}\setminus\ell^{p}$ (say, $g=(n^{-\sigma})_{n=1}^{
\infty}$, where $\sigma\in(1/q,1/p)$). Since $g\not\in
CM\left(\ell^{p},\ell^{q}\right)$, there is $f\in\ell^{q}$ such that $g\in{
\mathcal{K}}-{\mathrm{Orb}}\left(f;\ell^{p},\ell^{q}\right)\setminus{\mathrm{
Orb}}\left(f;\ell^{p},\ell^{q}\right)$. Clearly, this yields the desired
result.
\end{proof}

\vskip 0.5cm

Now, we proceed with the proof of Theorem \ref{Th-main-estimate}.

Let $a,b$ be arbitrary positive integers. Let $T_{a,b}$ be the linear map
defined on every function $h:\,[0,\infty)\to\mathbb{R}$ as follows: 
\begin{equation*}
T_{a,b}\left(h\right)(x):=\left\{ 
\begin{array}{c}
h\left(x\right)\;\;\mbox{if}\;\;0<x<a \\ 
b^{-1/q}h\left(a+\frac{x-a}{b}\right)\;\;\mbox{if}\;\;a\leq x<\infty.
\end{array}
\right. 
\end{equation*}
Denote by $\Phi$ the collection of all functions $h:\,[0,\infty)\to\mathbb{R}
$ which are constant on each interval $\left[n-1,n\right)$, $n\in\mathbb{N}.$
Then $T_{a,b}:\Phi\rightarrow\Phi$. Moreover, if $h$ is a nonnegative and
nonincreasing function, then $T_{a,b}h$ is also nonnegative and
nonincreasing.

Further properties of operators $T_{a,b}$ we will need are collected in
the following lemma.

\begin{lemma}
\label{Lemma_tab} Assume that $0<p<q<r<\infty$ and $h\in\Phi$ is a
nonnegative, nonincreasing function. Then for any $a,b\in\mathbb{N}$ 
\begin{equation}
\lim_{b\rightarrow\infty}\int\nolimits
_{0}^{t}\left(T_{a,b}h\left(x\right)\right)^{p}\,dx=\int\nolimits
_{0}^{a}h\left(x\right)^{p}\,dx,\;\;t>a,  \label{lemma_tab_1}
\end{equation}
\begin{equation}
\int\nolimits
_{t}^{\infty}\left(T_{a,b}h\left(x\right)\right)^{q}\,dx\geq\int\nolimits
_{t}^{\infty}h\left(x\right)^{q}\,dx,\;\;t>0,  \label{lemma_tab_2}
\end{equation}
\begin{equation}
\int\nolimits
_{t}^{\infty}\left(T_{a,b}h\left(x\right)\right)^{q}\,dx=\int\nolimits
_{t}^{\infty}h\left(x\right)^{q}\,dx,\;\;0\leq t\leq a.  \label{lemma_tab_3}
\end{equation}

If additionally $h\in L^{r}\left(0,\infty\right)$, then $T_{a,b}h\in
L^{r}\left(0,\infty\right)$, and 
\begin{equation}
\lim_{b\rightarrow\infty}\int\nolimits
_{a}^{\infty}\left(T_{a,b}h\left(x\right)\right)^{r}\,dx=0.
\label{lemma_tab_4}
\end{equation}

If $\int\nolimits _{t}^{\infty}h\left(x\right)^{r}\,dx>0$ for all $t>0$,
then we have 
\begin{equation}
\int\nolimits
_{t}^{\infty}\left(T_{a,b}h\left(x\right)\right)^{r}\,dx\leq\int\nolimits
_{t}^{\infty}h\left(x\right)^{r}\,dx  \label{lemma_tab_5}
\end{equation}
whenever a positive integer $b$ satisfies the condition 
\begin{equation}
b>\left(\frac{\int\nolimits _{a}^{\infty}h\left(y\right)^{r}dy}{
\int\nolimits _{t}^{\infty}h\left(y\right)^{r}dy}\right)^{\frac{q}{r-q}}.
\label{lemma_tab_5a}
\end{equation}
\end{lemma}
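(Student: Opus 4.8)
The plan is to reduce every assertion to the single change of variables $y=a+(x-a)/b$ on the ray $[a,\infty)$, exploiting that $T_{a,b}h$ coincides with $h$ on $[0,a)$. Under this substitution $dx=b\,dy$, the ray $\{x\ge a\}$ corresponds to $\{y\ge a\}$ and, for $t\ge a$, the ray $\{x\ge t\}$ corresponds to $\{y\ge a+(t-a)/b\}$; hence for every exponent $\rho>0$ and every $t\ge a$,
\begin{equation*}
\int_t^\infty\bigl(T_{a,b}h(x)\bigr)^\rho\,dx
= b^{\,1-\rho/q}\int_{a+(t-a)/b}^\infty h(y)^\rho\,dy ,
\end{equation*}
and in particular (taking $t=a$) $\int_a^\infty\bigl(T_{a,b}h\bigr)^\rho\,dx=b^{\,1-\rho/q}\int_a^\infty h^\rho\,dy$. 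The amplitude factor $b^{-1/q}$ is chosen precisely so that the constant $b^{\,1-\rho/q}$ equals $1$ at $\rho=q$. This already yields \eqref{lemma_tab_3}: for $0\le t\le a$ split $\int_t^\infty\bigl(T_{a,b}h\bigr)^q=\int_t^a h^q+\int_a^\infty\bigl(T_{a,b}h\bigr)^q=\int_t^a h^q+\int_a^\infty h^q$. For $t\ge a$ the displayed identity with $\rho=q$ gives $\int_t^\infty\bigl(T_{a,b}h\bigr)^q=\int_{a+(t-a)/b}^\infty h^q\ge\int_t^\infty h^q$, since $a+(t-a)/b\le t$ whenever $b\ge1$; together with the previous case this is \eqref{lemma_tab_2}.

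For \eqref{lemma_tab_4} take $\rho=r$ in the displayed formula: since $r>q$ one has $b^{\,1-r/q}\to0$, while $\int_a^\infty h^r<\infty$ because $h\in L^r$, so $\int_a^\infty\bigl(T_{a,b}h\bigr)^r=b^{\,1-r/q}\int_a^\infty h^r\to0$; membership $T_{a,b}h\in L^r$ is then clear, as $T_{a,b}h$ agrees with $h\in L^r$ on $[0,a)$. For \eqref{lemma_tab_5}, consider first $t\ge a$: bounding the inner integral by $\int_{a+(t-a)/b}^\infty h^r\le\int_a^\infty h^r$ (the lower limit is $\ge a$) gives $\int_t^\infty\bigl(T_{a,b}h\bigr)^r\le b^{\,1-r/q}\int_a^\infty h^r$, and demanding that this not exceed $\int_t^\infty h^r$ is, after solving for $b$ using $1-r/q<0$, exactly the condition \eqref{lemma_tab_5a}; the hypothesis $\int_t^\infty h^r>0$ makes the ratio in \eqref{lemma_tab_5a} finite. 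For $t<a$ one instead writes $\int_t^\infty\bigl(T_{a,b}h\bigr)^r=\int_t^a h^r+b^{\,1-r/q}\int_a^\infty h^r\le\int_t^a h^r+\int_a^\infty h^r=\int_t^\infty h^r$ since $b^{\,1-r/q}\le1$; note that in this regime \eqref{lemma_tab_5a} is vacuous, because $\int_t^\infty h^r\ge\int_a^\infty h^r$.

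The only delicate assertion is \eqref{lemma_tab_1}, where the scaling constant $b^{\,1-p/q}$ now tends to $+\infty$. Here the hypothesis $h\in\Phi$ enters: since $a\in\mathbb{N}$, $h$ is constant on $[a,a+1)$, say $h\equiv c$ there, so for every $b$ large enough that $(t-a)/b<1$,
\begin{equation*}
\int_a^t\bigl(T_{a,b}h(x)\bigr)^p\,dx
= b^{\,1-p/q}\int_a^{a+(t-a)/b}h(y)^p\,dy
= c^p(t-a)\,b^{-p/q},
\end{equation*}
which tends to $0$ as $b\to\infty$ because $p>0$; adding the untouched term $\int_0^a\bigl(T_{a,b}h\bigr)^p=\int_0^a h^p$ gives \eqref{lemma_tab_1}. (Monotonicity of $h$ alone would suffice, bounding the integrand on $[a,a+1)$ by $h(a+0)^p$.) I do not expect any real obstacle here: the whole argument is bookkeeping, the one thing to watch being to keep the two regimes $t\ge a$ and $0\le t<a$ separate and to check, as above, that the threshold on $b$ in \eqref{lemma_tab_5a} is harmless — indeed vacuous — in the second regime.
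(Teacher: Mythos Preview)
Your proof is correct and follows essentially the same approach as the paper's: both reduce every item to the change of variables $y=a+(x-a)/b$ on $[a,\infty)$, split at $x=a$ when $t<a$, and track the factor $b^{1-\rho/q}$ for $\rho\in\{p,q,r\}$. The only cosmetic differences are that you state the general change-of-variables identity once and specialize, whereas the paper treats each exponent separately, and for \eqref{lemma_tab_1} you use the constancy of $h$ on $[a,a+1)$ coming from $h\in\Phi$, while the paper simply bounds the integrand on $[a,t]$ by $h(0)^p$ via monotonicity (you note this alternative yourself).
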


We postpone the proof of this lemma, which is a series of elementary
calculations, until the end of this section.

\begin{proof}[Proof of Theorem \protect\ref{Th-main-estimate}]
As above, for some technical purposes, it will be convenient to consider,
instead of sequences $h=\left(h_{n}\right)_{n=1}^{\infty}$, the step
functions $\widetilde{h}(t)$ defined for $t>0$ by 
\begin{equation*}
\widetilde{h}(t):=\sum_{n=1}^{\infty}h_{n}\chi_{\lbrack n-1,n)}(t). 
\end{equation*}

We will use further an infinite composition of suitable operators of the
form $T_{a,b}$ applied to the function $\widetilde{g}=\sum_{n=1}^{
\infty}g_{n}\chi_{[n-1,n)}$ to obtain as a result the nonnegative and
nonincreasing function $\widetilde{f}\in\Phi$ such that the corresponding
sequence $f=\left(f_{n}\right)_{n=1}^{\infty}$ will possess the required
properties \eqref{th_main_estimate_lp}, \eqref{th_main_estimate_mon} and 
\eqref{th_main_k_estimate}.

Let $\left(a_{n}\right)_{n=1}^{\infty}$ and $\left(b_{n}\right)_{n=1}^{
\infty}$ be two sequences of positive integers such that the sequence $
\left(a_{n}\right)_{n=1}^{\infty}$ is strictly increasing and hence $
\lim_{n\rightarrow\infty}a_{n}=\infty.$ Next, choosing these sequences in a
special way, we consider the sequence of functions $\left(G_{n}
\right)_{n=1}^{\infty}$ defined by $G_{1}=T_{a_{1},b_{1}}(\widetilde{g})$, $G_{n}=T_{a_{n},b_{n}}\left(G_{n-1}\right)$ for $n\geq2$. Then, by the
definition of $T_{a,b}$, 
\begin{equation}
G_{n}\left(x\right)=G_{m}\left(x\right),\;\;\mbox{for each}\;\;n\geq m\;
\mbox{ and for all}\;x\in\left[0,a_{m+1}\right).  \label{extra1}
\end{equation}
Since $\bigcup_{m=1}^{\infty}\left[0,a_{m+1}\right)=[0,\infty)$, there
exists the pointwise limit 
\begin{equation*}
\widetilde{f}\left(x\right):=\lim_{n\rightarrow\infty}G_{n}\left(x\right),\;
\;x>0. 
\end{equation*}
From \eqref{extra1} it follows that 
\begin{equation}
\widetilde{f}\left(x\right)=G_{n}\left(x\right),\;\;\mbox{for each}
\;\;n\ge m\;\;\mbox{if}\;\;x\in\left[0,a_{m+1}\right).  \label{extra2}
\end{equation}
It is clear also that $\widetilde{f}\in\Phi$ and it is a nonnegative and
nonincreasing function. Now, the main our task is to show that, if the
auxiliary sequences $\left(a_{n}\right)_{n=1}^{\infty}$ and $
\left(b_{n}\right)_{n=1}^{\infty}$ are constructed suitably, the sequence $
\left(f_{n}\right)_{n=1}^{\infty}$ of values of $\widetilde{f}$ will satisfy
the conditions \eqref{th_main_estimate_lp}, \eqref{th_main_estimate_mon} and 
\eqref{th_main_k_estimate}.

At this stage we will assume that $0<p<r<\infty$. Let us establish first
some properties of the functions $G_{n}$ and $\widetilde{f}$.

Since $g\in\ell^{q}$, from Lemma \ref{Lemma_tab} (see \eqref{lemma_tab_3})
it follows that $G_{n}\in L^{q}\left(0,\infty\right)$ and 
\begin{equation}
\int\nolimits _{0}^{\infty}G_{n}\left(x\right)^{q}\,dx=\int\nolimits
_{0}^{\infty}\widetilde{g}\left(x\right)^{q}\,dx=\sum_{m=1}^{
\infty}g_{m}^{q},\;\;n\in\mathbb{N}.  \label{extra3}
\end{equation}
Consequently, for every fixed positive integer $n$ we have 
\begin{equation*}
\lim_{y\rightarrow\infty}\int\nolimits _{y}^{\infty}G_{n}(x)^{q}\,dx=0, 
\end{equation*}
and hence the set 
\begin{equation*}
\left\{ m\in\mathbb{N}:\,\int\nolimits
_{m}^{\infty}G_{n}\left(x\right)^{q}\,dx\leq\frac{1}{n}\right\} 
\end{equation*}
is non-empty. Let 
\begin{equation}
\gamma_{n}:=\min\left\{ m\in\mathbb{N}:\,\int\nolimits
_{m}^{\infty}G_{n}\left(x\right)^{q}\,dx\leq\frac{1}{n}\right\} .
\label{extra6}
\end{equation}

Next, by \eqref{extra2}, for each $t\ge0$ and all $n$ satisfying $
a_{n+1}\geq t$, we have 
\begin{eqnarray}
\int\nolimits _{t}^{\infty}G_{n}\left(x\right)^{q}\,dx & = & \int\nolimits
_{t}^{a_{n+1}}G_{n}\left(x\right)^{q}\,dx+\int\nolimits
_{a_{n+1}}^{\infty}G_{n}\left(x\right)^{q}\,dx  \label{Th_est_1} \\
& = & \int\nolimits _{t}^{a_{n+1}}\widetilde{f}\left(x\right)^{q}\,dx+\int
\nolimits _{a_{n+1}}^{\infty}G_{n}\left(x\right)^{q}\,dx.  \notag
\end{eqnarray}
Observe that for each integer $n\geq2$ the function $G_{n}$ depends only on
the given function $\widetilde{g}$ and previously chosen $a_{k},b_{k}$, with 
$1\leq k\leq n.$ Consequently, after completing the first $n$ steps, we may
select $a_{n+1}$ so that 
\begin{equation*}
a_{n+1}\geq\gamma_{n}, 
\end{equation*}
where $\gamma_{n}$ is the positive integer defined in \eqref{extra6}. Then, 
\begin{equation*}
\int\nolimits _{a_{n+1}}^{\infty}G_{n}\left(x\right)^{q}\,dx\leq\frac{1}{n}, 
\end{equation*}
and hence, passing to the limit as $n$ tends to infinity in \eqref{Th_est_1}
, we get 
\begin{equation}
\int\nolimits _{t}^{\infty}\widetilde{f}\left(x\right)^{q}\,dx=\lim_{n
\rightarrow\infty}\int\nolimits _{t}^{\infty}G_{n}\left(x\right)^{q}\,dx\;\;
\mbox{for each}\;t\ge0.  \label{Th_est_2}
\end{equation}
In particular, setting $t=0$, by \eqref{extra3}, we have 
\begin{equation*}
\sum_{m=1}^{\infty}f_{m}^{q}=\int\nolimits _{0}^{\infty}\widetilde{f}
\left(x\right)^{q}\,dx=\int\nolimits _{0}^{\infty}\widetilde{g}
\left(x\right)^{q}\,dx=\sum_{m=1}^{\infty}g_{m}^{q}<\infty. 
\end{equation*}
Thus, the first desired condition, \eqref{th_main_estimate_lp}, is
established.

Furthermore, repeated applications of \eqref{lemma_tab_2} imply that 
\begin{equation*}
\int\nolimits _{t}^{\infty}G_{n}\left(x\right)^{q}\,dx\geq\int\nolimits
_{t}^{\infty}\widetilde{g}\left(x\right)^{q}\,dx\;\;\mbox{for every}\;\;n\in
\mathbb{N}\;\;\mbox{and all}\;\;t>0. 
\end{equation*}
Hence, taking limits and using \eqref{Th_est_2} yields 
\begin{equation*}
\int\nolimits _{t}^{\infty}\widetilde{f}\left(x\right)^{q}\,dx\geq\int
\nolimits _{t}^{\infty}\widetilde{g}\left(x\right)^{q}\,dx,\;\;t>0, 
\end{equation*}
which implies that 
\begin{equation*}
\sum_{j=m}^{\infty}g_{j}^{q}\leq\sum_{j=m}^{\infty}f_{j}^{q}\;\;\mbox{for
all}\;m\in\mathbb{N}. 
\end{equation*}
Now, recalling that for each nonnegative, nonincreasing sequence $
h=\left(h_{n}\right)_{n=1}^{\infty}$ we have 
\begin{equation*}
{\mathcal{K}}(t,h;\ell^{0},\ell^{q})=\inf_{m=0,1,2,\dots}\left(m+t\cdot
\mathcal{E}(m,h;\ell^{0},\ell^{q})\right),\;\;t>0, 
\end{equation*}
where 
\begin{equation*}
\mathcal{E}(m,h;\ell^{0},\ell^{q})=\left(\sum_{j=m+1}^{\infty}h_{j}^{q}
\right)^{1/q} 
\end{equation*}
(see e.g. \cite[§\thinspace 7.1]{BL76} or formulas \eqref{EQ13d} and 
\eqref{EQ13ddd} in Section \ref{Holmstedt}), we conclude that the second
required condition \eqref{th_main_estimate_mon} holds as well.

Thus, it remains only to prove (imposing suitable additional hypotheses on
the sequences $\left(a_{n}\right)_{n=1}^{\infty}$ and $\left(b_{n}
\right)_{n=1}^{\infty}$) the last required condition 
\eqref{th_main_k_estimate}. Here, we will use the assumption that $
g\notin\ell^{p}$, and so 
\begin{equation}
\widetilde{g}\notin L^{p}\left(0,\infty\right).  \label{extra4}
\end{equation}
Along with the previously introduced sequence $\left(\gamma_{n}
\right)_{n=1}^{\infty}$ we will need a new sequence $\left(\delta_{n}
\right)_{n=1}^{\infty}$. For definiteness, we set $a_{1}=b_{1}=\delta_{1}=1$
and hence $G_{1}=T_{1,1}\widetilde{g}=\widetilde{g}.$ Moreover, as was
specified above, $\gamma_{1}$ is the least positive integer $m$ satisfying
the inequality $\int_{m}^{\infty}G_{1}(x)^{q}\,dx\le1$.

Suppose that $n\geq1$ and $a_{k},b_{k},\delta_{k},G_{k},\gamma_{k}$ are
determined for all $1\leq k\leq n.$ To pass to the next step, we first set 
\begin{equation}
a_{n+1}:=\max\left(\gamma_{n},\delta_{n},a_{n}+1\right).  \label{th_def_new}
\end{equation}
Let now $\delta_{n+1}$ be the smallest positive integer with the following
properties: 
\begin{equation}
\delta_{n+1}>na_{n+1}  \label{extra imp}
\end{equation}
and 
\begin{equation*}
\int\nolimits _{0}^{\delta_{n+1}}\widetilde{g}\left(x\right)^{p}\,dx
\geq2n^{p}\int\nolimits _{0}^{a_{n+1}}G_{n}\left(x\right)^{p}\,dx. 
\end{equation*}
Observe that such an integer exists because of \eqref{extra4}. Therefore, by 
\eqref{lemma_tab_1}, we infer 
\begin{equation}
\lim_{b\rightarrow\infty}\int\nolimits
_{0}^{\delta_{n+1}}\left(T_{a_{n+1},b}G_{n}\left(x\right)\right)^{p}\,dx=
\int\nolimits _{0}^{a_{n+1}}G_{n}\left(x\right)^{p}\,dx\leq\frac{1}{2n^{p}}
\int\nolimits _{0}^{\delta_{n+1}}\widetilde{g}\left(x\right)^{p}\,dx.
\label{Th_est_3}
\end{equation}

Combining the fact that $\widetilde{g}$ is nonincreasing with \eqref{extra4}, we see that $\widetilde{g}\left(x\right)>0$ and hence 
\begin{equation*}
\int\nolimits _{0}^{\delta_{n+1}}\widetilde{g}\left(x\right)^{p}\,dx>0. 
\end{equation*}
Moreover, since the functions $\widetilde{g}$ and $G_{n}$ belong to the
intersection $\Phi\cap L^{q}\left(0,\infty\right)$, the embedding $
\ell^{q}\subset\ell^{r}$ ensures that 
\begin{equation}
\widetilde{g}\in L^{r}\left(0,\infty\right)\;\;\mbox{and}\;\;G_{n}\in
L^{r}\left(0,\infty\right).  \label{extra5}
\end{equation}
Then, using \eqref{lemma_tab_4} with the function $G_{n}$ and the number $
a_{n+1}$ instead of $h$ and $a$ respectively, we conclude 
\begin{equation}
\lim_{b\rightarrow\infty}\int\nolimits
_{a_{n+1}}^{\infty}\left(T_{a_{n+1},b}G_{n}\left(x\right)\right)^{r}\,dx=0
\label{Th_est_5}
\end{equation}

As was noted, the function $\widetilde{g}$ is strictly positive at all
points of $(0,\infty)$. Hence, $G_{n}\left(x\right)>0$ if $x>0.$ Therefore,
in view of \eqref{extra5}, we can apply the last result of Lemma \ref
{Lemma_tab}, according to that, for each $t>0$, every positive integer $b$,
satisfying the condition 
\begin{equation*}
b>\left(\frac{\int\nolimits _{a_{n+1}}^{\infty}G_{n}\left(y\right)^{r}\,dy}{
\int\nolimits _{t}^{\infty}G_{n}\left(y\right)^{r}\,dy}\right)^{q/\left(r-q
\right)}, 
\end{equation*}
satisfies also the inequality 
\begin{equation*}
\int\nolimits
_{t}^{\infty}\left(T_{a_{n+1},b}G_{n}\left(x\right)\right)^{r}\,dx\leq\int
\nolimits _{t}^{\infty}G_{n}\left(x\right)^{r}\,dx. 
\end{equation*}
Thus, from the condition 
\begin{equation*}
b>\left(\frac{\int\nolimits _{a_{n+1}}^{\infty}G_{n}\left(y\right)^{r}dy}{
\int\nolimits _{a_{n}}^{\infty}G_{n}\left(y\right)^{r}dy}\right)^{q/
\left(r-q\right)} 
\end{equation*}
and the fact that $a_{k}<a_{n}$ for $k<n$ it follows that 
\begin{equation}
\int\nolimits
_{a_{k}}^{\infty}\left(T_{a_{n+1},b}G_{n}\left(x\right)\right)^{r}\,dx\leq
\int\nolimits _{a_{k}}^{\infty}G_{n}\left(x\right)^{r}\,dx\;\;\mbox{for all}
\;\;1\leq k\leq n.  \label{Th_est_4}
\end{equation}
Summarizing all, we see that, in view of \eqref{Th_est_4},  \eqref{Th_est_3} and \eqref{Th_est_5}, it is possible to select an integer $b_{n+1}$ such
that the function $G_{n+1}=T_{a_{n+1},b_{n+1}}(G_{n})$ satisfies the following conditions: 
\begin{eqnarray}
\int\nolimits _{a_{k}}^{\infty}G_{n+1}\left(x\right)^{r}\,dx & \leq &
\int\nolimits _{a_{k}}^{\infty}G_{n}\left(x\right)^{r}\,dx\;\;\mbox{for all}
\;1\leq k\leq n,  \label{Th_est_6} \\
\int\nolimits _{0}^{\delta_{n+1}}G_{n+1}\left(x\right)^{p}\,dx & \leq & 
\frac{1}{n^{p}}\int\nolimits _{0}^{\delta_{n+1}}\widetilde{g}
\left(x\right)^{p}\,dx,  \label{Th_est_7} \\
\int\nolimits _{a_{n+1}}^{\infty}G_{n+1}\left(x\right)^{r}\,dx & \leq & 
\frac{1}{n^{r}}\int\nolimits _{\delta_{n+1}}^{\infty}\widetilde{g}
\left(x\right)^{r}\,dx.  \label{Th_est_8}
\end{eqnarray}

Having chosen $a_{n+1},\delta_{n+1},b_{n+1}$ and so $G_{n+1}$, we determine
then the number $\gamma_{n+1}$ by formula \eqref{extra6}. This means that
the step $n+1$ is completed. Proceeding in the same way, we obtain the
sequences $(a_{n})_{n=1}^{\infty}$, $(\delta_{n})_{n=1}^{\infty}$, $
(b_{n})_{n=1}^{\infty}$, $(G_{n})_{n=1}^{\infty}$ and $(\gamma_{n})_{n=1}^{
\infty}$ (in view of \eqref{extra4}, this process does not finish after only
finitely many steps). Moreover, as was observed above, there exists the
pointwise limit $\widetilde{f}$ of the sequence $\{G_{n}\}_{n=1}^{\infty}$
on $(0,\infty)$. Note that, thanks to the choice of $a_{n+1}$ determined by 
\eqref{th_def_new}, the reasoning from the first part of the proof shows
that the sequence $(f_{n})_{n=1}^{\infty}$ of values of the function $
\widetilde{f}$ satisfies conditions \eqref{th_main_estimate_lp} and 
\eqref{th_main_estimate_mon}. Therefore, it remains only to deduce 
\eqref{th_main_k_estimate}.

By \eqref{Th_est_7}, \eqref{extra2} and the inequality $\delta_{n+1}\leq
a_{n+2}$ (see \eqref{th_def_new}), it follows that 
\begin{equation}
\int\nolimits _{0}^{\delta_{n+1}}\widetilde{f}\left(x\right)^{p}\,dx\leq
\frac{1}{n^{p}}\int\nolimits _{0}^{\delta_{n+1}}\widetilde{g}
\left(x\right)^{p}\,dx.  \label{Th_est_9}
\end{equation}
Moreover, applying \eqref{Th_est_6}, for each integer $n$ and $m\geq n+1$ we
have 
\begin{equation*}
\int\nolimits
_{a_{n+1}}^{\infty}G_{m+1}\left(x\right)^{r}\,dx\leq\int\nolimits
_{a_{n+1}}^{\infty}G_{m}\left(x\right)^{r}\,dx, 
\end{equation*}
and hence by iteration 
\begin{equation*}
\int\nolimits
_{a_{n+1}}^{\infty}G_{m}\left(x\right)^{r}\,dx\leq\int\nolimits
_{a_{n+1}}^{\infty}G_{n+1}\left(x\right)^{r}\,dx\;\;\mbox{for each}\;m\geq
n+1. 
\end{equation*}
Consequently, for each $R\geq a_{n+1}$ and all $m\geq n+1$, in view of $
\eqref{Th_est_8}$, we infer that 
\begin{equation*}
\int\nolimits _{a_{n+1}}^{R}G_{m}\left(x\right)^{r}\,dx\leq\int\nolimits
_{a_{n+1}}^{\infty}G_{n+1}\left(x\right)^{r}\,dx\leq\frac{1}{n^{r}}
\int\nolimits _{\delta_{n+1}}^{\infty}\widetilde{g}\left(x\right)^{r}\,dx. 
\end{equation*}
Since $0\leq G_{m}\left(x\right)\leq\widetilde{g}\left(0\right)$ for all $
x\geq0$ and $m\in\mathbb{N}$, using dominated convergence on the interval $
\left[a_{n+1},R\right]$, we have 
\begin{equation*}
\int\nolimits _{a_{n+1}}^{R}\widetilde{f}\left(x\right)^{r}\,dx=\lim_{m
\rightarrow\infty}\int\nolimits _{a_{n+1}}^{R}G_{m}\left(x\right)^{r}\,dx\leq
\frac{1}{n^{r}}\int\nolimits _{\delta_{n+1}}^{\infty}\widetilde{g}
\left(x\right)^{r}\,dx, 
\end{equation*}
and then, passing to the limit as $R\rightarrow\infty$, we obtain 
\begin{equation}
\int\nolimits _{a_{n+1}}^{\infty}\widetilde{f}\left(x\right)^{r}\,dx\leq
\frac{1}{n^{r}}\int\nolimits _{\delta_{n+1}}^{\infty}\widetilde{g}
\left(x\right)^{r}\,dx.  \label{Th_est_10}
\end{equation}

Let $1/\alpha=1/p-1/r$. By the Holmstedt formula \eqref{Holmstedt1}, from \eqref{extra imp}, 
\eqref{Th_est_9} and \eqref{Th_est_10} it follows that\textbf{\ } 
\begin{eqnarray*}
& & {\mathcal{K}}(\delta_{n+1}^{1/\alpha},\widetilde{f};L^{p}(0,
\infty),L^{r}(0,\infty)) \\
& \leq & \left(\int\nolimits _{0}^{\delta_{n+1}}\widetilde{f}
\left(x\right)^{p}\,dx\right)^{1/p}+\delta_{n+1}^{1/\alpha}\left(\int
\nolimits _{\delta_{n+1}}^{\infty}\widetilde{f}\left(x\right)^{r}\,dx
\right)^{1/r} \\
& \leq & \left(\frac{1}{n^{p}}\int\nolimits _{0}^{\delta_{n+1}}\widetilde{g}
\left(x\right)^{p}\,dx\right)^{1/p}+\delta_{n+1}^{1/\alpha}\left(\frac{1}{
n^{r}}\int\nolimits _{\delta_{n+1}}^{\infty}\widetilde{g}\left(x\right)^{r}
\,dx\right)^{1/r} \\
& = & \frac{1}{n}\left(\left(\int\nolimits _{0}^{\delta_{n+1}}\widetilde{g}
\left(x\right)^{p}\,dx\right)^{1/p}+\delta_{n+1}^{1/\alpha}\left(\int
\nolimits _{\delta_{n+1}}^{\infty}\widetilde{g}\left(x\right)^{r}\,dx
\right)^{1/r}\right) \\
& \leq & \frac{C_{p,r}}{n}{\mathcal{K}}(\delta_{n+1}^{1/\alpha},\widetilde{g}
;L^{p}(0,\infty),L^{r}(0,\infty)),\;n=1,2,\dots
\end{eqnarray*}
Hence, for the corresponding sequences $f$ and $g$ it holds 
\begin{equation*}
\lim_{n\rightarrow\infty}\frac{{\mathcal{K}}\left(\delta_{n+1}^{1/\alpha},f;
\ell^{p},\ell^{r}\right)}{{\mathcal{K}}\left(\delta_{n+1}^{1/\alpha},g;
\ell^{p},\ell^{r}\right)}=0. 
\end{equation*}
Since $\delta_{n+1}>a_{n+1}$, then \eqref{th_def_new} ensures that $
\delta_{n+1}\rightarrow0$ as $n\rightarrow\infty$. Thus, relation 
\eqref{th_main_k_estimate} is established, and thus the proof of the theorem
in the case when $0<p<r<\infty$ is completed.

Consider now the case $0<p<r=\infty$. The only change which should be made
in the above construction of the sequences $\left(a_{n}\right)_{n=1}^{\infty}
$, $\left(\delta_{n}\right)_{n=1}^{\infty}$, $\left(b_{n}\right)_{n=1}^{
\infty}$, $\left(G_{n}\right)_{n=1}^{\infty}$ and $\left(\gamma_{n}
\right)_{n=1}^{\infty}$ is that in the step $n+1$ the value of $b_{n+1}$ is
required to satisfy only one condition, namely \eqref{Th_est_7}. Clearly,
then estimate \eqref{Th_est_9} follows in the same way. Combining it with
the Holmstedt formula \eqref{Holmstedt1a}, 
we get, as above, that 
\begin{equation*}
\lim_{n\rightarrow\infty}\frac{{\mathcal{K}}\left(\delta_{n+1}^{1/p},f;
\ell^{p},\ell^{\infty}\right)}{{\mathcal{K}}\left(\delta_{n+1}^{1/p},g;
\ell^{p},\ell^{\infty}\right)}=0. 
\end{equation*}
Thus, \eqref{th_main_k_estimate} is proved. Since the proofs of 
\eqref{th_main_estimate_lp} and \eqref{th_main_estimate_mon} do not require
any modifications in this case, the result follows.

Let now $0=p<r<\infty$. In this case in the step $n+1$ we choose the
value of $b_{n+1}$ so that only conditions \eqref{Th_est_6} and 
\eqref{Th_est_8} to be satisfied. Then, we obtain \eqref{Th_est_10}, which
being combined with inequality \eqref{extra imp}, gives us 
\begin{equation*}
\int\nolimits _{a_{n+1}}^{\infty}\widetilde{f}\left(x\right)^{r}\,dx\leq
\frac{1}{n^{r}}\int\nolimits _{na_{n+1}}^{\infty}\widetilde{g}
\left(x\right)^{r}\,dx. 
\end{equation*}
Consequently, we have 
\begin{equation}
\sum_{k=a_{n+1}+1}^{\infty}f_{k}^{r}\leq\frac{1}{n^{r}}\sum_{k=na_{n+1}+1}^{
\infty}g_{k}^{r},\;\;n\in\mathbb{N}.  \label{extra7}
\end{equation}

Let us assume that \eqref{th_main_k_estimate} does not hold, i.e., there is a constant $C>0$ such that 
\begin{equation}
{\mathcal{K}}\left(t,g;\ell^{0},\ell^{r}\right)\leq C{\mathcal{K}}
\left(t,f;\ell^{0},\ell^{r}\right)\;\;\mbox{for all}\;\;t>0.  \label{extra9}
\end{equation}
Then, by implication \eqref{impl2} and formula \eqref{EQ13ddd}, 
we have 
\begin{equation*}
\sum_{k=m}^{\infty}g_{k}^{r}\leq (2C)^r\sum_{k=[(m-1)/(2C)]+1}^{\infty}f_{k}^{r},\;\;m\in
\mathbb{N}. 
\end{equation*}
Substituting $m=([2C]+1)a_{n+1}+1$, $n\in\mathbb{N}$, in the
latter inequality, we get 
\begin{equation*}
\sum_{k=([2C]+1)a_{n+1}+1}^{\infty}g_{k}^{r}\leq
(2C)^r\sum_{k=a_{n+1}+1}^{\infty}f_{k}^{r},\;\;n\in\mathbb{N}. 
\end{equation*}
Combining this together with \eqref{extra7}, we come to the estimate 
\begin{equation*}
\sum_{k=([2C]+1)a_{n+1}+1}^{\infty}g_{k}^{r}\leq\left(\frac{2C}{n}\right)^{r}
\sum_{k=na_{n+1}+1}^{\infty}g_{k}^{r},\;\;n\in\mathbb{N}. 
\end{equation*}
Since $g_{k}>0$ for all $k\in\mathbb{N}$, the latter is impossible for
sufficiently large $n$. Therefore, \eqref{extra9} does not hold, and hence
we have \eqref{th_main_k_estimate}. Noting that \eqref{th_main_estimate_lp}
and \eqref{th_main_estimate_mon} are still satisfied, we complete the proof.

Finally, we consider the case when $p=0$ and $r=\infty$. The first part of
our process may be conducted in the same way as earlier. Namely, we can find
sequences of positive integers $\left(a_{n}\right)_{n=1}^{\infty}$, $
\left(b_{n}\right)_{n=1}^{\infty}$ and $\left(\gamma_{n}\right)_{n=1}^{
\infty}$ such that the sequence $\left(a_{n}\right)_{n=1}^{\infty}$ is
strictly increasing, $a_{n+1}\ge\gamma_{n}$ and the pointwise limit $
\widetilde{f}$ of the sequence of functions $\left(G_{n}\right)_{n=1}^{
\infty}$ defined by $G_{1}=T_{a_{1},b_{1}}(\widetilde{g})=\widetilde{g}$, $
G_{n}=T_{a_{n},b_{n}}(G_{n-1})$, $n\geq2$, satisfies conditions 
\eqref{th_main_estimate_lp} and \eqref{th_main_estimate_mon}. To get the
remaining condition \eqref{th_main_k_estimate}, we need somewhat to modify
(in fact, to simplify) the above procedure.

In this case the sequence $\left(\delta_{n}\right)_{n=1}^{\infty}$ is not needed, and we have only to arrange the choice of the sequence $
\left(b_{n}\right)_{n=1}^{\infty}$, which was arbitrary by now.

Since $g\in\ell^{q}\setminus\ell^{0}$, then $\mathrm{card}(\mathrm{supp}
\,g)=\infty$. Therefore, the function $\widetilde{g}(x)$ is strictly
positive for all $x>0$. Suppose that $n\geq1$ and $a_{k},b_{k},G_{k},
\gamma_{k}$ are defined for all $1\leq k\leq n$ (as above, $
a_{1}=b_{1}=\gamma_{1}=1$). To pass to the next step, we set 
\begin{equation}
a_{n+1}:=\max\left(\gamma_{n},a_{n}+1\right)\;\;\mbox{and}\;\;b_{n+1}:=\left[
\left(\frac{(n+1)G_n(a_{n+1})}{\widetilde{g}(n(a_{n+1}+1))}
\right)^{q}\right]+1.  \label{th_def_an_1-extra}
\end{equation}
Then, by the definition of the operators $T_{a,b}$, we have for all $n\ge2$ 
\begin{equation*}
G_{n}(a_{n}+1)\le b_{n}^{-1/q}G_{n-1}(a_{n})\le\frac{1}{n}\widetilde{g}
(n(a_{n}+1)). 
\end{equation*}
Combining this together with \eqref{extra2}, we get 
\begin{equation*}
\widetilde{f}(a_{n}+1)\le\frac{1}{n}\widetilde{g}(n(a_{n}+1)),\;\;n\ge2, 
\end{equation*}
which yields for all $n\ge2$ 
\begin{equation}
\lim\inf_{x\to\infty}\frac{\widetilde{f}(x/n)}{\widetilde{g}(x)}=0.
\label{extra10}
\end{equation}
Let us show that \eqref{extra10} implies \eqref{th_main_k_estimate} in the
case $p=0$ and $r=\infty$.

Assuming the contrary, for some positive $C$ we have
\begin{equation}
{\mathcal{K}}(t,g;\ell^{0},\ell^{\infty})\le C{\mathcal{K}}
(t,f;\ell^{0},\ell^{\infty}),\;\;t>0.  \label{extra11}
\end{equation}
This implies that 
\begin{equation*}
{\mathcal{K}}(t,\widetilde{g};L^{0}(0,\infty),L^{\infty}(0,\infty))\le C{
\mathcal{K}}(t,\widetilde{f};L^{0}(0,\infty),L^{\infty}(0,\infty)),\;\;t>0. 
\end{equation*}
Consequently, from \eqref{impl2} and \eqref{eq1b} it follows 
\begin{equation*}
\widetilde{g}(x)\le2C\widetilde{f}(x/2C)\;\;\mbox{for all}\;x>0. 
\end{equation*}
Since this inequality contradicts relation \eqref{extra10}, inequality 
\eqref{extra11} fails for any $C$. As a result,  \eqref{th_main_k_estimate} holds, and so the proof of Theorem \ref{Th-main-estimate} is completed.
\end{proof}

\begin{proof}[Proof of Lemma \protect\ref{Lemma_tab}]
Recall that $h$ is assumed to be a nonnegative, nonincreasing function in $
\Phi$ (i.e., constant on each interval of the form $\left[n-1,n\right)$, $
n\in\mathbb{N}$).

We begin with proving \eqref{lemma_tab_1}. Fix $a\in\mathbb{N}$ and $t>a.$
Then, for each $b\in\mathbb{N}$, we have 
\begin{eqnarray}
\int\nolimits _{0}^{t}\left(T_{a,b}h\left(x\right)\right)^{p}\,dx & = &
\int\nolimits
_{0}^{a}\left(T_{a,b}h\left(x\right)\right)^{p}\,dx+\int\nolimits
_{a}^{t}\left(T_{a,b}h\left(x\right)\right)^{p}\,dx  \notag \\
& = & \int\nolimits _{0}^{a}h\left(x\right)^{p}\,dx+\int\nolimits
_{a}^{t}\left(T_{a,b}h\left(x\right)\right)^{p}\,dx.  \label{extra 21}
\end{eqnarray}
Since 
\begin{eqnarray*}
0 & \leq & \int\nolimits
_{a}^{t}\left(T_{a.b}h\left(x\right)\right)^{p}\,dx=\int\nolimits
_{a}^{t}b^{-p/q}\left(h\left(a+\frac{x-a}{b}\right)\right)^{p}\,dx \\
& \leq & b^{-p/q}\left(t-a\right)h\left(0\right)^{p},
\end{eqnarray*}
it follows that 
\begin{equation*}
\lim_{b\rightarrow\infty}\int\nolimits
_{a}^{t}\left(T_{a.b}h\left(x\right)\right)^{p}\,dx=0. 
\end{equation*}
Combining this together with \eqref{extra 21}, we get \eqref{lemma_tab_1}.

To obtain \eqref{lemma_tab_2} and \eqref{lemma_tab_3}, note first that, for $
t\geq a$, the change of variables gives us 
\begin{equation}
\int\nolimits
_{t}^{\infty}\left(T_{a,b}h\left(x\right)\right)^{q}\,dx=\int\nolimits
_{a+\left(t-a\right)/b}^{\infty}h\left(x\right)^{q}\,dx.
\label{lemma_tab_est_1}
\end{equation}
Since $b\geq1$ we have that $t\geq a+\left(t-a\right)/b$ whenever $t\geq a$,
which together with \eqref{lemma_tab_est_1} implies \eqref{lemma_tab_2} in
the case $t\geq a.$ 
If $0<t<a$, then using the definition of the operator $T_{a,b}$ and 
\eqref{lemma_tab_est_1} for $t=a$, we obtain that 
\begin{eqnarray*}
\int\nolimits _{t}^{\infty}\left(T_{a,b}h\left(x\right)\right)^{q}\,dx & = &
\int\nolimits
_{t}^{a}\left(T_{a,b}h\left(x\right)\right)^{q}\,dx+\int\nolimits
_{a}^{\infty}\left(T_{a,b}h\left(x\right)\right)^{q}\,dx \\
& = & \int\nolimits _{t}^{a}h\left(x\right)^{q}\,dx+\int\nolimits
_{a}^{\infty}h\left(x\right)^{q}\,dx=\int\nolimits
_{t}^{\infty}h\left(x\right)^{q}\,dx,
\end{eqnarray*}
which establishes \eqref{lemma_tab_3} and also the remaining case of
inequality \eqref{lemma_tab_2}.

Next, for any fixed $a\in\mathbb{N}$, by the same change of variables as
above, we get 
\begin{equation*}
\int\nolimits
_{a}^{\infty}\left(T_{a,b}h\left(x\right)\right)^{r}\,dx=b^{1-r/q}\int
\nolimits _{a}^{\infty}h\left(x\right)^{r}\,dx. 
\end{equation*}
Hence, it follows that if $h\in L^{r}\left(0,\infty\right)$ then $
T_{a,b}h\in L^{r}\left(0,\infty\right)$ for all $a,b\in\mathbb{N}$.
Moreover, we see that \eqref{lemma_tab_4} holds if $h\in
L^{r}\left(0,\infty\right)$, since $0<q<r.$

Finally, we have to show that a positive integer $b$ satisfies 
\eqref{lemma_tab_5} whenever it satisfies \eqref{lemma_tab_5a}. First, for
each $t\in[0,a]$ it holds 
\begin{eqnarray*}
\int\nolimits _{t}^{\infty}\left(T_{a,b}h\left(x\right)\right)^{r}\,dx & = &
\int\nolimits
_{t}^{a}\left(T_{a,b}h\left(x\right)\right)^{r}\,dx+\int\nolimits
_{a}^{\infty}\left(T_{a,b}h\left(x\right)\right)^{r}\,dx \\
& = & \int\nolimits _{t}^{a}h\left(x\right)^{r}\,dx+b^{1-r/q}\int\nolimits
_{a}^{\infty}h\left(x\right)^{r}\,dx \\
& \leq & \int\nolimits _{t}^{\infty}h\left(x\right)^{r}\,dx,
\end{eqnarray*}
and so, for such values of $t$, \eqref{lemma_tab_5} is satisfied by each
positive integer $b$.

Further, we rewrite \eqref{lemma_tab_5a} in the form 
\begin{equation}
b^{1-r/q}\int\nolimits _{a}^{\infty}h\left(x\right)^{r}\,dx\leq\int\nolimits
_{t}^{\infty}h\left(x\right)^{r}\,dx.  \label{lemma_tab_est_2}
\end{equation}
Then, for $t>a$ we have 
\begin{eqnarray*}
\int\nolimits _{t}^{\infty}\left(T_{a,b}h\left(x\right)\right)^{r}\,dx &
\leq & \int\nolimits _{a}^{\infty}\left(T_{a,b}h\left(x\right)\right)^{r}\,dx
\\
& = & b^{1-r/q}\int\nolimits _{a}^{\infty}h\left(x\right)^{r}\,dx \\
& \leq & \int\nolimits _{t}^{\infty}h\left(x\right)^{r}\,dx.
\end{eqnarray*}
Therefore, \eqref{lemma_tab_5} is obtained for all $t>0$, and so the proof
of the lemma is completed.
\end{proof}

\vskip 1cm

\section{About the $S_{q}$-property}

In \cite{CN17}, in connection with the conjecture stated by Levitina,
Sukochev and Zanin (see Theorem \ref{int properties} and the subsequent
discussion), Cwikel and Nilsson have introduced the following notion.

\begin{definition}
Let $q\geq1$ and let $E\neq\{0\}$ be a normed sequence space, $
E\subseteq\ell^{q}.$ Then, $E$ has the \textit{$S_{q}$-property} provided
that there is a constant $C$ if, whenever $x=(x_{n})_{n=1}^{\infty}\in E$
and $y=(y_{n})_{n=1}^{\infty}\in\ell^{q}$ are two sequences, which satisfy
the conditions: 
\begin{equation}
\sum_{n=1}^{\infty}|x_{n}|^{q}=\sum_{n=1}^{\infty}|y_{n}|^{q}
\label{extra15}
\end{equation}
and 
\begin{equation}
\sum_{n=1}^{m}\left(x_{n}^{\ast}\right)^{q}\leq\sum_{n=1}^{m}\left(y_{n}^{
\ast}\right)^{q}\;\;\mbox{for all}\;m\in\mathbb{N},  \label{extra16}
\end{equation}
then it follows that $y\in E$ and $\left\Vert y\right\Vert _{E}\leq
C\left\Vert x\right\Vert _{E}$.
\end{definition}

It is clear that this definition may be extended to a more general situation when $q>0$ and $E$ is a quasi-Banach sequence space.

The following result shows that the $S_{q}$-property of a quasi-Banach
sequence lattice $E$ is closely related to the fact that $E\in
Int\left(\ell^{0},\ell^{q}\right)$.

\begin{theorem}
\label{S_q-prop} Let $0<q<\infty$ and $E$ be a quasi-Banach sequence
lattice. Then, the following conditions are equivalent:

(a) $E$ has the $S_{q}$-property;

(b) $E$ is a uniform ${\mathcal{K}}$-monotone space with respect to the
couple $\left(\ell^{0},\ell^{q}\right)$.

Therefore, from the condition (a) it follows that $E\in
Int\left(\ell^{0},\ell^{q}\right)$. In the case when $q\geq1$, the converse
holds as well, i.e., $E$ has the $S_{q}$-property if and only if $E\in
Int\left(\ell^{0},\ell^{q}\right)$.
\end{theorem}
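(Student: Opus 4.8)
The plan is to prove the equivalence $(a)\Longleftrightarrow(b)$ first, and then extract the two consequences from it together with the earlier results. The key device is the identification ${\mathcal{E}}(t,x;\ell^{0},\ell^{q})=\big(\sum_{k=[t]+1}^{\infty}(x_{k}^{\ast})^{q}\big)^{1/q}$ from \eqref{EQ13ddd} and the formula ${\mathcal{K}}(t,h;\ell^{0},\ell^{q})=\inf_{m\in\mathbb{N}_0}\big(m+t{\mathcal{E}}(m,h;\ell^{0},\ell^{q})\big)$, which translates both the $S_q$-condition and ${\mathcal{K}}$-domination into statements about the tail sums $\sum_{k=m}^{\infty}(h_{k}^{\ast})^{q}$. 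Concretely, I would first record the elementary observation that for nonnegative nonincreasing sequences, the pair of conditions \eqref{extra15}--\eqref{extra16} (i.e.\ equal total $\ell^q$-mass plus domination of the \emph{partial} sums of $x^{\ast}$ by those of $y^{\ast}$) is exactly equivalent to the reverse domination of the \emph{tail} sums, $\sum_{k=m}^{\infty}(x_{k}^{\ast})^{q}\ge\sum_{k=m}^{\infty}(y_{k}^{\ast})^{q}$ for all $m$, which in turn says precisely ${\mathcal{E}}(m,y;\ell^{0},\ell^{q})\le{\mathcal{E}}(m,x;\ell^{0},\ell^{q})$ for all integers $m\ge0$, hence for all real $t\ge0$ since ${\mathcal{E}}(\cdot)$ is a right-continuous step function with jumps at integers.

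For $(b)\Longrightarrow(a)$: assume $E$ is uniform ${\mathcal{K}}$-monotone with constant $C$. Given $x\in E$, $y\in\ell^{q}$ satisfying \eqref{extra15}--\eqref{extra16}, the observation above gives ${\mathcal{E}}(t,y)\le{\mathcal{E}}(t,x)$ for all $t>0$ in the couple $(\ell^{0},\ell^{q})$; by implication \eqref{impl1} we get ${\mathcal{K}}(t,y;\ell^{0},\ell^{q})\le{\mathcal{K}}(t,x;\ell^{0},\ell^{q})$ for all $t>0$, and since $E$ is a lattice we may replace $x,y$ by $x^{\ast},y^{\ast}$ (these have the same ${\mathcal{K}}$-functionals and, $E$ being a lattice with $E\subset\ell^\infty$ forced by $E\subset\ell^q$—actually one uses that $E$ is symmetric, which follows from uniform ${\mathcal{K}}$-monotonicity with respect to $(\ell^0,\ell^q)$, cf.\ \cite[Lemma~1.11]{CSZ}—the same norm). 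Uniform ${\mathcal{K}}$-monotonicity then yields $y\in E$ with $\|y\|_{E}\le C\|x\|_{E}$, which is the $S_q$-property. For $(a)\Longrightarrow(b)$: suppose $E$ has the $S_q$-property with constant $C$, and let $x\in E$, $y\in\ell^{q}$ with ${\mathcal{K}}(t,y;\ell^{0},\ell^{q})\le{\mathcal{K}}(t,x;\ell^{0},\ell^{q})$ for all $t>0$. By \eqref{impl2} with the couple $(\ell^0,\ell^q)$ and \eqref{EQ13ddd} we obtain $\sum_{k=m}^{\infty}(y_{k}^{\ast})^{q}\le 2^{q}\sum_{k=[(m-1)/2]+1}^{\infty}(x_{k}^{\ast})^{q}$; using the dilation operator $D_2$ of \eqref{extra8} exactly as in the discussion following Theorem \ref{int properties} this gives $\sum_{k=m}^{\infty}(y_{k}^{\ast})^{q}\le 2^{q}\sum_{k=m}^{\infty}(D_2 x^{\ast})_{k}^{q}$. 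Now $2D_2 x^{\ast}$ is a nonnegative nonincreasing sequence whose total $\ell^q$-mass is $2^{q}\cdot 2\|x\|_{\ell^q}^q$, and after a harmless normalization to match total masses (replace $y$ by an equimeasurable rearrangement and pad/scale, or directly verify the partial-sum inequality), the tail inequality becomes exactly the hypothesis of the $S_q$-property applied to the pair $(2D_2 x^{\ast},\,y^{\ast})$; hence $y\in E$ and $\|y\|_{E}\le C\|2D_2 x\|_{E}\le 4C\|x\|_{E}$ (using $\|D_2\|_{E\to E}\le 2$ for symmetric $E$), establishing uniform ${\mathcal{K}}$-monotonicity. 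The step I expect to require the most care is this matching of total $\ell^q$-masses when converting a tail-sum domination into the equalities-plus-partial-sums form demanded by the $S_q$-definition: one must either observe that the $S_q$-property is equivalent to its "tail" formulation directly (the cleanest route, proved once at the start), or handle the normalization explicitly.

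Finally, the two asserted consequences are immediate: a uniform ${\mathcal{K}}$-monotone space with respect to $(\ell^{0},\ell^{q})$ is in particular an interpolation space, so $(a)\Rightarrow E\in Int(\ell^{0},\ell^{q})$. Conversely, when $q\ge1$, Corollary \ref{Ar-Cw} says $(\ell^{0},\ell^{q})$ is a \emph{uniform} Calder\'on-Mityagin couple, so by Remark \ref{rem:CMcoupleAndKmontoneSpaces} every $E\in Int(\ell^{0},\ell^{q})$ is automatically uniform ${\mathcal{K}}$-monotone, i.e.\ condition $(b)$ holds, and hence $E$ has the $S_q$-property. This closes the loop and completes the theorem.
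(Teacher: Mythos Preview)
Your plan is correct and follows essentially the same route as the paper in both directions. For the mass-matching step you rightly flag in $(a)\Rightarrow(b)$, the paper resolves it concretely by enlarging the single largest entry of $y^{\ast}$ to form a sequence $z$ with $|y|\le|z|$ and $\sum (z_n^{\ast})^q=\sum (2D_2x^{\ast})_n^q$, so that the tail domination plus equal mass yields the partial-sum inequality \eqref{extra16}; one then applies the $S_q$-property to the pair $(2D_2x^{\ast},z)$ and uses the lattice property to conclude $\|y\|_E\le\|z\|_E\le 4C\|x\|_E$.
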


\begin{proof}
$(a)\;\Longrightarrow\;(b)$. Assume that sequences $x=\left(x_{n}
\right)_{n=1}^{\infty}\in E$ and $y=\left(y_{n}\right)_{n=1}^{\infty}\in
\ell^{q}$ satisfy the condition 
\begin{equation*}
{\mathcal{K}}(t,y;\ell^{0},\ell^{q})\leq{\mathcal{K}}(t,x;\ell^{0},
\ell^{q}),\;t>0. 
\end{equation*}
Then, in the same way as in the end of Section \ref{Section-AC}, we have 
\begin{equation*}
\sum_{n=m}^{\infty}(y_{n}^{*})^{q}\le 2^q\sum_{n=m}^{
\infty}(D_{2}x^{*})_{n}^{q},\;\;m\in\mathbb{N}, 
\end{equation*}
or, denoting $u_{n}=2(D_{2}x^{*})_{n}$, $n=1,2,\dots$, 
\begin{equation}
\sum_{n=m}^{\infty}\left(y_{n}^{\ast}\right)^{q}\leq\sum_{n=m}^{
\infty}u_{n}^{q},\;\;\mbox{for all}\;m\in\mathbb{N}.  \label{extra22}
\end{equation}

Further, we will use a reasoning from the proof of Theorem 5.3 in \cite{CN17}. Since $E\subseteq\ell^{q}$, it follows that $\lim_{n\rightarrow
\infty}y_{n}=0.$ Select $n_{1}\in\mathbb{N}$ with $\left\vert
y_{n_{1}}\right\vert =y_{1}^{\ast}.$ Let $z=\left(z_{n}\right)_{n=1}^{\infty}
$ be a sequence such that $z_{n}=y_{n},n\neq n_{1},$ $z_1^*=|z_{n_1}|\ge y_1^*=|y_{n_1}|$ and $
\sum_{n=1}^{\infty}\left(z_{n}^{\ast}\right)^{q}=\sum_{n=1}^{
\infty}u_{n}^{q}.$ Then, by \eqref{extra22}, for all $m\in\mathbb{N}$ 
\begin{equation*}
\sum_{n=m}^{\infty}\left(z_{n}^{\ast}\right)^{q}\leq\sum_{n=m}^{
\infty}u_{n}^{q}, 
\end{equation*}
and hence we have 
\begin{eqnarray*}
\sum_{n=1}^{m}u_{n}^{q} & = &
\sum_{n=1}^{\infty}u_{n}^{q}-\sum_{n=m+1}^{\infty}u_{n}^{q} \\
& = &
\sum_{n=1}^{\infty}\left(z_{n}^{\ast}\right)^{q}-\sum_{n=m+1}^{
\infty}u_{n}^{q} \\
& \leq &
\sum_{n=1}^{\infty}\left(z_{n}^{\ast}\right)^{q}-\sum_{n=m+1}^{\infty}
\left(z_{n}^{\ast}\right)^{q}=\sum_{n=1}^{m}\left(z_{n}^{\ast}\right)^{q}.
\end{eqnarray*}
It is clear that every quasi-Banach sequence lattice, satisfying the $S_{q}$-property, is symmetric. Therefore, $u=(u_{n})\in E$ and $
\|u\|_{E}=2\|D_{2}x^{*}\|_{E}\le4\|x\|_{E}$. Consequently, since $E$ has the 
$S_{q}$-property, combining this together with the preceding relations, we
get that $\left(z_{n}\right)\in E$ and $\left\Vert z\right\Vert
_{E}\leq4C\left\Vert x\right\Vert _{E}$, where $C$ is the $S_{q}$-property
constant. Moreover,  by the definition of $z$, we have $|y|\leq|z|$.
Thus, since $E$ is a lattice, $\left\Vert y\right\Vert _{E}\leq\left\Vert z\right\Vert _{E}\leq4C\left\Vert
x\right\Vert _{E}$. Summarizing all, we conclude that $E$ is a uniform ${
\mathcal{K}}$-monotone space with respect to the couple $\left(\ell^{0},
\ell^{q}\right).$

$(b)\;\Longrightarrow\;(a)$. Let now $E$ be a uniform ${\mathcal{K}}$
-monotone space with respect to the couple $\left(\ell^{0},\ell^{q}\right).$
Suppose that sequences $x=(x_{n})_{n=1}^{\infty}\in E$ and $
y=(y_{n})_{n=1}^{\infty}\in\ell^{q}$ satisfy conditions \eqref{extra15} and 
\eqref{extra16}. Then, the same argument as in the first part of the proof
yields 
\begin{equation*}
\sum_{n=m}^{\infty}\left(y_{n}^{\ast}\right)^{q}\leq\sum_{n=m}^{\infty}
\left(x_{n}^{\ast}\right)^{q},\;\;m\in\mathbb{N}. 
\end{equation*}
This inequality, combined with formula \eqref{EQ13ddd} and implication 
\eqref{impl1}, yields 
\begin{equation*}
{\mathcal{K}}(t,y;\ell^{0},\ell^{q})\le{\mathcal{K}}(t,x;\ell^{0},\ell^{q}),
\;t>0. 
\end{equation*}
Hence, by the assumption, $y\in E$ and $\left\Vert y\right\Vert _{E}\leq
C\left\Vert x\right\Vert _{E}$, where $C$ is the ${\mathcal{K}}$
-monotonicity constant of $E$ with respect to the couple $
\left(\ell^{0},\ell^{q}\right)$.

Since every uniform ${\mathcal{K}}$-monotone space with respect to a couple
of quasi-Banach spaces is also an interpolation space with respect to this
couple, from (a) it follows that $E\in Int\left(\ell^{0},\ell^{q}\right)$.

Finally, assume that $q\geq1$. Then, by Corollary \ref{Ar-Cw}, every
interpolation space between $\ell^{0}$ and $\ell^{q}$ is a uniform ${
\mathcal{K}}$-monotone space. This fact, combined together with already
proved implication $(b)\Longrightarrow(a)$, implies the last assertion of
the theorem.
\end{proof}

From Theorems~\ref{S_q-prop} and \ref{int properties} we get

\begin{corollary}
\label{S_q-prop1} Let $q\geq1$. A quasi-Banach sequence lattice $E$ has
the $S_{q}$-property if and only if $E\in Int\left(\ell^{p},\ell^{q}\right)$
for some $p>0.$
\end{corollary}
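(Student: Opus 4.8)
The plan is to obtain this corollary by directly combining Theorem~\ref{S_q-prop} with Theorem~\ref{int properties}, using Theorem~\ref{int spaces} once to push through one of the two implications; no genuinely new argument is needed. Since the $S_q$-property presupposes $E\subseteq\ell^q$, the exponent $p$ on the right-hand side is to be read as ranging over $(0,q)$, which is precisely condition (b) in the discussion following Theorem~\ref{int properties}.

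For the implication ``$E$ has the $S_q$-property $\Rightarrow$ $E\in Int(\ell^p,\ell^q)$ for some $p\in(0,q)$'', I would argue as follows. Because $q\ge1$, the last assertion of Theorem~\ref{S_q-prop} says that the $S_q$-property of $E$ is equivalent to $E\in Int(\ell^0,\ell^q)$. Then Theorem~\ref{int properties}, in the form $(i)\Rightarrow(ii)$, gives $E\in Int(\ell^p,\ell^q)$ for every $p\in(0,\sigma_E)$, where $\sigma_E$ is the Aoki--Rolewicz index; since $\sigma_E>0$ we may select such a $p$ which in addition satisfies $p<q$, which is exactly what is claimed.

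For the converse, assume $E\in Int(\ell^p,\ell^q)$ for some $p$ with $0<p<q$. Applying Theorem~\ref{int spaces} with $s=0$ and $r=\infty$ (so that $0=s<p<q<r=\infty$) yields
\begin{equation*}
Int(\ell^p,\ell^q)=Int(\ell^0,\ell^q)\cap Int(\ell^p,\ell^\infty)\subseteq Int(\ell^0,\ell^q),
\end{equation*}
so $E\in Int(\ell^0,\ell^q)$. Invoking the last assertion of Theorem~\ref{S_q-prop} once more (legitimate since $q\ge1$), we conclude that $E$ has the $S_q$-property, and the proof is complete.

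I do not expect any real obstacle: the whole content has already been established in Theorems~\ref{S_q-prop}, \ref{int properties} and \ref{int spaces}, and what remains is bookkeeping. The only point that calls for a little care is the range of $p$: one must keep $p<q$, both so that membership in $Int(\ell^p,\ell^q)$ automatically forces $E\subseteq\ell^q$ (matching the hypothesis of the $S_q$-property --- otherwise $\ell^r$ with $q<r<p$ would violate the ``if'' direction), and so that the Arazy--Cwikel identity of Theorem~\ref{int spaces} applies with $s=0<p<q$ and delivers the one-sided inclusion $Int(\ell^p,\ell^q)\subseteq Int(\ell^0,\ell^q)$ that drives the converse.
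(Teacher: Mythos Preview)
Your argument is correct and follows the same route as the paper, which simply records that the corollary is obtained ``from Theorems~\ref{S_q-prop} and \ref{int properties}''. Your only elaboration is to invoke Theorem~\ref{int spaces} explicitly for the inclusion $Int(\ell^{p},\ell^{q})\subseteq Int(\ell^{0},\ell^{q})$ in the converse direction; this step is implicit in the paper's citation (it is exactly what underlies the equivalence $(ii)\Leftrightarrow(iii)$ in Theorem~\ref{int properties}), so no genuinely new idea is introduced.
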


Recall now the following definition from \cite{CN17}. For every sequence $
x=(x_{n})_{n=1}^{\infty}$ and each $N\in\mathbb{N}$, let $
(x_{n}^{(N)})_{n=1}^{\infty}$ be the truncated sequence defined by $
x_{n}^{(N)}=x_{n}$ if $1\leq n\leq N$ and $x_{n}^{(N)}=0$ if $n>N$. We say
that a normed sequence lattice $E$ has the \textit{weak Fatou property} if
there is a constant $R$ such that, for every sequence $x=(x_{n})_{n=1}^{
\infty}$ of nonnegative numbers with $(x_{n}^{(N)})_{n=1}^{\infty}\in E$ for
all $N\in\mathbb{N}$ and $\sup_{N\in\mathbb{N}}\Vert(x_{n}^{(N)})\Vert_{E}<
\infty$, we have $x\in E$ and 
\begin{equation*}
\left\Vert x\right\Vert _{E}\leq R\sup_{N\in\mathbb{N}}\left\Vert
(x_{n}^{(N)})\right\Vert _{E}. 
\end{equation*}
Obviously, each normed sequence lattice with the Fatou property (see Section \ref{Prel-sequence-spaces}) has the weak Fatou property.

According to the main result of \cite{CN17}, if $q>1$, then every normed
sequence lattice $E$ with the weak Fatou property has the $S_{q}$-property if
and only $E$ is an interpolation space between $\ell^{1}$ and $\ell^{q}$.
Moreover, by interpolation, from the assumption $E\in
Int\left(\ell^{1},\ell^{q}\right)$ it follows that $E\in
Int\left(\ell^{p},\ell^{q}\right)$ for all $0\leq p<1$. Therefore, applying
Theorem~\ref{S_q-prop}, we get the following result, which in a sense
complements Corollary \ref{triv cor}.

\begin{corollary}
Let $q>1$ and let $E$ be a Banach sequence lattice with the weak Fatou
property. Then, the following conditions are equivalent:

(i) $E\in Int\left(\ell^{p},\ell^{q}\right)$ for all $p\in[0,1)$;

(ii) $E\in Int\left(\ell^{p},\ell^{q}\right)$ for some $p\in[0,1)$;

(iii) $E\in Int\left(\ell^{1},\ell^{q}\right)$;

(iv) $E$ has the $S_{q}$-property.
\end{corollary}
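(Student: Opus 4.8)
The plan is to prove the cycle of implications $(i)\Rightarrow(ii)\Rightarrow(iv)\Rightarrow(iii)\Rightarrow(i)$. All of the genuine analytic content has already been packaged into results available in the excerpt --- Theorem~\ref{int properties}, Theorem~\ref{S_q-prop}, Corollary~\ref{S_q-prop1}, and the main theorem of \cite{CN17} --- so what remains is a careful assembly of these statements, paying attention to the endpoint exponent $p=0$.

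The implication $(i)\Rightarrow(ii)$ is immediate. For $(ii)\Rightarrow(iv)$, suppose $E\in Int(\ell^{p},\ell^{q})$ for some $p\in[0,1)$. If $p>0$, then (since $q>1\ge 1$) Corollary~\ref{S_q-prop1} applies directly and shows that $E$ has the $S_{q}$-property. If $p=0$, then $E\in Int(\ell^{0},\ell^{q})$, so the equivalence $(i)\Leftrightarrow(ii)$ of Theorem~\ref{int properties} gives $E\in Int(\ell^{p'},\ell^{q})$ for every $p'\in(0,\sigma_{E})$; since $\sigma_{E}>0$ this interval is nonempty, so $E\in Int(\ell^{p'},\ell^{q})$ for at least one $p'>0$, and Corollary~\ref{S_q-prop1} again yields the $S_{q}$-property. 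Thus in all cases $(ii)$ implies $(iv)$.

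For $(iv)\Rightarrow(iii)$ we invoke the main result of \cite{CN17}: since $q>1$ and $E$ is a Banach (hence normed) sequence lattice with the weak Fatou property, the $S_{q}$-property of $E$ is equivalent to $E\in Int(\ell^{1},\ell^{q})$; in particular it implies $(iii)$. Finally, for $(iii)\Rightarrow(i)$ we use that $\ell^{1}$ is an interpolation space with respect to the couple $(\ell^{p},\ell^{q})$ whenever $0\le p<1<q$, which follows from the fact (recalled in Section~\ref{Prel-sequence-spaces}) that $\ell^{r}\in Int(\ell^{p},\ell^{q})$ for $p<r<q$. Indeed, given any $T\in\mathfrak{L}(\ell^{p},\ell^{q})$ with $0\le p<1$, the operator $T$ restricts to a bounded operator on $\ell^{1}$, so $T\in\mathfrak{L}(\ell^{1},\ell^{q})$; since $E\in Int(\ell^{1},\ell^{q})$, this forces $T\colon E\to E$ to be bounded. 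As moreover $\ell^{p}\subseteq\ell^{1}\subseteq E\subseteq\ell^{q}$, the space $E$ is intermediate between $\ell^{p}$ and $\ell^{q}$, and we conclude $E\in Int(\ell^{p},\ell^{q})$ for every $p\in[0,1)$, which is $(i)$.

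The only point requiring a little care --- and the closest thing to an obstacle --- is the endpoint $p=0$ in $(ii)\Rightarrow(iv)$: Corollary~\ref{S_q-prop1} speaks of some \emph{strictly positive} exponent, so one must first pass through Theorem~\ref{int properties} to manufacture such an exponent before feeding it into Corollary~\ref{S_q-prop1}. (Alternatively, one could note that a Banach lattice necessarily has $\sigma_{E}\ge 1$, so any $p\in(0,1)$ occurring in $(ii)$ already lies in $(0,\sigma_{E})$; but routing the argument through a positive exponent via Corollary~\ref{S_q-prop1} avoids having to pin down $\sigma_{E}$.) Everything else is a direct citation, so the proof should be short.
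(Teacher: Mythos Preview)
Your proof is correct and follows essentially the same route as the paper: the equivalence $(iii)\Leftrightarrow(iv)$ via the main result of \cite{CN17}, the implication $(iii)\Rightarrow(i)$ by interpolation (since $\ell^{1}\in Int(\ell^{p},\ell^{q})$ for $0\le p<1<q$), and closing the cycle through $(ii)\Rightarrow(iv)$ using the results of Section~6. The only minor difference is that for the endpoint $p=0$ in $(ii)\Rightarrow(iv)$ you detour through Theorem~\ref{int properties} to produce a positive exponent before invoking Corollary~\ref{S_q-prop1}, whereas the last assertion of Theorem~\ref{S_q-prop} already gives the $S_{q}$-property directly from $E\in Int(\ell^{0},\ell^{q})$ when $q\ge 1$; either way works.
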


\section{$\left(\ell^{p},\ell^{q}\right)$ is not a uniform 
Calder\'{o}n-Mityagin couple if $0\leq p<q<1$.}

It is a long-standing problem in the interpolation theory if a quasi-Banach couple with the Calder\'{o}n-Mityagin property possesses its uniform version as well (see, for instance, \cite[p. 1150]{KSM03}). In fact, by now this question is open when being restricted to the narrower classes of Banach couples or even of couples of Banach lattices.

In Section \ref{fails CM}, we proved that the couple $\left(\ell^{p},
\ell^{q}\right)$ does not have the Calder\'{o}n-Mityagin property whenever $
0\leq p<q<1$. As a consequence, we conclude that this couple fails to have
its uniform version. For the reader's convenience, we present here an
independent proof of the latter result, which is much shorter and simpler
than that of Theorem \ref{Th:No-CM-Property}.

\begin{theorem}
\label{Th:No-Bounded-CM-Property} The couple $\left(\ell^{p},\ell^{q}\right)$
, with $0\leq p<q<1$, does not have the uniform Calder\'{o}n-Mityagin property.
\end{theorem}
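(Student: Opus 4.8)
The plan is to exhibit, for each given interpolation constant bound, a pair of sequences $x,g \in \ell^q$ whose $\mathcal{K}$-functionals are comparable (so that $g$ lies in the $\mathcal{K}$-orbit of $x$ with controlled norm) but for which any operator $T \in \mathfrak{L}(\ell^p,\ell^q)$ with $Tx = g$ must have arbitrarily large quasi-norm; then a standard Baire-category / uniform-boundedness argument forces the uniform Calder\'{o}n-Mityagin constant to be infinite. Concretely, I would fix a sequence $g \in \ell^q \setminus \ell^p$ (for instance $g = (n^{-\sigma})_{n=1}^\infty$ with $\sigma \in (1/q,1/p)$, or $g = e_1$ in degenerate toy versions) and then, for a parameter $N$ to be sent to infinity, build a companion sequence $x = x^{(N)}$ which is a suitably ``stretched'' or ``spread out'' modification of $g$ — the same device underlying the operators $T_{a,b}$ of Lemma~\ref{Lemma_tab}, but now used in a single finite stage rather than as an infinite composition. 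The key quantitative fact I want is that $\mathcal{K}(t,g;\ell^p,\ell^q) \le \mathcal{K}(t,x^{(N)};\ell^p,\ell^q)$ for all $t>0$ (or at least with a constant independent of $N$), while the ratio $\mathcal{K}(t,x^{(N)};\ell^p,\ell^q)/\mathcal{K}(t,g;\ell^p,\ell^q)$ is forced to be large on some range of $t$ — precisely the kind of estimate produced by the Holmstedt formula \eqref{Holmsteds_formula4} together with the computations in Lemma~\ref{Lemma_tab}.

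The core obstruction argument runs as follows. Suppose $T \in \mathfrak{L}(\ell^p,\ell^q)$ satisfies $T x^{(N)} = g$. Since $0 \le p < q < 1$, Theorem~\ref{Th-bounded-lp} (the extrapolation theorem) provides a linear extension $R$ of $T$ to $\ell^1$ with $\|R\|_{\ell^1 \to \ell^1} \le \|T\|_{\ell^q \to \ell^q}$, and $R$ remains bounded on $\ell^p$. Hence $\mathcal{K}(t,g;\ell^p,\ell^1) = \mathcal{K}(t, R x^{(N)}; \ell^p,\ell^1) \le \|T\|_{\mathfrak{L}(\ell^p,\ell^q)} \,\mathcal{K}(t, x^{(N)}; \ell^p,\ell^1)$ for all $t>0$. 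Comparing this with the lower bound on $\mathcal{K}(t,g;\ell^p,\ell^1)$ relative to $\mathcal{K}(t,x^{(N)};\ell^p,\ell^1)$ — which must blow up with $N$ by the construction of $x^{(N)}$, using \eqref{Holmsteds_formula4} and Lemma~\ref{Lemma_tab} with $r=1$ — one concludes $\|T\|_{\mathfrak{L}(\ell^p,\ell^q)} \ge c_N \to \infty$. On the other hand, $\|g\|_{\mathcal{K}\text{-}\mathrm{Orb}(x^{(N)})} = \sup_{t>0} \mathcal{K}(t,g;\ell^p,\ell^q)/\mathcal{K}(t,x^{(N)};\ell^p,\ell^q)$ stays bounded (indeed $\le 1$ if the $\mathcal{K}$-functional inequality holds outright). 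Thus no single constant $C$ can serve in the definition of the uniform Calder\'{o}n-Mityagin property (Definition~\ref{def:CalderonMityaginCouple}), which is the assertion of the theorem.

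I expect the main obstacle to be the explicit construction of the sequences $x^{(N)}$ together with the verification of the two-sided $\mathcal{K}$-functional estimates: one needs $\mathcal{K}(\cdot,g;\ell^p,\ell^q) \le \mathcal{K}(\cdot, x^{(N)}; \ell^p,\ell^q)$ to hold globally in $t$ (the easy direction, typically arranged by making $x^{(N)}$ dominate $g$ in the relevant partial-sum sense, exactly as in \eqref{lemma_tab_2}), while simultaneously making the $(\ell^p,\ell^1)$-ratio deteriorate by a factor $c_N$. Balancing these — controlling the $\ell^q$-tail sums $\sum_{k \ge n} (x_k^*)^q$ from above relative to $g$'s tails, while blowing up the $\ell^p$-partial sums $\sum_{k \le n}(x_k^*)^p$ relative to $g$'s — is where the parameters $a_n, b_n$ of the $T_{a,b}$ machinery must be tuned; since this is the uniform (quantitative) statement, only one stretching step is needed and the estimates should be a substantially simplified version of those in the proof of Theorem~\ref{Th-main-estimate}. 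Once those estimates are in hand, the extrapolation-plus-Holmstedt deduction above is routine.
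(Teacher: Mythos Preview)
Your approach is correct in outline, but the paper's own proof is much shorter and takes a quite different route. The paper does \emph{not} use the $T_{a,b}$ machinery, does not use the extrapolation Theorem~\ref{Th-bounded-lp}, and does not pass through the couple $(\ell^{p},\ell^{1})$. Instead it takes the most elementary possible pair: $y=e_{1}$ and $x=\left(2C_{p,q}N^{-1/q}\right)\chi_{\{1,\dots,N\}}$ (a constant sequence of length $N$ with $\|x\|_{\ell^{q}}$ of fixed size). The required $\mathcal{K}$-functional inequality is then a one-line check via Holmstedt and the monotonicity of $\mathcal{K}$ in $t$. The lower bound on the operator norm comes from a direct duality argument: if $S\colon\ell^{q}\to\ell^{q}$ satisfies $Sx=e_{1}$, then the first-row functional $\Lambda(z)=\langle Sz,e_{1}\rangle$ is bounded on $\ell^{q}$, hence (since $q<1$) is given by a \emph{bounded} sequence $(\beta_{n})$ with $|\beta_{n}|\le\|S\|$; the identity $1=\Lambda(x)=2C_{p,q}N^{-1/q}\sum_{n=1}^{N}\beta_{n}$ immediately forces $\|S\|\ge(2C_{p,q})^{-1}N^{1/q-1}\to\infty$. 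This exploits the crucial structural fact $(\ell^{q})^{\ast}=\ell^{\infty}$ for $q<1$ far more directly than your extrapolation detour does.

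Your route---a single-stage truncation of the full Theorem~\ref{Th-main-estimate} construction, followed by extrapolation to $(\ell^{p},\ell^{1})$---would work, and it has the conceptual merit of making visible that the uniform statement is just a quantitative shadow of the full non-Calder\'{o}n--Mityagin theorem. But it is heavier machinery than needed. Two minor slips in your write-up: the sentence ``the ratio $\mathcal{K}(t,x^{(N)};\ell^{p},\ell^{q})/\mathcal{K}(t,g;\ell^{p},\ell^{q})$ is forced to be large'' is not what you actually use (you correctly switch to the $(\ell^{p},\ell^{1})$ ratio in the opposite direction a few lines later); and in the last paragraph you want the $\ell^{p}$-partial sums of $x^{(N)}$ to be \emph{small} relative to $g$'s, not large---that is what makes $\mathcal{K}(t,x^{(N)};\ell^{p},\ell^{1})$ small and hence forces $\|T\|$ to be large.
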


Let $\mathcal{P}_{p}$ and $\mathcal{Q}_{q}$ be the operators introduced in
Section \ref{Prel-sequence-spaces}. Taking into account the Holmstedt
formula \eqref{Holmsteds_formula4} if $p>0$ and relations \eqref{impl2} and 
\eqref{EQ13ddd} if $p=0$, one can easily see that Theorem \ref
{Th:No-Bounded-CM-Property} is a straightforward consequence of the
following proposition.

\begin{proposition}
\label{Prop_bounded_cm_property} Let $0\leq p<q<1.$ Then, given arbitrarily
large positive constant $C$ there exist two nonnegative, nonincreasing
sequences $x=\left(x_{n}\right)_{n=1}^{\infty}$ and $y=\left(y_{n}
\right)_{n=1}^{\infty}$ in $\ell^{q}$ satisfying the conditions 
\begin{equation}
(\mathcal{P}_{p}y)_{n}+n^{1/\alpha}(\mathcal{Q}_{q}y)_{n}\leq(\mathcal{P}
_{p}x)_{n}+n^{1/\alpha}(\mathcal{Q}_{q}x)_{n},\;\;n=1,2,\dots,\;\;\mbox{if}
\;p>0,  \label{equ3}
\end{equation}
where $1/\alpha=1/p-1/q$, and 
\begin{equation}
(\mathcal{Q}_{q}y)_{n}\leq(\mathcal{Q}_{q}x)_{n},\;\;n=1,2,\dots,\;\;
\mbox{if}\;p=0,  \label{equ4}
\end{equation}
such that for every linear operator $S:\,\ell^{q}\rightarrow\ell^{q}$ with $
Sx=y$ we have 
\begin{equation}
\left\Vert S\right\Vert _{\ell^{q}\to\ell^{q}}\geq C.
\label{Prop_bounded_cm_prop_norm_est}
\end{equation}
\end{proposition}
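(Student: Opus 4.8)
The plan is to produce, for each prescribed $C$, an extremely simple pair: a \emph{flat} source and a \emph{concentrated} target. Concretely, I would fix a small constant $\varepsilon_{0}=\varepsilon_{0}(p,q)\in(0,1]$ (to be pinned down below), a large integer $N=N(C)$, set $\beta:=\varepsilon_{0}N^{1/q}$, and define
\begin{equation*}
x:=\sum_{k=1}^{N}e_{k},\qquad y:=\beta e_{1}.
\end{equation*}
These are nonnegative, nonincreasing, finitely supported (so in $\ell^{q}$), and $y\neq x$ once $N\ge2$. Two things must then be verified: \textbf{(A)} that $x,y$ satisfy the domination hypothesis \eqref{equ3} (when $p>0$) resp.\ \eqref{equ4} (when $p=0$); and \textbf{(B)} that every linear $S\colon\ell^{q}\to\ell^{q}$ with $Sx=y$ obeys $\|S\|_{\ell^{q}\to\ell^{q}}\ge\beta/N=\varepsilon_{0}N^{(1-q)/q}$. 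Since $0<q<1$, the exponent $(1-q)/q$ is strictly positive, so choosing $N$ so large that $\varepsilon_{0}N^{(1-q)/q}\ge C$ yields \eqref{Prop_bounded_cm_prop_norm_est}, and the proposition follows.

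Step \textbf{(B)} is short and is the conceptual core. Let $S\colon\ell^{q}\to\ell^{q}$ be linear with $Sx=y$. Because $x$ is a \emph{finite} sum of basis vectors, linearity gives $\beta=y_{1}=(Sx)_{1}=\sum_{k=1}^{N}(Se_{k})_{1}$, whence
\begin{equation*}
\beta\ \le\ \sum_{k=1}^{N}\bigl|(Se_{k})_{1}\bigr|\ \le\ \sum_{k=1}^{N}\|Se_{k}\|_{\ell^{\infty}}\ \le\ \sum_{k=1}^{N}\|Se_{k}\|_{\ell^{q}}\ \le\ N\,\|S\|_{\ell^{q}\to\ell^{q}},
\end{equation*}
using $\|e_{k}\|_{\ell^{q}}=1$ in the last step (the inequality $\|Se_{k}\|_{\ell^{q}}\le\|S\|_{\ell^{q}\to\ell^{q}}$ is trivial; alternatively it is exactly the column formula in Lemma~\ref{Lemma_bounded_lp}$(i)$). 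Hence $\|S\|_{\ell^{q}\to\ell^{q}}\ge\beta/N$, which is precisely the estimate I want; if $\|S\|_{\ell^{q}\to\ell^{q}}=\infty$ there is nothing to prove.

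Step \textbf{(A)} is a short computation. If $p=0$: here $(\mathcal{Q}_{q}y)_{1}=\beta$ and $(\mathcal{Q}_{q}y)_{n}=0$ for $n\ge2$, while $(\mathcal{Q}_{q}x)_{1}=N^{1/q}$, so \eqref{equ4} reduces to $\beta\le N^{1/q}$, i.e.\ $\varepsilon_{0}\le1$. If $p>0$: since $y^{\ast}=(\beta,0,0,\dots)$ we get $(\mathcal{P}_{p}y)_{n}=\beta$ for all $n$ and $(\mathcal{Q}_{q}y)_{n}=\beta$ for $n=1$, $0$ otherwise, so the left side of \eqref{equ3} equals $2\beta$ at $n=1$ and $\beta$ for $n\ge2$. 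For the right side I would invoke the Holmstedt equivalence \eqref{Holmsteds_formula4} together with the fact that $t\mapsto\mathcal{K}(t,x;\ell^{p},\ell^{q})$ is non-decreasing: for every $n\ge2$,
\begin{equation*}
(\mathcal{P}_{p}x)_{n}+n^{1/\alpha}(\mathcal{Q}_{q}x)_{n}\ \ge\ \mathcal{K}\!\left(n^{1/\alpha},x;\ell^{p},\ell^{q}\right)\ \ge\ \mathcal{K}\!\left(2^{1/\alpha},x;\ell^{p},\ell^{q}\right)\ \ge\ \frac{1}{C_{p,q}}\,2^{1/\alpha}(N-1)^{1/q}\ \ge\ \frac{2^{1/\alpha}}{2C_{p,q}}\,N^{1/q},
\end{equation*}
the last step using $(N-1)^{1/q}\ge\tfrac12N^{1/q}$ for $N\ge2$; and at $n=1$ the right side of \eqref{equ3} is $1+N^{1/q}\ge N^{1/q}$. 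Consequently it suffices to take $\varepsilon_{0}:=\min\bigl\{\tfrac12,\ 2^{1/\alpha}/(2C_{p,q})\bigr\}$ (and $\varepsilon_{0}=1$ if $p=0$), and then $N:=\max\bigl\{2,\ \lceil(C/\varepsilon_{0})^{q/(1-q)}\rceil\bigr\}$.

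I do not expect a genuine technical obstacle here; the only thing that must be gotten right is that $\beta$ can be chosen as large as $\asymp N^{1/q}$ while the $\mathcal{K}$-domination \eqref{equ3}/\eqref{equ4} is still respected --- this is exactly what makes $\beta/N\to\infty$. And it is precisely this point at which the hypothesis $q<1$ is essential: for $q\ge1$ the same construction would only force $\|S\|_{\ell^{q}\to\ell^{q}}\gtrsim N^{1/q-1}\le1$, consistent with Corollary~\ref{Ar-Cw}.
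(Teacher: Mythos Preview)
Your argument is essentially the paper's own proof, up to a harmless rescaling: the paper takes $y=e_{1}$ and $x_{n}=2C_{p,q}N^{-1/q}$ for $1\le n\le N$, invokes the same Holmstedt-based almost-monotonicity (your Step~(A) is exactly the paper's display \eqref{monotonicity}), and obtains the norm lower bound in Step~(B) from the first-coordinate functional just as you do. One small arithmetic slip to fix: the inequality $(N-1)^{1/q}\ge\tfrac12 N^{1/q}$ fails for small $N$ when $q<1$ (take $N=2$, where the right side equals $2^{1/q-1}>1$); simply replace it by $(N-1)^{1/q}\ge (N/2)^{1/q}=2^{-1/q}N^{1/q}$ for $N\ge 2$ and absorb the extra factor $2^{-1/q}$ into~$\varepsilon_{0}$.
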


\begin{proof}
Taking for $y$ the element $e_{1}$ of the unit vector basis, we consider the
cases $p>0$ and $p=0$ separately.

(a) $p>0$. Since every ${\mathcal{K}}$-functional ${\mathcal{K}}
(t,x;X_{0},X_{1})$ is an increasing function in $t$, by 
\eqref{Holmsteds_formula4}, the sum $(\mathcal{P}_{p}z)_{n}+n^{1/\alpha}(
\mathcal{Q}_{q}z)_{n}$, for each $z\in\ell^{q}$, is almost increasing in $n$, i.e., 
\begin{equation}
(\mathcal{P}_{p}z)_{n}+n^{1/\alpha}(\mathcal{Q}_{q}z)_{n}\leq C_{p,q}\left((
\mathcal{P}_{p}z)_{m}+m^{1/\alpha}(\mathcal{Q}_{q}z)_{m}\right)\;\;\mbox{if}
\;n\le m,  \label{monotonicity}
\end{equation}
where $C_{p,q}\ge1$ depends only on $p$ and $q$.

Given any constant $C>0$, choose a positive integer $N$ so that 
\begin{equation}
(2C_{p,q})^{-1}N^{1/q-1}>C.  \label{choice}
\end{equation}
Next, we set $x=\left(x_{n}\right)_{n=1}^{\infty}$, where $
x_{n}=2C_{p,q}N^{-1/q}$ if $1\le n\le N$, and $x_{n}=0$ if $n>N$. Then, the first entry $(\mathcal{Q}_{q}x)_{1}$ of the sequence $\mathcal{Q}_{q}x$ is defined by
\begin{equation*}
(\mathcal{Q}_{q}x)_{1}=\Big(\sum_{n=1}^{N}x_{n}^{q}\Big)^{1/q}=2C_{p,q}. 
\end{equation*}
Therefore, by \eqref{monotonicity}, the right-hand side of inequality 
\eqref{equ3} is not less than $2$ for all $n\in\mathbb{N}$. On the other
hand, $(\mathcal{P}_{p}y)_{n}=1$, $n\in\mathbb{N}$, and $(\mathcal{Q}
_{q}y)_{1}=1$, $(\mathcal{Q}_{q}y)_{n}=0$, $n\ge2$. Hence, the left-hand side of \eqref{equ3} does not exceed $2$, and so for the above $x$ and $y$ inequality \eqref{equ3} holds.

Let now $S$ be a linear operator such that $S:\,\ell^{q}\rightarrow\ell^{q}$
with $Sx=y.$ Clearly, $S$ is defined by a sequence of bounded linear
functionals on $\ell^{q}$. In particular, setting $\Lambda\left(z\right):=
\left\langle Sz,e_{1}\right\rangle $, we have $\Lambda\left(x\right)=\left
\langle Sx,e_{1}\right\rangle =\left\langle y,e_{1}\right\rangle =1$ and 
\begin{equation*}
\left\vert \Lambda\left(z\right)\right\vert \leq\left\Vert S\right\Vert
\left\Vert z\right\Vert _{\ell^{q}}. 
\end{equation*}
Consequently, if $\beta_{n}:=\Lambda\left(e_{n}\right)$, $n\in\mathbb{N}$,
we have $\left\vert \beta_{n}\right\vert \leq\left\Vert S\right\Vert .$
Hence, 
\begin{equation*}
1=\left\vert \Lambda\left(x\right)\right\vert
=\Big\vert\sum_{j=1}^{N}\beta_{n}x_n\Big\vert\leq 2C_{p,q}\sum_{j=1}^{N}\beta_{n}N^{-1/q}\leq2C_{p,q}\left\Vert S\right\Vert
N^{1-1/q}. 
\end{equation*}
According to the choice of $N$ in \eqref{choice}, this implies that 
\begin{equation*}
\Vert S\Vert\geq(2C_{p,q})^{-1}N^{1/q-1}>C, 
\end{equation*}
and in this case the result follows.

(b) $p=0$. Given constant $C>0$, we take $N\in\mathbb{N}$ satisfying the inequality $N^{1/q-1}>C.$ Let $x=\left(x_{n}\right)_{n=1}^{\infty}$, where $
x_{n}=N^{-1/q}$ if $1\le n\le N$, and $x_{n}=0$ if $n>N$. As above, we have $
(\mathcal{Q}_{q}x)_{1}=1$. Therefore, since $(\mathcal{Q}_{q}y)_{1}=1$ and $(
\mathcal{Q}_{q}y)_{n}=0$ for all $n\ge2$, inequality \eqref{equ4} holds.

If $S$ is a linear operator such that $S:\,\ell^{q}\rightarrow\ell^{q}$ with 
$Sx=y$, the same reasoning as in the case (a) shows that $\|S\|\ge
N^{1/q-1}>C$, and so the proof is completed.
\end{proof}


\end{document}